\numberwithin{equation}{section}    
\newtheorem{thm}{Theorem}[section]
\newtheorem{lem}[thm]{Lemma}
\newtheorem{proposition}[thm]{Proposition}
\newtheorem{corollary}[thm]{Corollary}
\theoremstyle{definition}
\newtheorem{example}[thm]{Example}
\newtheorem{definition}[thm]{Definition}
\newtheorem{conj}[thm]{Conjecture}
\newtheorem{rmk}[thm]{Remark}
\newtheorem{problem}[thm]{Problem}
\newcommand*{\vertbar}{\rule[-1ex]{0.5pt}{2.5ex}}
\DeclareMathOperator{\RD}{RD}
\DeclareMathOperator{\I}{I}
\DeclareMathOperator{\Rec}{Rec}
\DeclareMathOperator{\Sub}{Sub}
\DeclareMathOperator{\rsum}{rsum}
\DeclareMathOperator{\csum}{csum}
\DeclareMathOperator{\LW}{LW}
\DeclareMathOperator{\bo}{bot}
\DeclareMathOperator{\piv}{piv}
\DeclareMathOperator{\area}{area}
\DeclareMathOperator{\grFrob}{grFrob}
\DeclareMathOperator{\Inv}{Inv}
\DeclareMathOperator{\cont}{cont}
\DeclareMathOperator{\adj}{adj}
\DeclareMathOperator{\iDes}{iDes}
\DeclareMathOperator{\Des}{Des}
\DeclareMathOperator{\std}{std}
\DeclareMathOperator{\maj}{maj}
\DeclareMathOperator{\arm}{arm}
\DeclareMathOperator{\leg}{leg}
\DeclareMathOperator{\coleg}{coleg}
\DeclareMathOperator{\coarm}{coarm}
\DeclareMathOperator{\inv}{inv}
\DeclareMathOperator{\dinv}{dinv}
\DeclareMathOperator{\st}{st}
\DeclareMathOperator{\stat}{stat}
\DeclareMathOperator{\cycling}{Cyc}
\DeclareMathOperator{\DR}{DR}
\DeclareMathOperator{\rev}{rev}
\DeclareMathOperator{\sgn}{sgn}
\DeclareMathOperator{\lex}{lex}
\DeclareMathOperator{\tSW}{\widetilde{SW}}
\DeclareMathOperator{\SW}{SW}
\DeclareMathOperator{\PP}{\mathbf{P}}
\title{Extending the Science Fiction and the Loehr--Warrington formula}
\author{Donghyun Kim}
\address{Department of Mathematical Sciences \\ Seoul National University \\Seoul 151-247 \\ Korea}
\email{hyun920310@snu.ac.kr}
\author{Jaeseong Oh}
\address{June E Huh Center for Mathematical Challenges \\ Korea Institute for Advanced Study \\ Seoul 02455 \\ Korea}
\email{jsoh@kias.re.kr}
\begin{document}

\begin{abstract}
We introduce the Macdonald piece polynomial \( \I_{\mu,\lambda,k}[X;q,t] \), which is a vast generalization of the Macdonald intersection polynomial in the science fiction conjecture by Bergeron and Garsia. We demonstrate a remarkable connection between \( \I_{\mu,\lambda,k} \), \( \nabla s_\lambda \), and the Loehr--Warrington formula \(\LW_{\lambda}\), thereby obtaining the Loehr--Warrington conjecture as a corollary. To connect \(\I_{\mu,\lambda,k}\) and \( \nabla s_\lambda \), we employ the plethystic formula for the Macdonald polynomials of Garsia--Haiman--Tesler, and to connect \(\I_{\mu,\lambda,k}\) and \(\LW_{\lambda}\), we use our new findings on the combinatorics of \(P\)-tableaux together with the column exchange rule. We also present an extension of the science fiction conjecture and the Macdonald positivity by exploiting \( \I_{\mu,\lambda,k} \).
\end{abstract}

\maketitle

\section{Introduction}\label{Sec: Intro}
In an effort to extend the science fiction conjecture by \cite{BG99}, we introduce the Macdonald piece polynomial \(\I_{\mu,\lambda,k}\), which possesses remarkable properties. To provide a comprehensive overview of the primary results, we discuss the historical context and background of Macdonald polynomials, the Loehr--Warrington conjecture, and the science fiction conjecture.

\subsection{Macdonald polynomials}
The \emph{Macdonald $P$ polynomials}, denoted as $P_\mu[X;q,t]$, are symmetric functions over $\mathbb{Q}(q,t)$ which form a basis for the ring of symmetric functions. Macdonald introduced them in 1988 \cite{Mac88} as a unified generalization of two significant families of symmetric functions, the \emph{Jack polynomials} and the \emph{Hall--Littlewood polynomials}. Since then, Macdonald polynomials have been established as fundamental objects with deep connections across various areas of mathematics. These include their appearance and relationships with affine Hecke algebras, quantum groups, elliptic Hall algebras, Hilbert scheme, affine Springer fiber, the Calogero–Sutherland model, diagonal coinvariants, and combinatorics.

An important question posed by Macdonald was the \emph{Macdonald positivity conjecture}. This conjecture asserts that the \emph{modified Macdonald polynomial}, defined through a plethystic substitution to the \emph{Macdonald $J$ polynomial} in \cite{Mac88},
\[
    \widetilde{H}_\mu[X;q,t] = t^{n(\mu)} J_\mu\left[\frac{X}{1 - t^{-1}}; q, t^{-1}\right],
\]
is Schur positive. To address this conjecture, Garsia and Haiman introduced the bigraded \(\mathfrak{S}_n\) module \( R_\mu \) in 1993, now called the \emph{Garsia--Haiman module}. This module is associated with a partition \(\mu \vdash n\) and is defined as the subspace of the polynomial ring \(\mathbb{C}[x_1, \dots, x_n, y_1, \dots, y_n]\) of $2n$ variables, spanned by the partial derivatives of the \emph{generalized Vandermonde determinant}
\[
    \Delta_\mu \coloneqq \det\left( x_k^{i-1} y_k^{j-1} \right)_{\substack{1 \le k \le n \\ (i,j) \in \mu}}
\]
associated with \(\mu\) \cite{GH93, GH96}.

In his groundbreaking work \cite{Hai01}, Haiman utilized the geometry of the Hilbert scheme of \( n \) points in the plane \( \mathbb{C}^2 \) to prove that
\begin{equation}\label{eq:Frob R_mu = H_mu}
    \mathrm{grFrob}\left( R_\mu; q, t \right) = \widetilde{H}_\mu[X; q, t],
\end{equation}
thereby settling the Macdonald positivity conjecture. Another consequence of Haiman's result is the \emph{\( n! \) theorem}, which states that
\begin{equation}\label{eq: n! theorem}
    \dim\left( R_\mu \right) = n!.
\end{equation}

Despite Haiman's resolution of Schur positivity, an explicit combinatorial formula for the Schur coefficients remains elusive.
\begin{problem}\label{prob: Macdonald-Kostka}
Find a combinatorial description for the Schur coefficient \(\widetilde{K}_{\lambda, \mu}(q, t)\) of the Macdonald polynomials:
\[
    \widetilde{H}_\mu[X;q,t] = \sum_{\lambda} \widetilde{K}_{\lambda, \mu}(q, t) s_\lambda,
\]
where \(s_\lambda\) is the Schur function.
\end{problem}
This open problem is one of the most fundamental and challenging questions in the theory of Macdonald polynomials and has been a driving force behind many major breakthroughs. One such advancement is the Haglund--Haiman--Loehr formula \cite{HHL05}, which provides an explicit monomial expansion for the Macdonald polynomials in terms of words and two statistics, \(\mathrm{inv}_\mu\) and \(\mathrm{maj}_\mu\) (see Theorem~\ref{thm: HHL formula}).

\subsection{Extending science fiction conjecture}\label{subsec: extending SF} 
One possible approach to resolving Problem~\ref{prob: Macdonald-Kostka} is to decompose \(\widetilde{H}_{\mu}\) into pieces that are Schur positive, and then find a combinatorial description of the Schur coefficients for each piece. The celebrated HHL formula breaks down \(\widetilde{H}_{\mu}\) into LLT polynomials, which are known to be Schur positive \cite{GH07}. However, describing the Schur coefficients of LLT polynomials remains a challenging open problem. Another attempt involved using \(k\)-Schur functions \cite{LLM03}, whose Schur coefficients were explicitly described in \cite{BMPS19}. However, it is still unclear whether \(\widetilde{H}_{\mu}\) can be expressed positively in terms of \(k\)-Schur functions. In this paper, we introduce an alternative way to decompose \(\widetilde{H}_{\mu}\), generalizing the way proposed in the science fiction conjecture.

Bergeron and Garsia studied the Garsia–-Haiman modules using inductive and combinatorial methods \cite{BG99}. They introduced a collection of conjectures referred to as the \emph{science fiction conjecture}, which include the existence of a certain basis for Garsia--Haiman modules and the elegant properties of those bases while intersecting Garsia--Haiman modules. To elaborate on the science fiction conjecture, let \(\mu\) be a partition with \(n\) (removable) corners. For \(1 \le i \le n\), let \(\mu^{(i)}\) denote the partition obtained from \(\mu\) by removing the \(i\)-th corner. The science fiction conjecture then implies the following two assertions, which are analogous to \eqref{eq:Frob R_mu = H_mu} and \eqref{eq: n! theorem}, respectively:

\begin{enumerate}
    \item The bigraded Frobenius characteristic of the intersection of the modules \(\bigcap_{i=1}^{k} R_{\mu^{(i)}}\) is given by
    \begin{equation}\label{eq: SF int R_mu=Mac int}
        \grFrob\left(\bigcap_{i=1}^{k} R_{\mu^{(i)}};q,t\right) = \I_{\mu^{(1)},\dots,\mu^{(k)}}[X;q,t]\coloneqq\sum_{i=1}^{k} \left(\prod_{j\neq i}\dfrac{T_{\mu^{(j)}}}{T_{\mu^{(j)}}-T_{\mu^{(i)}}}\right)\widetilde{H}_{\mu^{(i)}}[X;q,t],
    \end{equation}
    \item the dimension of the intersection \(\bigcap_{i=1}^{k} R_{\mu^{(i)}}\) is given by
    \begin{equation}\label{eq: n!/k conjecture}
        \dim\left(\bigcap_{i=1}^{k} R_{\mu^{(i)}}\right) = \frac{n!}{k}.
    \end{equation}
\end{enumerate}
Here, $T_\mu\coloneqq\prod_{(i,j)\in\mu}t^{i-1}q^{j-1}$. We refer to the symmetric function \(\I_{\mu^{(1)}, \dots, \mu^{(k)}}\) in \eqref{eq: SF int R_mu=Mac int} as the \emph{Macdonald intersection polynomial}, and the second part as the \emph{\(n!/k\) conjecture}. The Macdonald intersection polynomial naturally refines the usual Macdonald polynomials. As such, the combinatorics of Macdonald intersection polynomials may lead to progress on Problem~\ref{prob: Macdonald-Kostka}. The authors, along with Seung Jin Lee, recently provided an explicit positive monomial expansion for the Macdonald intersection polynomials for two partitions \cite{KLO22}, refining the celebrated HHL formula and providing a step toward Butler's conjecture \cite{But94}.

In this paper, we further generalize the science fiction conjecture. Especially, we generalize the Macdonald intersection polynomial in \eqref{eq: SF int R_mu=Mac int} and the $n!/k$ conjecture \eqref{eq: n!/k conjecture}. To be precise, like in the science fiction conjecture, let $\mu$ be a partition with $n$ removable corners $\{c_1,\dots,c_n\}$ and let $1\le k \le n-1$. For a subset $S \in \binom{[n]}{k}$ of $[n]$ of size $k$, let $\mu^{S}$ be the partition obtained from $\mu$ by deleting corners $\{c_i : i \in S\}$. Let $R_{n,k} := ((n-k)^k)$ be the rectangular partition, and let $\lambda$ be a partition inside $R_{n,k}$. We define the \emph{Macdonald piece polynomial} $\I_{\mu,\lambda,k}$ indexed by $\mu$, $k$, and $\lambda$ as
\begin{equation}\label{eq: def of I_mu,lambda,k}
    \I_{\mu,\lambda,k}[X;q,t] := (-1)^{|\lambda|}\sum_{S \in \binom{[n]}{k}} \dfrac{s_\lambda[z_S] \prod_{j \in S^c}{z_j}}{\prod_{\substack{i \in S \\ j \in S^c}} (z_j - z_i)} \widetilde{H}_{\mu^S}[X].    
\end{equation}
Here, we let $z_i = T_{\mu^{\{i\}}} / T_\mu = q^{-\coarm(c_i)} t^{-\coleg(c_i)}$ and $S^{c}=\{1,2,\,\dots,n\}\setminus S$. Additionally, $f[z_S]$ denotes the polynomial $f$ in the $z_i$'s where $i$ runs through $S$. For $k=1$ and the empty partition $\lambda=\emptyset$, $\I_{\mu,\lambda,k}$ recovers the Macdonald intersection polynomial $\I_{\mu^{(1)},\dots,\mu^{(n)}}$.

In Section~\ref{sec: preliminaries extending the SF}, using the novel symmetric function \( \I_{\mu,\lambda,k} \), we define another symmetric function \( \widetilde{H}_{\mu^S}^{\lambda} \) (See \eqref{eq: module intersection sym def}). We conjecture that it is the graded Frobenius characteristic of the intersection of certain Garsia--Haiman modules (see Conjecture~\ref{conj: sf extend}), largely generalizing the science fiction conjecture given in \eqref{eq: SF int R_mu=Mac int}. In the appendix~\ref{sec: Appendix}, as a counterpart for the \( n!/k \) conjecture \eqref{eq: n!/k conjecture}, numbers corresponding to each partition and their conjectural properties are outlined.

In addition to its crucial role in generalizing the science fiction conjecture, Macdonald piece polynomial \( \I_{\mu,\lambda,k} \) enjoys a remarkable property. We unveil a surprising connection with $\nabla s_{\lambda}$ and the Loehr--Warrington formula $\LW_{\lambda}$, where the Loehr--Warrington conjecture follows as a corollary. We first give a background of the Loehr--Warrington conjecture.


\subsection{The Loehr--Warrington conjecture}
The theory of Macdonald polynomials has seen significant advancement due to its deep connections with algebraic geometry, operators on symmetric functions, and Catalan combinatorics. A key result that exemplifies these connections is Haiman's \((n+1)^{n-1}\) theorem. By leveraging the geometric structure of the Hilbert scheme of \(n\) points in the plane \(\mathbb{C}^2\), Haiman proved that the dimension of the \emph{diagonal coinvariant ring},
\[
    \DR_n \coloneqq \mathbb{C}[x_1, \dots, x_n, y_1, \dots, y_n] / \langle \mathbb{C}[x_1, \dots, x_n, y_1, \dots, y_n]^{\mathfrak{S}_n}_+ \rangle,
\]
is \((n+1)^{n-1}\). Furthermore, he established that the bigraded Frobenius characteristic of this ring coincides with the symmetric function \(\nabla e_n\), where \(\nabla\) is the eigenoperator on Macdonald polynomials defined by \(\nabla \widetilde{H}_\mu = T_\mu \widetilde{H}_\mu\), and \(e_n\) is the elementary symmetric function.

The celebrated \emph{Shuffle Theorem}, conjectured in \cite{HHLRU05} and proved in \cite{CM18}, further underscores the interplay between Macdonald polynomials, operators on symmetric functions, and Catalan combinatorics. This theorem provides an explicit combinatorial expansion for \(\nabla e_n\) in terms of the combinatorics of labeled Dyck paths, showcasing the rich combinatorial structure. In an earlier work, the authors, along with Seung Jin Lee, offered a new proof of the shuffle theorem by connecting the Macdonald intersection polynomial $\I_{\mu^{(1)},\dots,\mu^{(n)}}$ to \(\nabla e_{n-1}\) \cite{KLO23}.

Over the past two decades, the shuffle theorem has been extended the rational shuffle theorem~\cite{Mel21} and the Delta theorem~\cite{DM22, BHMPS21Delta}. The Loehr--Warrington conjecture \cite{LW08}, the final outstanding conjecture in this series, was very recently proved in \cite{BHMPS21LW}, providing a combinatorial formula for \(\nabla s_\lambda\) in terms of \emph{nested Dyck paths}. We use another formulation expressing the combinatorial formula in the Loehr--Warrington conjecture in terms of certain \emph{$P$-tableaux} \cite{Ges84}:
\begin{equation*}
\LW_{\lambda}:=q^{\adj(\lambda)} \sum_{T\in \mathcal{T}(\lambda)}q^{\dinv(T)}t^{\area(T)}x^T.
\end{equation*}
See Section~\ref{subsec: LW formula} for precise definition for the set $\mathcal{T}(\lambda)$ of certain $P$-tableaux, and the statistics $\dinv$ and $\area$. It turns out the combinatorics of $P$-tableau beautifully match with the combinatorics of Macdonald piece polynomials.

\subsection{Main results}
Before stating our main results, we introduce a notation that we will use throughout this paper. For a partition $\lambda $ inside a rectangle $R(n,k)$, we define $\tilde{\lambda}$ to be a conjugate partition of $(n-k-\lambda_k,\dots,n-k-\lambda_1)$ where we regard $\lambda_i=0$ if $i>\ell(\lambda)$. The notation $\tilde{\lambda}$ is dependent on an ambient rectangle that $\lambda$ lives in, which will be obvious from the context. 

Now we state our main results. Note that the straightforward implication of Theorem \ref{thm: main theorem} (b) and (c) is the Loehr--Warrington conjecture (Corollary \ref{cor: LW thrm}).

\begin{thm}\label{thm: main theorem} Let $\mu$ be a partition with $n$ corners, and $1\le k<n$. Then for a partition $\lambda$ inside a rectangle $R(n,k)$ the symmetric function $\I_{\mu,\lambda,k}$ satisfies the following:
\begin{enumerate}[(a)]
    \item For $N>|\mu|-|\tilde{\lambda}|-k$, we have
    \begin{equation}\label{eq: vanishing identity}
        e_N^\perp \I_{\mu,\lambda,k} = 0.
    \end{equation}
    \item We have
    \begin{equation}\label{eq: e^perp I = nabla s_lambda}
        \dfrac{1}{T_{\mu^{[n]}}}e_{|\mu|-|\tilde{\lambda}|-k}^\perp \I_{\mu,\lambda,k} = \nabla s_{\tilde{\lambda}}.
    \end{equation}
    In particular, $e_{|\mu|+|\tilde{\lambda}|-k}^\perp \I_{\mu,\lambda,k}$ does not depend on the partition $\mu$ (up to a constant).
    \item We have
    \begin{equation}\label{eq: e^perp I = LW formula}
        \dfrac{1}{T_{\mu^{[n]}}}e_{|\mu|-|\tilde{\lambda}|-k}^\perp \I_{\mu,\lambda,k} = (-1)^{\adj(\tilde{\lambda})} \LW_{\tilde{\lambda}}.
    \end{equation}
\end{enumerate}
Here, $e_n^\perp$ denotes the adjoint operator to the multiplication by $e_n$ with respect to the Hall inner product. The precise definitions of the Loehr-Warrington formula $\LW_{\lambda}$ and $\adj(\lambda)$ will be given in Section 4.  
\end{thm}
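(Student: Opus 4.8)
The plan is to establish the three statements in a logically economical order: first prove the vanishing identity (a) and the explicit evaluation (b) together via the plethystic machinery of Garsia--Haiman--Tesler, and then derive (c) by a purely combinatorial argument matching the right-hand side of (b) against the $P$-tableau formula $\LW_{\tilde\lambda}$.

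For parts (a) and (b), the key observation is that $\I_{\mu,\lambda,k}$ is built from the modified Macdonald polynomials $\widetilde{H}_{\mu^S}$ with coefficients that are rational functions in the $z_i = T_{\mu^{\{i\}}}/T_\mu$, so the action of $e_N^\perp$ can be computed term by term using the Pieri-type / addition formulas for $\widetilde{H}$. I would substitute the Garsia--Haiman--Tesler plethystic expansion for each $\widetilde{H}_{\mu^S}$, which expresses it as a sum over standard-tableau-like fillings with explicit $q,t$-weights encoding the cells of $\mu^S$; applying $e_N^\perp$ then amounts to extracting the part of total degree dropped by $N$, which corresponds combinatorially to removing a vertical strip. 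The crucial point is that the rational prefactor $s_\lambda[z_S]\prod_{j\in S^c} z_j \big/ \prod_{i\in S, j\in S^c}(z_j - z_i)$ is exactly the $k$-th (skew) Schur-weighted analogue of the partial-fraction coefficients appearing in $\I_{\mu^{(1)},\dots,\mu^{(k)}}$, so the alternating sum over $S\in\binom{[n]}{k}$ enacts a finite-difference operator that kills all terms of sufficiently high degree --- this is where the bound $N > |\mu| - |\tilde\lambda| - k$ comes from. Once the vanishing is pinned down, the top surviving term $e_{|\mu|-|\tilde\lambda|-k}^\perp \I_{\mu,\lambda,k}$ collapses, after dividing by $T_{\mu^{[n]}}$, to a sum that no longer depends on $\mu$; recognizing this sum as the nabla-operator applied to $s_{\tilde\lambda}$ should follow from the eigenoperator property $\nabla \widetilde{H}_\nu = T_\nu \widetilde{H}_\nu$ together with the defining expansion of $\nabla s_{\tilde\lambda}$ in the $\widetilde{H}$ basis. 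I expect the bookkeeping of which cells of which $\mu^S$ survive, and checking that the degrees match $|\tilde\lambda|$ exactly, to be the main technical obstacle in this half; it will require careful tracking of $\coarm$ and $\coleg$ statistics of the corners $c_i$.

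For part (c), I would take the formula from (b) --- now known to equal $\nabla s_{\tilde\lambda}$ and in particular independent of $\mu$ --- and choose a convenient $\mu$ (for instance a staircase or a large rectangle) to make the combinatorics of $\I_{\mu,\lambda,k}$ as transparent as possible. On that side, after the $e^\perp$ operation, the HHL-type monomial expansion of the $\widetilde{H}_{\mu^S}$ (or the refinement from \cite{KLO22}) produces a signed sum over fillings; the task is to exhibit a weight-preserving, sign-reversing involution on the "bad" fillings so that the survivors are exactly the $P$-tableaux in $\mathcal{T}(\tilde\lambda)$ with $x^T q^{\dinv(T)} t^{\area(T)}$, up to the global sign $(-1)^{\adj(\tilde\lambda)}$ and the power $q^{\adj(\tilde\lambda)}$. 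This is precisely the place where the paper's "new findings on the combinatorics of $P$-tableaux together with the column exchange rule" enter: I would use the column exchange rule to transport fillings between the different $\mu^S$ summands, cancelling in pairs, and then identify the fixed points. The hard part will be constructing this involution and verifying that the statistics $\dinv$ and $\area$ on $P$-tableaux correspond correctly to $\dinv_\mu$/$\maj_\mu$-type statistics on the fillings after the perp operation --- in particular matching the adjustment term $\adj(\tilde\lambda)$, which should emerge from the reindexing $\lambda \mapsto \tilde\lambda$ relative to the ambient rectangle $R(n,k)$. Once (b) and (c) are both in hand, the Loehr--Warrington conjecture (Corollary~\ref{cor: LW thrm}) is immediate by equating the two right-hand sides.
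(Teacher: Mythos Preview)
Your outline for parts (a) and (b) is essentially the paper's approach, but you are missing the one precise lemma that does all the work. The ``finite-difference operator that kills all terms of sufficiently high degree'' is not a vague cancellation: it is the explicit identity
\[
\sum_{S \in \binom{[n]}{k}} \frac{s_\lambda[z_S]\, s_\mu[z_{S^c}]}{\prod_{i\in S, j\in S^c}(z_j - z_i)} = (-1)^{|\lambda|}\delta_{\tilde\lambda,\mu}
\qquad\text{for } |\mu|\le k(n-k)-|\lambda|,
\]
which is Lemma~\ref{lem: Schur orthogonal in rectangle} in the paper (proved via Jacobi's bialternant and Laplace expansion, or equivalently by torus localization on the Grassmannian). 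Once you have this, the GHT skewing formula (Lemma~\ref{lem: skewing Macdonald}) reduces $e^\perp_{N-k-\ell}\I_{\mu,\lambda,k}$ to a sum in which the $z$-dependence factors through $\rev(\Pi'_{\widetilde H_\nu}[D_{\mu^S}])$, a polynomial of degree $\ell$ in $z_{S^c}$; the orthogonality lemma then kills it for $\ell<|\tilde\lambda|$ and picks out exactly $s_{\tilde\lambda}[z_{S^c}]$ when $\ell=|\tilde\lambda|$, giving $\nabla s_{\tilde\lambda}$ after one more application of the same lemma. Your ``careful tracking of coarm and coleg statistics'' is not where the difficulty lies; the entire argument is two pages once this lemma is in hand.

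For part (c) your proposed route differs substantially from the paper's. You plan a direct sign-reversing involution on HHL-type fillings whose fixed points are the $P$-tableaux in $\mathcal T(\tilde\lambda)$. The paper does \emph{not} do this. Instead it (i) proves a Jacobi--Trudi determinantal formula $q^{-\adj(\lambda)}\LW_\lambda = \det(W(\lambda))\cdot 1$ in terms of noncommuting operators $\mathfrak h_m,\bar{\mathfrak h}_m,\hat{\mathfrak h}_m$ on a polynomial ring (Proposition~\ref{prop: JT formula}), (ii) deforms this determinant via operator identities of the form $(-q)^{n-1}\det V = \det T_{1,n}(V)$ (Lemma~\ref{lem: multiplication by q}) into a cleaner form $\det(W^{(s)})$, and only then (iii) matches $\det(W^{(s)})$ against the combinatorics of $\I_{\delta,\tilde\lambda,k}$ for the augmented staircase $\delta$, using the column exchange rule to reduce to a rectangular diagram $\Rec^{\le N}$ and Lemma~\ref{lem: Schur orthogonal in rectangle} once more to extract the correct monomial coefficients. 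The column exchange rule is used not to cancel fillings between different $\mu^S$ as you suggest, but to show that certain symmetric functions are symmetric in the $z_S$ and $z_{S^c}$ variables separately, which is what makes the orthogonality lemma applicable. A direct involution of the kind you sketch may exist, but the paper found the determinantal detour necessary, and your proposal gives no indication of how the $\adj(\tilde\lambda)$ sign and $q$-power would emerge bijectively.
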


\begin{corollary}\label{cor: LW thrm} (The Loehr--Warrington conjecture) 
    We have $\nabla s_{\lambda}=(-1)^{\adj(\lambda)}\LW_{\lambda}$.
\end{corollary}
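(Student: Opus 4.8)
The plan is to obtain the corollary as a purely formal consequence of parts (b) and (c) of Theorem~\ref{thm: main theorem}, the only point needing verification being that \emph{every} partition $\lambda$ occurs as $\tilde\rho$ for a suitable triple $(\mu,\rho,k)$.

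First I would observe that, for fixed $n$ and $1\le k<n$, the map $\rho\mapsto\tilde\rho$ is a bijection from the set of partitions contained in $R(n,k)=((n-k)^k)$ onto the set of partitions contained in the conjugate rectangle $(k^{n-k})$. Indeed, $(n-k-\rho_k,\dots,n-k-\rho_1)$ is exactly the complement of $\rho$ inside $R(n,k)$ read from the bottom row up, hence a partition with at most $k$ parts each of size at most $n-k$; conjugating it produces a partition with at most $n-k$ parts each of size at most $k$, and the whole construction is manifestly invertible. Consequently, given an arbitrary partition $\lambda$, choose any $k\ge\lambda_1$ and any $n$ with $n-k\ge\ell(\lambda)$ (so automatically $1\le k<n$); then there is a unique partition $\rho\subseteq R(n,k)$ with $\tilde\rho=\lambda$.

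Next I would fix a partition $\mu$ with exactly $n$ removable corners — for instance the staircase $\mu=(n,n-1,\dots,1)$, or any partition with $n$ distinct nonzero part sizes — so that $\I_{\mu,\rho,k}$ is defined; note $|\mu|-|\tilde\rho|-k\ge 0$ for this choice (already $|\mu|=\binom{n+1}{2}$ dominates $|\lambda|+k$), so the superscript on $e^\perp$ in \eqref{eq: e^perp I = nabla s_lambda} and \eqref{eq: e^perp I = LW formula} is the honest one. Applying Theorem~\ref{thm: main theorem}(b) to $(\mu,\rho,k)$ gives $\nabla s_\lambda=\nabla s_{\tilde\rho}=\tfrac{1}{T_{\mu^{[n]}}}e_{|\mu|-|\tilde\rho|-k}^\perp\I_{\mu,\rho,k}$, and applying Theorem~\ref{thm: main theorem}(c) to the same triple gives $\tfrac{1}{T_{\mu^{[n]}}}e_{|\mu|-|\tilde\rho|-k}^\perp\I_{\mu,\rho,k}=(-1)^{\adj(\tilde\rho)}\LW_{\tilde\rho}=(-1)^{\adj(\lambda)}\LW_\lambda$. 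Comparing the two right-hand sides yields $\nabla s_\lambda=(-1)^{\adj(\lambda)}\LW_\lambda$. (For $\lambda=\emptyset$ one may take $k=1,n=2,\rho=(1)$, recovering the trivial identity $\nabla s_\emptyset=1=\LW_\emptyset$.)

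Since the argument is purely formal, there is no genuine obstacle in this corollary; all of the difficulty resides in Theorem~\ref{thm: main theorem} itself — part (b), proved through the Garsia--Haiman--Tesler plethystic formula, and part (c), proved through the combinatorics of $P$-tableaux and the column exchange rule. The only item requiring even minor care is the rectangle-complementation claim that $\rho\mapsto\tilde\rho$ surjects onto all partitions once the ambient rectangle is allowed to vary, which is the elementary observation spelled out above.
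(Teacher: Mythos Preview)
Your argument is correct and is exactly the approach the paper takes: the corollary is stated immediately after Theorem~\ref{thm: main theorem} with the remark that it is ``the straightforward implication of Theorem~\ref{thm: main theorem} (b) and (c),'' and no further proof is given. Your additional care in verifying that every partition $\lambda$ arises as some $\tilde\rho$ (by choosing $k\ge\lambda_1$, $n-k\ge\ell(\lambda)$, and $\mu$ a staircase) simply makes explicit what the paper leaves implicit.
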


We conjecture that \( (-1)^{\adj(\tilde{\lambda})} \I_{\mu,\lambda,k} \) is Schur positive. We do not know the monomial positivity either, and Theorem \ref{thm: main theorem} (c) can be thought of as supporting evidence. Finding a combinatorial description of the monomial expansion for \(\I_{\mu,\lambda,k}\) is a challenging open problem, as one would need to discover an ambient formula that includes the Loehr--Warrington formula as a small portion.

We also study the specialization of the Macdonald piece polynomials at \(q=t=1\). In particular, we describe the \(h\)-expansion in terms of \(W_{\lambda,\mu} := [h_{\mu}](\nabla s_{\lambda}\vert_{q,t=1})\), the coefficient of \(h_{\mu}\) in the \(h\)-expansion of \(\nabla s_{\lambda}\vert_{q,t=1}\). In \cite{KLO23}, the authors proved that \(W_{(1^n),\mu}\) is given by the Kreweras number, but we do not yet have a combinatorial description of \(W_{\lambda,\mu}\) in general beyond the case \(\lambda = (1^n)\). Now, we state a vast generalization of \cite[Theorem 1.3]{KLO23}.

\begin{thm}\label{thm: q=t=1}
We have 
\begin{equation*}
    \I_{\mu,\lambda,k}[X;1,1]=\sum_{\nu \subseteq \lambda}(-1)^{|\lambda|-|\nu|}d^{(k)}_{\lambda,\nu}\sum_{\tau}W_{\tilde{\nu},\tau}h_{(\tau+1^{|\mu|-k-|\tilde{v}|})}
\end{equation*}
where 
\begin{equation*}
    d^{(k)}_{\lambda,\nu}=\det\left(\binom{\lambda_i+k-i}{\nu_j+k-j}\right)_{1\leq i,j\leq k}
\end{equation*}
and $\tilde{\nu}$ is taken regarding $\nu$ is inside a rectangle $R(n,k)$. 
\end{thm}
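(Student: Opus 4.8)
\textbf{Proof proposal for Theorem~\ref{thm: q=t=1}.}

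The plan is to specialize the defining identity \eqref{eq: def of I_mu,lambda,k} at $q=t=1$, which forces $z_i=1$ for every corner $c_i$, and to understand the resulting singular-looking sum by a limiting/residue argument. When $q=t=1$ all the $z_i$ collapse to $1$, so the summand $\dfrac{s_\lambda[z_S]\prod_{j\in S^c}z_j}{\prod_{i\in S,\,j\in S^c}(z_j-z_i)}\widetilde H_{\mu^S}$ is genuinely $0/0$; the correct interpretation is to keep $q,t$ generic, use the known $q=t=1$ evaluation $\widetilde H_{\mu^S}[X;1,1]=h_1^{|\mu|-k}$ — no wait, more precisely $\widetilde H_{\nu}[X;1,1]=e_1^{|\nu|}=h_1^{|\nu|}$ is a poor record; instead I would use $\widetilde H_{\nu}[X;1,1] = \sum_\tau (\dots) h_\tau$ carefully, but the cleaner route is: first prove the theorem after applying $e_N^\perp$ for all $N$ (equivalently, track the full symmetric function through its images under the $e_N^\perp$), then reassemble. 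Concretely, Theorem~\ref{thm: main theorem}(a),(b) already pin down $e_N^\perp \I_{\mu,\lambda,k}$ for the top relevant $N$; the idea is to establish the analogous statement for every $N$ by the same mechanism, namely that $e_N^\perp \I_{\mu,\lambda,k}$ is (up to the constant $T_{\mu^{[n]}}$) a $\nabla$-image of a sum of Schur functions governed by the Jacobi--Trudi-type determinant $d^{(k)}_{\lambda,\nu}$.

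First I would record the elementary combinatorial identity that expands $s_\lambda$ in a ``staircase shifted'' basis adapted to the rectangle $R(n,k)$: using the dual Jacobi--Trudi / Lindström--Gessel--Viennot interpretation, $s_\lambda[z_S]$ (a polynomial in $k$ variables $z_i$, $i\in S$) can be written as $\sum_{\nu\subseteq\lambda} d^{(k)}_{\lambda,\nu}\, m_\nu$-type expansion, where $d^{(k)}_{\lambda,\nu}=\det\binom{\lambda_i+k-i}{\nu_j+k-j}$ arises precisely as the transition matrix between $\{s_\lambda\}$ and the basis of ``falling factorial Schur functions'' or, equivalently, from expanding the Schur polynomial around $z_i=1$ (set $z_i = 1+w_i$ and extract the lowest-degree part in the $w_i$, which is what survives the $q,t\to1$ limit after the denominator cancellation). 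This is the source of both the sum over $\nu\subseteq\lambda$ and the sign $(-1)^{|\lambda|-|\nu|}$.

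Next, for each fixed $\nu\subseteq\lambda$, I would apply Theorem~\ref{thm: main theorem}(b) to $\I_{\mu,\nu,k}$ rather than to $\I_{\mu,\lambda,k}$: the point is that the ``$\nu$-piece'' of the $q,t\to1$ degeneration of $\I_{\mu,\lambda,k}$ is, up to the scalar $d^{(k)}_{\lambda,\nu}$ and sign, the $q=t=1$ specialization of $\dfrac{1}{T_{\mu^{[n]}}}\nabla s_{\tilde\nu}$ together with the extra $h_1$-padding $h_{(\tau+1^{|\mu|-k-|\tilde\nu|})}$ coming from the fact that $\I_{\mu,\nu,k}$ itself is not $e^\perp$-reduced — the operators $e_N^\perp$ strip off exactly $|\mu|-k-|\tilde\nu|$ boxes, and running this backwards at $q=t=1$ reintroduces that many $h_1$ factors. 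Combining the definition $W_{\tilde\nu,\tau}=[h_\tau](\nabla s_{\tilde\nu}\vert_{q,t=1})$ with the multiplicativity $[h_\sigma](f\cdot h_1^m)$ bookkeeping yields the stated $\sum_\tau W_{\tilde\nu,\tau} h_{(\tau+1^{|\mu|-k-|\tilde\nu|})}$. Summing over $\nu\subseteq\lambda$ against $(-1)^{|\lambda|-|\nu|}d^{(k)}_{\lambda,\nu}$ finishes it.

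The main obstacle I anticipate is the careful handling of the $0/0$ limit in \eqref{eq: def of I_mu,lambda,k}: one must show that the limit $q,t\to1$ of $\dfrac{s_\lambda[z_S]\prod_{j\in S^c}z_j}{\prod(z_j-z_i)}$ is well-defined and exactly extracts the $d^{(k)}_{\lambda,\nu}$ coefficients, with no contribution from intermediate-order terms, and that one may interchange this limit with the $e_N^\perp$ operators and with the sum over $S$. I would make this rigorous by keeping $z_i$ formal, writing $z_i = 1 - \epsilon_i$ with the $\epsilon_i$ algebraically independent, expanding everything as a Laurent/Taylor series in the $\epsilon_i$, and checking that the constant term in the $\epsilon$'s reproduces the right-hand side — the denominator $\prod(z_j-z_i)=\prod(\epsilon_i-\epsilon_j)$ has the Vandermonde structure that makes the surviving numerator term a Schur-to-monomial transition, which is where $d^{(k)}_{\lambda,\nu}$ enters as a determinant of binomial coefficients. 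A secondary technical point is matching the $h_1$-padding exponent $|\mu|-k-|\tilde\nu|$ consistently with the convention $\tilde\nu$ taken inside $R(n,k)$ (not inside a rectangle depending on $\nu$), which requires the identity $|\tilde\lambda| - |\tilde\nu|$ having the right sign so that all partitions $\tau+1^{|\mu|-k-|\tilde\nu|}$ that appear are genuine partitions; this is a bookkeeping check using $\nu\subseteq\lambda \Rightarrow |\tilde\nu|\le|\tilde\lambda|$ together with the hypothesis $1\le k<n$.
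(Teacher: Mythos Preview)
Your proposal is on the right track in two respects: the coefficients $d^{(k)}_{\lambda,\nu}$ do arise exactly from expanding $s_\lambda$ in the ``shifted'' basis around $z_i=1$ (the paper makes this precise by introducing $s^{\flat}_\lambda(z_S):=s_\lambda(z_{i_1}-1,\dots,z_{i_k}-1)$ and the auxiliary object $\bar\I_{\mu,\lambda,k}$ built from it), and Theorem~\ref{thm: main theorem}(a),(b) are indeed the engine that identifies the top layer of $h$-coefficients with the $W_{\tilde\nu,\tau}$.

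The genuine gap is your step ``running $e^\perp$ backwards at $q=t=1$ reintroduces that many $h_1$ factors.'' Knowing $e_N^\perp f=0$ for $N>N_0$ and $e_{N_0}^\perp f = g$ only pins down $[h_\tau]f$ for partitions $\tau$ with $\ell(\tau)\ge N_0$; it says nothing about $[h_\tau]f$ when $\ell(\tau)<N_0$. You cannot invert $e_{N_0}^\perp$ to a unique preimage, and there is no reason a priori that the answer should be $h_1^{N_0}\cdot g$. The paper closes this gap with an independent argument: set $t=1$ first and use the factorization (from \cite{GHQR19})
\[
\widetilde H_{\mu^S}[X;q,1]=\sum_{\tau}(q-1)^{\,|\mu|-k-\ell(\tau)}A_{\mu^S,\tau}(q)\,h_\tau,
\]
together with the observation that the shifted numerator $s^{\flat}_\lambda(z_S)\prod_{j\in S^c}z_j$ divided by $\prod_{i\in S,\,j\in S^c}(z_j-z_i)$ has a pole at $q=1$ of order exactly $k(n-k)-|\lambda|=|\tilde\lambda|$. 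Hence each $[h_\tau]$-contribution carries a net factor $(q-1)^{\,|\mu|-k-\ell(\tau)-|\tilde\lambda|}$, which vanishes at $q=1$ precisely when $\ell(\tau)<|\mu|-k-|\tilde\lambda|$. This pole-counting step is what forces all the missing $h$-coefficients to be zero and is absent from your outline.

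Two smaller points. First, your suggested $\epsilon$-expansion with \emph{algebraically independent} $\epsilon_i$ does not model the actual limit: the $z_i$ are specific monomials in $q,t$, so the degeneration is along a one- (or two-) parameter family, and it is exactly this structure (together with the $(q-1)$-powers above) that makes the limit computable. Second, $\nu\subseteq\lambda$ implies $|\tilde\nu|\ge|\tilde\lambda|$, not $\le$; this reversal is what makes Theorem~\ref{thm: main theorem}(a) kill all the $\nu\subsetneq\lambda$ terms when you hit with $e_{|\mu|-k-|\tilde\lambda|}^\perp$.
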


\begin{example}
    Let $\mu=(4,3,2,1)$, $k=2$, $n=4$ and $\lambda=(2,1)$. Then we have
    \begin{equation*}
        d^{(2)}_{(2,1),(2,1)}=1,\quad d^{(2)}_{(2,1),(1,1)}=3,\quad d^{(2)}_{(2,1),(2)}=1, \quad d^{(2)}_{(2,1),(1)}=3, \quad \text{and}\quad d^{(2)}_{(2,1),()}=2.
    \end{equation*}
    From
    \begin{align*}
        &\nabla s_{(2,2)}\vert_{q,t=1}=-h_{(1,1,1,1)}+h_{(2,1,1)}+h_{(2,2)}-h_{(3,1)
        }, \quad \nabla s_{(2,1)}\vert_{q,t=1}=-3h_{(1,1,1)}+4h_{(2,1)}-h_{(3)}, \\
         &\nabla s_{(1,1)}\vert_{q,t=1}=2h_{(1,1)}-h_{(2)},\quad \nabla s_{(2)}\vert_{q,t=1}=-h_{(1,1)}+h_{(2)}, \quad \text{and} \quad  \nabla s_{(1)}\vert_{q,t=1}=h_{(1)},
        \end{align*}
      we obtain
      \begin{align*}
         \I_{\mu,\lambda,k}[X;1,1]=&(h_{(2,1,1,1,1,1,1)})-3(-h_{(2,2,1,1,1,1)}+h_{(3,1,1,1,1,1)}))-(2h_{(2,2,1,1,1,1)}-h_{(3,1,1,1,1,1)})\\
         &+3(-3h_{(2,2,2,1,1)}+4h_{(3,2,1,1,1)}-h_{(4,1,1,1,1,1)})-2(-h_{(2,2,2,2)}+h_{(3,2,2,1)}+h_{(3,3,1,1)}-h_{(4,2,1,1)
        })\\
        =&h_{(2,1,1,1,1,1,1)}+h_{(2,2,1,1,1,1)}-2h_{(3,1,1,1,1,1)}-9h_{(2,2,2,1,1)}+12h_{(3,2,1,1,1)}-3h_{(4,1,1,1,1,1)}\\
        &+2h_{(2,2,2,2)}-2h_{(3,2,2,1)}-2h_{(3,3,1,1)}+2h_{(4,2,1,1)
        }.
      \end{align*}
\end{example}

\subsection{organization} This paper is organized as follows. In Section \ref{sec: preliminaries extending the SF}, we introduce the necessary concepts in symmetric function theory and then present the extension of the science fiction conjecture (Conjecture \ref{conj: sf extend}), thereby motivating the importance of the Macdonald piece polynomial. In Section \ref{sec3: GHT}, we establish Theorem \ref{thm: main theorem} (a) and (b) utilizing the plethystic formula for Macdonald polynomials by Garsia--Tesler--Haiman. In Section \ref{sec: LW formula}, we recall the Loehr--Warrington formula given in \cite{LW08} and present its deformation in terms of the operators. In Section \ref{sec5: column exchange filled diagram}, we explore generalized Macdonald polynomials for filled diagrams and the column exchange rule introduced in \cite{KLO22}. In Section \ref{sec: deform}, we finalize the proof of Theorem \ref{thm: main theorem} by showing that the combinatorics of the Macdonald piece polynomial elegantly matches with the Loehr--Warrington formula. In Section \ref{sec: q=t=1}, we prove Theorem \ref{thm: q=t=1} on the $h$-expansion of Macdonald piece polynomials at $q=t=1$. In Section \ref{sec: future questions}, we discuss some future questions arising from this work. Finally, Appendix~\ref{Sec: Appendix} includes data with interesting numbers, ratios of dimensions.

\section*{acknowledgement}
The authors are grateful to François Bergeron, James Haglund, Seung Jin Lee and Brendon Rhoades for helpful conversations. The authors
also appreciate to Raymond Chou for sharing his SAGE code for computing Garsia--Haiman modules. D. Kim was supported by NRF grants partially supported by Science Research Center Program through the National Research Foundation of Korea(NRF) Grant funded by the Korean Government and NRF grants 2022R1I1A1A01070620. J. Oh was supported by KIAS Individual Grant (HP083401) at  June E Huh Center for Mathematical Challenges in Korea Institute for Advanced Study.

\section{Extending the Science Fiction conjecture with Macdonald piece polynomials}
\label{sec: preliminaries extending the SF}
\subsection{Preliminaries}
 A partition $\mu=(\mu_1,\dots,\mu_\ell)$ is a weakly decreasing sequence of positive integers. We say that \(\mu\) is a partition of \(n\), denoted by \(\mu \vdash n\), if \(|\mu| := \sum_{i} \mu_i = n\). We denote the length of \(\mu\), i.e., the number of its parts, by \(\ell(\mu)\). We use \emph{Young diagrams} in French notation to display a partition as
\[
    \mu=\{(i,j)\in \mathbb{Z}_{\geq 1}\times \mathbb{Z}_{\geq 1} : j\le \mu_i\},
\]
where each element of this set represents a \emph{cell} in the diagram. The \emph{conjugate partition} of $\mu$, denoted by $\mu'=(\mu'_1,\mu'_2,\dots)$, is obtained by reflecting the Young diagram of $\mu$ along the diagonal $y=x$. We denote the empty partition by $\emptyset$.

For a cell $u$ in a partition $\mu$, we define the \emph{arm} (\emph{coarm}) of $u$, denoted by $\arm_\mu(u)$ ($\coarm_\mu(u)$), as the number of cells strictly to the right (left) of $u$ in the same row. Similarly, we define the \emph{leg} (\emph{coleg}) of $u$, denoted by $\leg_\mu(u)$ ($\coleg_\mu(u)$), as the number of cells strictly above (below) $u$ in the same column. 

A \emph{diagram} is a collection of cells in the first quadrant. A partition can be considered as a left-justified diagram, where the number of cells in each row is weakly decreasing from bottom to top. We denote a cell in a diagram in the $i$-th row, and $j$-th column by $(i,j)$. We usually represent a diagram by
\[
    D = [D^{(1)}, D^{(2)},\dots],
\]
where $D^{(j)}=\{i: (i,j)\in D\}$ is the row numbers of the $j$-th column of $D$.
For a diagram \( D \), the \emph{size} $|D|$ is defined as the number of cells in \( D \).

Let $\mathbf{Sym}$ be the graded ring of symmetric functions in infinite variables $X=x_1,x_2,\dots$ over the ground field $\mathbb{C}(q,t)$. We use the notations from \cite{Mac88} for families of symmetric functions indexed by a partition: $m_\lambda$ for the \emph{monomial symmetric function}, $h_\lambda$ for the \emph{homogeneous symmetric function}, $e_\lambda$ for the \emph{elementary symmetric function}, $p_\lambda$ for the \emph{power sum symmetric function}, $s_\lambda$ for the \emph{Schur function},  $\widetilde{H}_\lambda$ for the \emph{(modified) Macdonald polynomial}. The (Hall) inner product on symmetric functions, denoted by $\langle -,-\rangle$, is defined as \( \langle s_\lambda, s_\mu \rangle = \delta_{\lambda,\mu}.\)

\subsection{Extending the science fiction conjecture and the Macdonald positivity}
We fix a partition $\mu$ with $n$ corners $\{c_1,c_2,\dots,c_n\}$ and let $\mu^{(i)}$ be a partition obtained by removing a corner $c_i$. Recall that the science fiction conjecture suggests that
\begin{equation}\label{eq: sf conjecture}
    \I_{\mu^{(1)},\dots,\mu^{(n)}}[X;q,t]:=\sum_{i=1}^{n}\left(\prod_{j\neq i}\dfrac{z_j}{z_j-z_i}\right) \widetilde{H}_{\mu^{(i)}}=\grFrob\left(\cap_{i=1}^{n}R_{\mu^{(i)}}\right),
\end{equation}
where $z_i=T_{\mu^{(i)}}/T_{\mu}$.
As mentioned before, $\I_{\mu^{(1)},\dots,\mu^{(n)}}[X;q,t]$ is a special case of the Macdonald piece polynomial $\I_{\mu,\lambda,k}$ obtained by letting $\lambda=\emptyset$ and $k=1$. Our goal is to further generalize \eqref{eq: sf conjecture} and refine the Macdonald positivity as well (Conjecture \ref{conj: sf extend}).

Now we consider partitions $\mu^{S}=\mu\setminus \{c_i: i\in S\}$ for $S\in \binom{[n]}{k}$. Note that $\I_{\mu,\lambda,k}$ can be regarded as a linear combination of $\widetilde{H}_{\mu^{S}}$. We present the converse by representing each $\widetilde{H}_{\mu^{S}}$ as a linear combination of $\I_{\mu,\lambda,k}$.

\begin{lem}\label{lem: H in terms of I}
We have 
\begin{equation*}
    \widetilde{H}_{\mu^{S}}=\sum_{\lambda\subseteq R(n,k)} \frac{s_{\tilde{\lambda}}[z_{S^{c}}]}{\prod_{j\in S^{c}}z_i}\I_{\mu,\lambda,k}
\end{equation*}
where $z_i = T_{\mu_{\{i\}}} / T_\mu = q^{-\coarm(c_i)} t^{-\coleg(c_i)}$.
\end{lem}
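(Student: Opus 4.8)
The plan is to produce explicit mutually-inverse transition matrices between $\{\widetilde H_{\mu^S}\}_{S\in\binom{[n]}{k}}$ and $\{\I_{\mu,\lambda,k}\}_{\lambda\subseteq R(n,k)}$; in fact only one of the two matrix identities is needed, and linear independence of the $\widetilde H_{\mu^S}$ is never used. Write the defining formula \eqref{eq: def of I_mu,lambda,k} as $\I_{\mu,\lambda,k}=\sum_{S}d_{\lambda,S}\,\widetilde H_{\mu^S}$ with $d_{\lambda,S}=(-1)^{|\lambda|}s_\lambda[z_S]\prod_{j\in S^c}z_j\big/\prod_{i\in S,\,j\in S^c}(z_j-z_i)$, and let $c_{S,\lambda}:=s_{\tilde\lambda}[z_{S^c}]\big/\prod_{j\in S^c}z_j$ be the coefficients claimed by the lemma. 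Substituting and interchanging the (finite) sums gives $\sum_{\lambda}c_{S,\lambda}\I_{\mu,\lambda,k}=\sum_{T}\bigl(\sum_{\lambda}c_{S,\lambda}d_{\lambda,T}\bigr)\widetilde H_{\mu^T}$, so it suffices to prove
\begin{equation}\label{eq:Ilem-matrix}
\sum_{\lambda\subseteq R(n,k)}c_{S,\lambda}\,d_{\lambda,T}=\delta_{S,T}\qquad\text{for all }S,T\in\binom{[n]}{k}.
\end{equation}

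After clearing the scalar prefactors (which are nonzero, since distinct corners of $\mu$ have distinct $(\coarm,\coleg)$, so the $z_i$ are nonzero and pairwise distinct as Laurent monomials in $q,t$), \eqref{eq:Ilem-matrix} reduces to the single identity
\begin{equation}\label{eq:Ilem-goal}
\sum_{\lambda\subseteq R(n,k)}(-1)^{|\lambda|}\,s_{\tilde\lambda}[z_{S^c}]\,s_\lambda[z_T]=\prod_{i\in T}\prod_{j\in S^c}(z_j-z_i).
\end{equation}
Indeed, granting \eqref{eq:Ilem-goal}, when $S=T$ the right side is $\prod_{i\in S,\,j\in S^c}(z_j-z_i)$ and the prefactors telescope to give $1$; when $S\ne T$ the set $T\cap S^c$ is nonempty (as $|S|=|T|=k$), so the product on the right contains a factor $z_i-z_i$ and the sum vanishes. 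Both sides of \eqref{eq:Ilem-goal} depend on $z_{S^c}$ and $z_T$ only through the lists of values substituted, so \eqref{eq:Ilem-goal} follows from the polynomial identity
\begin{equation}\label{eq:Ilem-dualcauchy}
\sum_{\lambda\subseteq R(n,k)}(-1)^{|\lambda|}\,s_{\tilde\lambda}(y_1,\dots,y_{n-k})\,s_\lambda(x_1,\dots,x_k)=\prod_{i=1}^{k}\prod_{j=1}^{n-k}(y_j-x_i)
\end{equation}
in independent indeterminates (recall $R(n,k)=((n-k)^k)$).

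To prove \eqref{eq:Ilem-dualcauchy} I would begin from the dual Cauchy identity $\sum_{\lambda\subseteq(q^p)}s_\lambda(x)s_{\lambda'}(y)=\prod_{i=1}^{p}\prod_{j=1}^{q}(1+x_iy_j)$ with $p=k$ and $q=n-k$, substitute $y_j\mapsto-1/y_j$ (picking up the sign $(-1)^{|\lambda'|}=(-1)^{|\lambda|}$ and turning the right side into $\prod(1-x_i/y_j)$), and rescale via the rectangular-complement identity $s_{\lambda'}(1/y_1,\dots,1/y_{n-k})=(y_1\cdots y_{n-k})^{-k}\,s_{(\lambda')^{\vee}}(y)$, where $(\lambda')^{\vee}$ is the complement of $\lambda'\subseteq(k^{\,n-k})$ in that rectangle, read in reverse. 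The powers of $y_1\cdots y_{n-k}$ cancel against those produced by $\prod(y_j-x_i)=\prod_j y_j^{\,k}\prod(1-x_i/y_j)$, and a short computation with conjugate partitions identifies $(\lambda')^{\vee}$ with $\tilde\lambda$: from $\tilde\lambda=(\lambda^c)'$ with $\lambda^c=(n-k-\lambda_k,\dots,n-k-\lambda_1)$ one verifies $\bigl((\lambda')^{\vee}\bigr)'_j=(n-k)-\lambda_{k+1-j}=\lambda^c_j$.

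The only non-formal input is the classical dual Cauchy identity; everything else is bookkeeping. The step most prone to slips is the identification $(\lambda')^{\vee}=\tilde\lambda$ together with the attendant sign, because the definition of $\tilde\lambda$ interleaves conjugation with rectangular complementation in a specific order; I would therefore nail the conventions down on a small case — for instance $n=3$, $k=1$, where \eqref{eq:Ilem-dualcauchy} reads $y_1y_2-(y_1+y_2)x_1+x_1^2=(y_1-x_1)(y_2-x_1)$ — before writing out the general manipulation.
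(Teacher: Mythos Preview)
Your proof is correct and takes a different route from the paper's. The paper writes down the two transition matrices $M=(d_{\lambda,S})$ and $M'=(c_{S,\lambda})$ and asserts $MM'=I$ by invoking the forward reference Lemma~\ref{lem: Schur orthogonal in rectangle}, a Jacobi bialternant identity obtained by summing over the subsets $S$. You instead prove $M'M=I$ by summing over the partitions $\lambda\subseteq R(n,k)$ and reducing to the dual Cauchy identity, arriving at the closed product formula
\[
\sum_{\lambda\subseteq R(n,k)}(-1)^{|\lambda|}\,s_{\tilde\lambda}(y_1,\dots,y_{n-k})\,s_\lambda(x_1,\dots,x_k)=\prod_{i=1}^{k}\prod_{j=1}^{n-k}(y_j-x_i).
\]
Your argument is self-contained and in fact sidesteps a small subtlety in the paper's: Lemma~\ref{lem: Schur orthogonal in rectangle} as stated carries the hypothesis $|\mu|\le k(n-k)-|\lambda|$, which does not literally cover every entry of $MM'$ (the bialternant formula in its \emph{proof} does, since for $\lambda,\nu\subseteq R(n,k)$ all entries of $(\lambda+\st_k,\tilde\nu+\st_{n-k})$ lie in $[0,n-1]$, forcing either a repeat or a rearrangement of $\st_n$). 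The paper's route, on the other hand, reuses a lemma that is central throughout the rest of the paper. The two key identities are dual: one sums over $k$-subsets of $[n]$, the other over partitions in the rectangle.
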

\begin{proof}
Consider a matrix $M$ whose rows are indexed by $\lambda\subseteq R(n,k)$ and columns are indexed by $S\in \binom{n}{k}$. Its entries are given by
\begin{equation*}
    M_{\lambda,S}=(-1)^{|\lambda|}\dfrac{s_\lambda[z_S] \prod_{j \in S^c}{z_j}}{\prod_{\substack{i \in S \\ j \in S^c}} (z_j - z_i)}.
\end{equation*}
Then let $M'$ be a matrix whose columns are indexed by $\lambda\subseteq R(n,k)$ and rows are indexed by $S\in \binom{n}{k}$. Its entries are given by
\begin{equation*}
    M'_{S,\lambda}=\frac{s_{\tilde{\lambda}}[z_{S^{c}}]}{\prod_{j\in S^{c}}z_i}.
\end{equation*}
By Lemma \ref{lem: Schur orthogonal in rectangle} we have that $M M'$ equals the identity matrix. 
\end{proof}

From now on we regard $z_1,z_2,\dots,z_n$ as indeterminates and let
\begin{equation*}
    \widetilde{H}_{\mu^{S}}[X;q,t,z]:=\sum_{\lambda\subseteq k \times (n-k)}\frac{s_{\tilde{\lambda}}[z_{S^{c}}]}{\prod_{j\in S^{c}}z_i}\I_{\mu,\lambda,k}.
\end{equation*}
Define $\Phi$ to be a specialization map on functions in $z_1,z_2,\dots,z_n$ given by letting $z_i = T_{\mu^{\{i\}}}/T_\mu = q^{-\coarm(c_i)} t^{-\coleg(c_i)}$. By Lemma \ref{lem: H in terms of I}, we trivially have $\Phi( \widetilde{H}_{\mu^{S}}[X;q,t,z])=\widetilde{H}_{\mu^{S}}$.

Let $\pi_{i,j}$\footnote{This operator is motivated by Butler's symmetric function \cite{But94} and for $j=i+1$, $\pi_{i,j}$ coincides with the \emph{Demazure operator $\pi_i$}.} be the operator acting on functions in $z_1,z_2,\dots,z_n$ defined by
\begin{equation*}
    \pi_{i,j}f(z_1,z_2,\cdots,z_n)=\frac{z_j f - z_i f\vert_{z_i \leftrightarrow z_j}}{z_j-z_i}.
\end{equation*}

Denoting $S=\{i_1<i_2<\dots<i_k\}$ and $S^{c}=\{j_1<j_2<\dots<j_{n-k}\}$, for a partition $\lambda \subseteq R(n,k)$ we define $\pi_{\lambda,S}$ to be a sequence of operators given by
\begin{equation*}
   \pi_{\lambda,S}=\prod_{(r,s)\in\lambda}\pi_{i_r,j_s}.
\end{equation*}
We may consider $\widetilde{H}_{\mu^{S}}[X;q,t,z]$ as a function in $z_1,z_2,\dots,z_n$ regarding $\I_{\mu,\lambda,k}$ as constants. In this sense the operators $\pi_{i,j}$ act on $ \widetilde{H}_{\mu^{S}}[X;q,t,z]$ by
\begin{equation*}
    \pi_{\lambda,S}( \widetilde{H}_{\mu^{S}}[X;q,t,z])=\sum_{\nu\subseteq R(n,k)} \left( \pi_{\lambda,S}\frac{s_{\tilde{\nu}}[z_{S^{c}}]}{\prod_{j\in S^{c}}z_i}\right)\I_{\mu,\nu,k}.
\end{equation*}
Finally, we define 
\begin{equation}\label{eq: module intersection sym def}
    \widetilde{H}^{\lambda}_{\mu^{S}}:=\Phi\left(\pi_{\lambda,S}( \widetilde{H}_{\mu^{S}}[X;q,t,z])\right).
\end{equation}
Now we give a conjectural module theoretic interpretation of $\widetilde{H}^{\lambda}_{\mu^{S}}$. To that end, we define a notation. 

\begin{definition}
    For $S\in\binom{[n]}{k}$ and a partition $\lambda \in R(n,k)$, denote by
    \begin{equation*}
        S=\{i_1<i_2<\dots<i_k\}, \qquad S^{c}=\{j_1<j_2<\dots<j_{n-k}\}.
    \end{equation*}
    We define $S(\lambda)$ to be a set of $S'\in\binom{[n]}{k}$ satisfying the following: Denote by
    \begin{equation*}
        S\setminus S'=\{i_{a_1}<i_{a_2}<\dots<i_{a_r}\}, \qquad S'\cap S^{c}=\{j_{b_1}>j_{b_2}>\dots>j_{b_r}\},
    \end{equation*}
    then we have $b_{s}\leq \lambda_{a_s}$ for $1\leq s\leq r$. For example in the extreme, $S(\emptyset)=\{S\}$ and $S(R(n,k))=\binom{[n]}{k}$. For example in the middle, letting $k=2$, $n=4$ and $\lambda=(2,1)$, 
    we have $S(\lambda)=\{\{1,2\},\{1,3\},\{2,3\},\{2,4\},\{3,4\}\}$. 
\end{definition}
\begin{conj}\label{conj: sf extend}
We have
\begin{equation*}
      \widetilde{H}^{\lambda}_{\mu^{S}}=\grFrob\left(\cap_{S'\in S(\lambda)}R_{\mu^{S'}}\right).
\end{equation*}
\end{conj}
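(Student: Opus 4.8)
The plan is to establish Conjecture~\ref{conj: sf extend} by induction on $|\lambda|$, interpreting each Demazure-type factor $\pi_{i_r,j_s}$ of $\pi_{\lambda,S}$ as an explicit operation on Garsia--Haiman module intersections. It is worth first recording the scope of the statement: the classical science fiction conjecture \eqref{eq: sf conjecture} is the case $k=1$, $\lambda=R(n,1)$. For that $\lambda$ the profile condition forces $S(\lambda)=\binom{[n]}{1}$, so the right-hand side is $\grFrob\big(\bigcap_{i=1}^{n}R_{\mu^{(i)}}\big)$, and a direct computation with \eqref{eq: def of I_mu,lambda,k} and \eqref{eq: module intersection sym def} — the full string of operators $\pi_{i,j}$ with $j\neq i$ annihilates every term $\I_{\mu,\nu,1}$ with $\nu\neq\emptyset$ — identifies $\widetilde H^{R(n,1)}_{\mu^{\{i\}}}$ with $\I_{\mu^{(1)},\dots,\mu^{(n)}}$. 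Since \eqref{eq: sf conjecture} is itself open beyond a few families, a complete proof of Conjecture~\ref{conj: sf extend} is out of reach of current methods; what follows is the road map I would pursue and the sub-problems it isolates.

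\textbf{Step 1 (the expected dimension).} First I would pin down $\dim\bigcap_{S'\in S(\lambda)}R_{\mu^{S'}}$. Feeding the operators $\pi_{\lambda,S}$ into the $q=t=1$ formula of Theorem~\ref{thm: q=t=1} produces the conjectural ungraded character of the intersection, hence a closed form for its dimension as a Kreweras-type sum built from the numbers $W_{\nu,\tau}$ and the determinants $d^{(k)}_{\lambda,\nu}$; this is the analogue of the $n!/k$ conjecture \eqref{eq: n!/k conjecture} recorded in the appendix. Against this I would prove an \emph{unconditional lower bound} by producing explicit elements of the intersection: following the inductive scheme of Bergeron--Garsia \cite{BG99}, one builds a spanning family for $R_\mu$ from products of the $x$- and $y$-variables attached to the corners and their iterated derivatives, and tracks which members descend simultaneously into every $R_{\mu^{S'}}$ with $S'\in S(\lambda)$. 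Reconciling this count with the formula coming from Theorem~\ref{thm: q=t=1} would reduce the dimension part of the conjecture to a combinatorial identity.

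\textbf{Step 2 (a module realization of $\pi_{i,j}$).} The base case $\lambda=\emptyset$ is Haiman's theorem \cite{Hai01}, $\grFrob(R_{\mu^S})=\widetilde H_{\mu^S}$, with $S(\emptyset)=\{S\}$. Adjoining a cell $(r,s)$ to $\lambda$ multiplies $\widetilde H^{\lambda}_{\mu^S}$ by one further factor $\pi_{i_r,j_s}$, and on the module side it enlarges $S(\lambda)$ by precisely those sets $S'$ whose profile relative to $S$ acquires that cell — i.e.\ the intersection is taken against additional Garsia--Haiman modules. I would therefore seek an $\mathfrak{S}_n$-equivariant, bidegree-shifting map between the two intersections, constructed — in the spirit of \cite{BG99} — from multiplication by and differentiation with respect to the $x$- and $y$-variables of the corners $c_{i_r}$ and $c_{j_s}$, whose effect on the graded Frobenius characteristic is exactly $\pi_{i_r,j_s}$ and whose image is the next intersection. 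This is the principal obstacle: when $j=i+1$ the operator $\pi_{i,i+1}$ is the Demazure operator, and producing such a map already in that case is essentially Butler's conjecture \cite{But94}, which is open.

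\textbf{Step 3 (geometry, and realistic targets).} A perhaps more promising route is to bypass the operator bookkeeping and compute these intersections geometrically, via Haiman's isospectral Hilbert scheme and the nested Hilbert scheme $\mathrm{Hilb}^{n-k,n}$: one would aim to realize $\I_{\mu,\lambda,k}$ as the equivariant Euler characteristic of an explicit sheaf and $\bigcap_{S'}R_{\mu^{S'}}$ as a fiber, with the Demazure operators arising from Bott--Samelson-type resolutions. Since even \eqref{eq: sf conjecture} is unresolved, a realistic interim target is to verify Conjecture~\ref{conj: sf extend} in the families where the science fiction conjecture is known — hooks, and partitions with at most two rows or columns — using Theorem~\ref{thm: q=t=1} to supply the target dimensions and the explicit elements from Step~1 to attain them; a useful weaker waypoint, implied by the conjecture, is the Schur positivity of $\widetilde H^{\lambda}_{\mu^S}$.
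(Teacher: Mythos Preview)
The statement in question is a \emph{conjecture}, not a theorem, and the paper does not prove it. You recognize this correctly: you explicitly note that even the special case \eqref{eq: sf conjecture} is open and that a complete proof is out of reach, and you offer a roadmap rather than a proof. That is the right assessment; there is nothing in the paper to compare your argument against, because the paper contains no proof of Conjecture~\ref{conj: sf extend}. The paper only observes that the case $k=1$, $\lambda=R(n,1)$ recovers \eqref{eq: sf conjecture}, and that the conjecture would imply Schur positivity of the differences $\widetilde{H}^{\lambda^{(1)}}_{\mu^{S}} - \widetilde{H}^{\lambda^{(2)}}_{\mu^{S}}$ for $\lambda^{(1)}\subset\lambda^{(2)}$, refining Macdonald positivity.

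Your roadmap is a reasonable outline of how one might attack the problem, and it correctly identifies the principal obstruction: realizing the operators $\pi_{i,j}$ module-theoretically runs into Butler's conjecture already in the simplest nontrivial case. Your Step~1 aligns with the appendix's dimension data, and your Step~3 correctly points toward the isospectral Hilbert scheme as the natural geometric home. One small correction: in your reduction of the $k=1$, $\lambda=R(n,1)$ case, you claim the full string of $\pi_{i,j}$ annihilates every $\I_{\mu,\nu,1}$ with $\nu\neq\emptyset$; this is not quite how the identification works, since $\widetilde{H}^{R(n,k)}_{\mu^S}$ is obtained by applying $\pi_{\lambda,S}$ to the $z$-dependent coefficients and then specializing via $\Phi$, and the resulting linear combination of the $\I_{\mu,\nu,k}$ need not collapse to a single term in general. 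But this is a detail of the formalism, not a gap in strategy, and in any case the paper provides no proof for you to deviate from.
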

Letting $\lambda$ to be a full rectangle $R(n,k)$ the right-hand side is the Frobenius characteristic of the full intersection. It is easy to check that the left-hand side $\widetilde{H}^{R(n,k)}_{\mu^{S}}$ is independent of the choice of $S\in \binom{[n]}{k}$. Moreover, setting $k=1$ and $\lambda=R(n,1)$, Conjecture \ref{conj: sf extend} simply reduces to \eqref{eq: sf conjecture}.

For partitions $\lambda^{(1)}\subset \lambda^{(2)}$, we trivially have $S(\lambda^{(1)}) \subset S(\lambda^{(2)})$. Therefore Conjecture \ref{conj: sf extend} implies that 
\begin{equation*}
    \widetilde{H}^{\lambda^{(1)}}_{\mu^{S}}- \widetilde{H}^{\lambda^{(2)}}_{\mu^{S}}
\end{equation*}
is Schur positive. Given a sequence $\emptyset=\lambda^{(1)}\subset \lambda^{(1)}\subset \dots \subset \lambda^{(\ell)}=R(n,k)$, we may obtain a sequence of symmetric functions that grows from $\widetilde{H}^{R(n,k)}_{\mu^{S}}$ to $\widetilde{H}^{\emptyset}_{\mu^{S}}=\widetilde{H}_{\mu^{S}}$ in a Schur positive sense. This refines the Macdonald positivity.

\section{Proof of Theorem~\ref{thm: main theorem} (a) and (b)}
\label{sec3: GHT}
\subsection{Technical Lemmas}

We begin by recalling several technical lemmas. To state the first lemma, we review the following notation
\begin{align*}
    &M = (1-q)(1-t), \qquad 
    B_\mu = \sum_{c\in\mu} q^{\coarm_\mu(c)} t^{\coleg_\mu(c)}, \qquad D_\mu = M B_\mu - 1, \\
    &\tilde{h}_\mu = \prod_{c\in\mu} (q^{\arm_\mu(c)} - t^{\leg_\mu(c) + 1}), \qquad \tilde{h}'_\mu = \prod_{c\in\mu} (t^{\leg_\mu(c)} - q^{\arm_\mu(c) + 1}).
\end{align*}
We will use brackets to denote plethystic substitutions. We use two different minus signs $-$ and $\epsilon$ to denote
\[
    p_k[-X]=-p_k[X], \qquad \text{ and } \qquad p_k[\epsilon X] = (-1)^k p_k[X].
\]
Furthermore, for a symmetric function \( f \), define \( \Pi'_f \) as
\[
    \Pi'_f[X; q, t] = \nabla^{-1} f[X - \epsilon],
\]
and \( \rev(f) \) as the \( q, t \)-reversal
\[
    \rev(f) = f\big|_{q \mapsto q^{-1}, t \mapsto t^{-1}}.
\]
In \cite{GT96}, the \( * \)-inner product \( \langle f, g \rangle_* \) is defined by
\[
    \langle p_\lambda, p_\mu \rangle_* = 
    \begin{cases}
        (-1)^{|\lambda| - \ell(\lambda)} z_\lambda \prod_{i=1}^{\ell(\lambda)} \left(1 - q^{\lambda_i}\right) \left(1 - t^{\lambda_i}\right) & \text{if } \mu = \lambda, \\
        0 & \text{otherwise}.
    \end{cases}
\]
The \( * \)-inner product exhibits numerous useful properties. In particular, it commutes with the \( q, t \)-reversal: for symmetric functions \( f \) and \( g \) of homogeneous degree \( m \), we have
\begin{equation}\label{eq: reversal *-inner product}
    \langle \rev(f), \rev(g) \rangle_* = (qt)^m \rev \left( \langle f, g \rangle_* \right).
\end{equation}

In addition, the modified Macdonald polynomials form an orthogonal basis with respect to the \( * \)-inner product \cite[Theorem 1.1]{GT96}:
\begin{equation}\label{Eq: orthogonality of Mac w.r.t <>*}
    \langle \widetilde{H}_\lambda, \widetilde{H}_\mu \rangle_* = 
    \begin{cases} 
        \tilde{h}_\mu \tilde{h}'_\mu & \text{if } \mu = \lambda, \\
        0 & \text{otherwise}.
    \end{cases}
\end{equation}
The following lemma \cite[Equation (3.4)]{KLO23} provides a formula for the Macdonald polynomial skewed by an elementary symmetric function. The proof of the lemma uses Garsia, Haiman, and Tesler's plethystic formula \cite[Theorem I.2]{GHT99}.
\begin{lem}\label{lem: skewing Macdonald}\cite{KLO23}
For a partition $\mu \vdash n$, we have
\[
    e^{\perp}_{n-m} \widetilde{H}_\mu = (qt)^m T_\mu \sum_{\lambda \vdash m} \rev \left( \Pi'_{\widetilde{H}_\lambda}[D_\mu; q, t] \right) \frac{T_\lambda \widetilde{H}_\lambda}{\tilde{h}_\lambda \tilde{h}'_\lambda}.
\]
\end{lem}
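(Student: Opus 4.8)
The plan is to expand $e_{n-m}^{\perp}\widetilde{H}_\mu$ in the modified Macdonald basis of $\mathbf{Sym}$ in degree $m$ and to read off the coefficients using the $*$-orthogonality \eqref{Eq: orthogonality of Mac w.r.t <>*} together with the Garsia--Haiman--Tesler plethystic formula \cite[Theorem I.2]{GHT99}. Since $e_{n-m}^{\perp}\widetilde{H}_\mu$ is homogeneous of degree $m$, \eqref{Eq: orthogonality of Mac w.r.t <>*} yields
\[
    e_{n-m}^{\perp}\widetilde{H}_\mu=\sum_{\lambda\vdash m}\frac{\big\langle e_{n-m}^{\perp}\widetilde{H}_\mu,\ \widetilde{H}_\lambda\big\rangle_*}{\tilde{h}_\lambda\tilde{h}'_\lambda}\,\widetilde{H}_\lambda ,
\]
so the lemma is equivalent to the scalar identity $\big\langle e_{n-m}^{\perp}\widetilde{H}_\mu,\widetilde{H}_\lambda\big\rangle_*=(qt)^m T_\mu T_\lambda\,\rev\big(\Pi'_{\widetilde{H}_\lambda}[D_\mu;q,t]\big)$ for every $\lambda\vdash m$.

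To evaluate this $*$-pairing I would first unfold $e_{n-m}^{\perp}$ via the coproduct identity $(e_{n-m}^{\perp}f)[X]=\big\langle f[X+Y],\,e_{n-m}[Y]\big\rangle^{Y}$ together with the standard translation $\langle f,g\rangle_*=\langle f,\,\omega(g[MX])\rangle$ between the $*$- and Hall inner products; this replaces the $*$-pairing by a Hall pairing that only involves the external alphabet and a plethystic transform of $\widetilde{H}_\lambda$, with the combinatorics of $\mu$ not yet present. Next I would insert \cite[Theorem I.2]{GHT99}, which expresses $\widetilde{H}_\mu$ as an explicit plethystic sum governed by the content alphabet $B_\mu$, equivalently by $D_\mu=MB_\mu-1$. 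Performing the $e_{n-m}$-extraction against this expression collapses the GHT sum, and after simplification $D_\mu$ appears precisely as the plethystic argument in $\widetilde{H}_\lambda[X-\epsilon]$, while the $\nabla^{-1}$ of $\Pi'_{\widetilde{H}_\lambda}$ is absorbed by the $\nabla$-eigenvalues ($\nabla\widetilde{H}_\nu=T_\nu\widetilde{H}_\nu$) produced in the simplification. It is consistent here --- and, for $m=n-1$, reproves the known fact that $e_1^{\perp}\widetilde{H}_\mu$ is supported on the corner removals of $\mu$ --- that $\Pi'_{\widetilde{H}_\lambda}[D_\mu]=0$ unless $\lambda\subseteq\mu$.

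The remaining prefactors and the $q,t$-reversal are then pinned down by \eqref{eq: reversal *-inner product}. Applying $\rev$ to the scalar identity above and using that $\rev$ commutes with $e_{n-m}^{\perp}$ (since $e_{n-m}$ carries no $q,t$), the rule $\langle\rev f,\rev g\rangle_*=(qt)^m\rev(\langle f,g\rangle_*)$ shows that both sides of the scalar identity transform the same way; comparing the transformed and untransformed versions is exactly what forces the outer $\rev$ and the factor $(qt)^m$ on the right-hand side. The monomials $T_\mu$ and $T_\lambda$ are then tracked as $\nabla$-eigenvalues --- $T_\mu$ from the GHT expression for $\widetilde{H}_\mu$ and $T_\lambda$ from the $\nabla^{-1}$ inside $\Pi'_{\widetilde{H}_\lambda}$ --- while the denominator $\tilde{h}_\lambda\tilde{h}'_\lambda$ is supplied by \eqref{Eq: orthogonality of Mac w.r.t <>*}.

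I expect the main obstacle to be the GHT-insertion step: matching the raw output of the Garsia--Haiman--Tesler plethystic formula --- an intricate sum of products of content factors --- with the compact operator expression $\rev\big(\Pi'_{\widetilde{H}_\lambda}[D_\mu]\big)$. This amounts to recognizing the relevant sub-sum as the evaluation of $\widetilde{H}_\lambda$ at the shifted content alphabet and correctly interleaving the three plethystic operations buried in $\Pi'$ and $\rev$ (the $\nabla^{-1}$, the $\epsilon$-shift, and the $q,t$-reversal). Once that identification is in place, the bookkeeping of $(qt)^m$, $T_\mu$, $T_\lambda$, and $\tilde{h}_\lambda\tilde{h}'_\lambda$ is delicate but mechanical.
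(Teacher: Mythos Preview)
The paper does not actually prove this lemma; it cites it as \cite[Equation~(3.4)]{KLO23} and merely remarks that the proof uses Garsia--Haiman--Tesler's plethystic formula \cite[Theorem~I.2]{GHT99}. Your outlined approach --- expand $e_{n-m}^{\perp}\widetilde{H}_\mu$ in the Macdonald basis via the $*$-orthogonality, then identify the coefficients using the GHT plethystic formula, tracking the reversal and the $T$-factors --- is exactly the method the paper points to.

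That said, what you have written is a proof \emph{plan}, not a proof: the decisive computation (inserting the GHT formula and collapsing the sum to $\rev(\Pi'_{\widetilde{H}_\lambda}[D_\mu])$) is described in words but not carried out, and you yourself flag it as the main obstacle. The surrounding scaffolding (the $*$-expansion, the role of \eqref{eq: reversal *-inner product}, the appearance of $T_\mu,T_\lambda$ as $\nabla$-eigenvalues) is correct and well-motivated, so the plan is sound; but to turn it into a proof you would need to actually execute the GHT step, which in \cite{KLO23} goes through an explicit plethystic manipulation rather than a soft ``recognition'' argument.
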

    
In the proof relating the Macdonald intersection polynomial \(\I_{\mu^{(1)}, \dots, \mu^{(k)}}\) and \(\nabla e_{k-1}\), a folklore lemma \cite[Lemma 3.3]{KLO23} played a key role. It turns out that this well-known lemma can be generalized to Lemma~\ref{lem: Schur orthogonal in rectangle}.

For $\lambda$ and $\mu$, we denote their coordinate-wise sum as $\lambda + \mu$ and concatenate them to form the vector $(\lambda, \mu)$. Additionally, let $\st_k$ represent the staircase $(k-1, k-2, \ldots, 1, 0)$. For $\lambda \subseteq R(n,k)$ and $\mu \subseteq R(n,n-k)$, we examine the expression $(\lambda + \st_k, \mu + \st_{n-k})$. In this case, we treat $\lambda$ (a vector of length $k$) and $\mu$ (a vector of length $n-k$) by appending zeros as necessary. It can be easily verified that $(\lambda + \st_k, \mu + \st_{n-k})$ forms a rearrangement of the staircase $\st_n$ if and only if $\mu = \tilde{\lambda}$. For instance, when $n = 6$ and $k = 3$, we find $R(n,k) = (3,3,3)$. Given $\lambda = (3,1)$, we compute $\tilde{\lambda} = (2,2,1)$, which leads to $\lambda + \st_k = (5,2,0)$ and $\tilde{\lambda} + \st_{n-k} = (4,3,1)$. Therefore, the combined vector \((\lambda + \st_k,\tilde{\lambda} + \st_{n-k}) = (5,2,0,4,3,1)\) rearranges to form the staircase $\st_n$.

\begin{lem}\label{lem: Schur orthogonal in rectangle}
Let \( z_1, \dots, z_n \) be variables. Let \( \lambda \) be a partition contained within the rectangle \( R(n,k) \). For a partition \( \mu \) of size \( |\mu| \le k(n-k) - |\lambda| \),
\begin{equation}\label{eq: Schur orthogonal}
    \sum_{S \in \binom{[n]}{k}} \frac{s_\lambda[z_S] s_{\mu}[z_{S^c}]}{\prod_{i \in S, j \in S^c} (z_j - z_i)} = (-1)^{|\lambda|}\delta_{\tilde{\lambda}, \mu}.
\end{equation}
\end{lem}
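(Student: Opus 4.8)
The plan is to prove \eqref{eq: Schur orthogonal} by recognizing the left-hand side as a residue/contour-integral computation, or equivalently as an instance of a Cauchy-type identity for Schur functions paired against the ``bosonic'' kernel $\prod_{i\in S, j\in S^c}(z_j-z_i)^{-1}$. Concretely, I would first observe that summing over $S\in\binom{[n]}{k}$ with the factor $\prod_{i\in S,j\in S^c}(z_j-z_i)^{-1}$ is exactly the operation that extracts the coefficient of a top-degree monomial in an antisymmetrization: for any Laurent polynomials $f$ (in $k$ variables) and $g$ (in $n-k$ variables), one has
\begin{equation*}
    \sum_{S\in\binom{[n]}{k}} \frac{f[z_S]\,g[z_{S^c}]}{\prod_{i\in S,j\in S^c}(z_j-z_i)}
    = \frac{\mathcal{A}_n\bigl(f(z_1,\dots,z_k)\,g(z_{k+1},\dots,z_n)\,\prod_{1\le i<j\le n,\ \{i,j\}\not\subseteq\text{blocks}}(\cdots)\bigr)}{\mathcal{A}_n(z^{\st_n})},
\end{equation*}
where $\mathcal{A}_n$ is the $\mathfrak{S}_n$-antisymmetrizer; after clearing the Vandermonde in the two blocks separately, the point is that $s_\lambda[z_S]s_\mu[z_{S^c}]\big/\prod_{i\in S,j\in S^c}(z_j-z_i)$ becomes, up to sign, the antisymmetrization of the single monomial $z^{(\lambda+\st_k,\,\mu+\st_{n-k})}$ divided by $z^{\st_n}$'s antisymmetrization. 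This is the computation already foreshadowed in the paragraph preceding the lemma.

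The key steps, in order, are: (1) Write $s_\lambda[z_S] = a_{\lambda+\st_k}(z_S)/a_{\st_k}(z_S)$ and $s_\mu[z_{S^c}] = a_{\mu+\st_{n-k}}(z_{S^c})/a_{\st_{n-k}}(z_{S^c})$ via the bialternant formula, where $a_\alpha$ denotes the alternant. (2) Note the identity $a_{\st_k}(z_S)\,a_{\st_{n-k}}(z_{S^c})\,\prod_{i\in S,j\in S^c}(z_j-z_i) = \pm\, a_{\st_n}(z_1,\dots,z_n)$ up to a sign depending on the relative order of $S$ and $S^c$ inside $[n]$, since the left side is $\pm$ the full Vandermonde $\prod_{a<b}(z_b-z_a)$. (3) Substitute to get that each summand equals $\pm\, a_{\lambda+\st_k}(z_S)\,a_{\mu+\st_{n-k}}(z_{S^c})/a_{\st_n}(z)$, and summing over $S$ with the correct signs reconstitutes the full antisymmetrizer: the sum becomes $a_{(\lambda+\st_k,\,\mu+\st_{n-k})}(z_1,\dots,z_n)/a_{\st_n}(z_1,\dots,z_n)$, interpreting the alternant of a possibly-non-strict or repeated exponent vector in the usual way (it is $0$ if two exponents coincide, $\pm s_\nu$ if they sort to $\nu+\st_n$). (4) Finally, invoke the combinatorial fact quoted before the lemma statement: $(\lambda+\st_k,\,\mu+\st_{n-k})$ is a permutation of $\st_n$ if and only if $\mu=\tilde\lambda$, and in that case the alternant ratio is $\pm 1$; otherwise the exponent vector either has a repeat or sorts to $\nu+\st_n$ with $\nu\ne\emptyset$ of size $|\lambda|+|\mu|$, but since $|\lambda|+|\mu|\le k(n-k)$ and the ratio $a_{(\dots)}/a_{\st_n}$ is a Schur polynomial in $n$ variables which, when $|\lambda|+|\mu| < $ the degree forced by genuine containment fails... actually the cleaner route is: the degree of $a_{(\lambda+\st_k,\mu+\st_{n-k})}/a_{\st_n}$ as a symmetric polynomial is $|\lambda|+|\mu|$, but the left-hand side of \eqref{eq: Schur orthogonal} has degree $0$ (the numerator has degree $|\lambda|+|\mu|$ and the denominator degree $k(n-k)$... no). Let me restate (4): track the total degree. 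Each summand on the left of \eqref{eq: Schur orthogonal} is homogeneous of degree $|\lambda|+|\mu|-k(n-k)$; for this to be a nonzero constant we need $|\lambda|+|\mu|=k(n-k)$, consistent with $\mu=\tilde\lambda$. When $|\mu|<k(n-k)-|\lambda|$ the expression is a homogeneous symmetric rational function of strictly negative degree that is also a polynomial (the denominators cancel in the symmetrization), hence identically $0$. When $|\mu|=k(n-k)-|\lambda|$ but $\mu\ne\tilde\lambda$, the exponent vector $(\lambda+\st_k,\mu+\st_{n-k})$ has a repeated entry, so its alternant vanishes, giving $0$; and when $\mu=\tilde\lambda$ it is a permutation of $\st_n$, so the ratio is the sign of that permutation, which one computes to be $(-1)^{|\lambda|}$.

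I expect the main obstacle to be bookkeeping the signs: carefully reconstituting $\sum_S$ of $\pm a_{\lambda+\st_k}(z_S)\,a_{\mu+\st_{n-k}}(z_{S^c})$ into the single alternant $a_{(\lambda+\st_k,\mu+\st_{n-k})}(z)$ requires that the sign $\pm$ attached to each $S$ is precisely $\sgn$ of the shuffle permutation sorting $(S,S^c)$ into $[n]$, and then verifying that the final sign in the $\mu=\tilde\lambda$ case collapses to $(-1)^{|\lambda|}$ — this last step amounts to computing the sign of the permutation of $\st_n$ given by $(\lambda+\st_k,\tilde\lambda+\st_{n-k})$, which can be done by an inversion count or, more slickly, by specializing $z_i\to$ generic values and comparing with a known base case such as $\lambda=\emptyset$ (where the identity reads $\sum_S s_{R(n-k,?)}\cdots$, or more simply $\mu=R(n,n-k)$ forcing $\lambda=\emptyset$ and the sum telescoping to $1$). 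An alternative that sidesteps some sign pain is to prove the identity first for $\mu=\tilde\lambda$ by the residue theorem — writing the sum as an iterated residue of $s_\lambda(z_{[k]})s_{\tilde\lambda}(z_{\text{rest}})/\prod(z_j-z_i)$ and using that the only surviving contribution is the ``diagonal'' one — and then handle $|\mu|<k(n-k)-|\lambda|$ purely by the degree argument above, which needs no signs at all.
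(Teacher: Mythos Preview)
Your proposal is correct and follows essentially the same route as the paper's own proof: both rewrite each summand via the bialternant formula, use the Vandermonde factorization $a_{\st_k}(z_S)a_{\st_{n-k}}(z_{S^c})\prod_{i\in S,j\in S^c}(z_j-z_i)=\pm a_{\st_n}(z)$, and then recognize the sum over $S$ as the Laplace expansion of $a_{(\lambda+\st_k,\mu+\st_{n-k})}(z)$ by complementary minors, so that the left-hand side equals $a_{(\lambda+\st_k,\mu+\st_{n-k})}(z)/a_{\st_n}(z)$. The paper finishes exactly as you do in your restated step~(4): this ratio is an alternating polynomial of total degree $|\lambda|+|\mu|+|\st_k|+|\st_{n-k}|\le|\st_n|$ divided by the Vandermonde, hence vanishes unless $(\lambda+\st_k,\mu+\st_{n-k})$ is a permutation of $\st_n$ (equivalently $\mu=\tilde\lambda$), in which case the sign is $(-1)^{|\lambda|}$.
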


\begin{proof}
Recall Jacobi's bi-alternant formula 
\[
    s_\lambda(z_1,\dots,z_n) = \dfrac{\displaystyle a_{\lambda+\st_n}(z_{1},z_{2},\dots ,z_{n})}{\displaystyle a_{\st_n}(z_{1},z_{2},\dots ,z_{n})},
\]
for the Schur polynomials, where $a_{\lambda}(z_1,\dots,z_n)$ is the alternating polynomial defined by
\[
a_{\lambda}(z_1,\dots,z_n) = \det\left(z_j^{\lambda_i}\right)_{1\leq i,j\leq n}.
\]
In particular, $a_{\st_n}(z_1,\dots,z_n)$ is the usual Vandermonde determinant. The left-hand side of \eqref{eq: Schur orthogonal} becomes
\begin{equation}\label{eq: Vandermonde idea}
    \sum_{S \in \binom{[n]}{k}} \frac{s_\lambda[z_S] s_{\mu}[z_{S^c}]}{\prod_{i \in S, j \in S^c} (z_j - z_i)} = \sum_{S\in\binom{[n]}{k}} \dfrac{a_{\lambda+\st_k}(z_S)a_{\mu+\st_{n-k}}(z_{S^c})}{a_{\st_k}(z_S)a_{\st_{n-k}}(z_{S^c})\prod_{i\in S, j \in S^c}(z_j-z_i)} =  \dfrac{\sum_{S\in\binom{[n]}{k}} \sgn(S) a_{\lambda+\st_k}(z_S)a_{\mu+\st_{n-k}}(z_{S^c})}{a_{\st_n}(z_{[n]})}  
\end{equation}
where $\sgn(S)=(-1)^{(\sum_{r=k+1}^{n}r)-\sum_{s\in S}s}$.

The Laplace expansion of a determinant by complementary minors, we have
\[
    a_{(\lambda+\st_k,\mu+\st_{n-k})}(z_{[n]}) = \sum_{S\in\binom{[n]}{k}} \sgn(S) a_{\lambda+\st_k}(z_S)a_{\mu+\st_{n-k}}(z_{S^c}).
\]
By substituting this into \eqref{eq: Vandermonde idea}, we obtain
\[
    \sum_{S \in \binom{[n]}{k}} \frac{s_\lambda[z_S] s_{\mu}[z_{S^c}]}{\prod_{i \in S, j \in S^c} (z_j - z_i)} = \dfrac{a_{(\lambda+\st_k,\mu+\st_{n-k})}(z_{[n]})}{a_{\st_n}(z_{[n]})}.
\]
Since the numerator $a_{(\mu+\st_{n-k},\lambda+\st_k)}$ is an alternating polynomial of degree $|\lambda|+|\mu|+|\st_k|+|\st_{n-k}|\le |\st_n|$, it is nonzero if and only if the rearrangement of $(\mu+\st_{n-k},\lambda+\st_k)$ is exactly the staircase $\st_n$. In that case, we have
\[
    \dfrac{a_{(\lambda+\st_k,\mu+\st_{n-k})}(z_{[n]})}{a_{\st_n}(z_{[n]})} = (-1)^{|\lambda|}\dfrac{a_{\st_n}(z_{[n]})}{a_{\st_n}(z_{[n]})} = (-1)^{|\lambda|}.
\]
\end{proof}

\begin{rmk}
Lemma~\ref{lem: Schur orthogonal in rectangle} can be seen as an application of the Atiyah–Bott localization theorem for equivariant cohomologies \cite{AB84}. More precisely, let \(M\) be a smooth manifold that admits an action by a compact connected Lie group \(G\). If the fixed points \(M^G\) consist of isolated points \(p\) with associated normal bundles \(\nu_p\) in \(M\) with Euler classes \(e(\nu_p)\), then the localization formula asserts that
\begin{equation}\label{eq: localization formula}
\int_M \phi = \sum_{p \in M^G} \int_p \frac{i^* \phi}{e(\nu_p)},    
\end{equation}
where \(i: M^G \hookrightarrow M\). 

If we let \(M\) be the Grassmannian \(\text{Gr}(n,k)\) with the natural torus \(G = (\mathbb{C}^*)^n\) action, then the fixed points are in one-to-one correspondence with \(k\)-subsets of \([n]\). Recall that the cohomology ring of the Grassmannian has an integral basis described by Schur functions. Localizing the Schur function indexed by the full-rectangle \(R(n,k)\) in \eqref{eq: localization formula} proves Lemma~\ref{lem: Schur orthogonal in rectangle}.
\end{rmk}

\subsection{Proof of Theorem~\ref{thm: main theorem} (a) and (b)}

Let \(\mu\) be a partition of \(N\) with \(n\) corners \(\{c_1, \dots, c_n\}\). Fix \(0 \le k \le n\) and let \(\lambda\) be a partition inside the rectangular partition $R(n,k)$. Let \(m = k(n-k) - |\lambda|=|\tilde{\lambda}|\). By the definition of \(\I_{\mu,\lambda,k}\), for an integer \(\ell\),
\begin{equation}
    e^\perp_{N-k-\ell} \I_{\mu,\lambda,k} = \sum_{S \in \binom{[n]}{k}} \frac{s_\lambda[z_S] \prod_{j \in S^c} z_j}{\prod_{i \in S, j \in S^c} (z_j - z_i)} e^\perp_{N-k-\ell} \widetilde{H}_{\mu^S}.
\end{equation}
By Lemma~\ref{lem: skewing Macdonald}, we have
\begin{equation}\label{eq: skewing I}
    e^\perp_{N-k-\ell} \I_{\mu,\lambda,k} = (qt)^\ell T_{\mu^{[n]}} \sum_{\nu \vdash \ell} \sum_{S \in \binom{[n]}{k}} \frac{s_\lambda[z_S]}{\prod_{i \in S, j \in S^c} (z_j - z_i)} \rev(\Pi_{\widetilde{H}_\nu}' [D_{\mu^S}(q,t); q, t]) \frac{T_\nu \widetilde{H}_\nu}{\tilde{h}_\nu \tilde{h}_\nu'}.
\end{equation}
Here, we used \(T_{\mu^{[n]}}= T_{\mu^S} \prod_{j \in S^c} z_j\). Note that 
\[
    \rev\left(\Pi'_{\widetilde{H}_\nu}[D_{\mu^S}(q,t); q, t]\right) = T_\nu \widetilde{H}_\nu \left[D_{\mu^{[n]}}(q^{-1}, t^{-1}) - \epsilon + \frac{M}{qt} z_{S^c}; q^{-1}, t^{-1}\right]
\]
is a polynomial in \(z_{S^c}\) of degree \(|\nu| = \ell\). If, on the right-hand side of \eqref{eq: skewing I}, the degree of the polynomial in \(z_{S^c}\) is less than \(m\), or equivalently if \(\ell < m\), then by Lemma~\ref{lem: Schur orthogonal in rectangle}, it vanishes. This proves Theorem~\ref{thm: main theorem}~(a).

Proceed to prove Theorem~\ref{thm: main theorem} (b). Now let \(\ell = m\) in \eqref{eq: skewing I}. Note that the leading term of \(\rev\left(\Pi'_{\widetilde{H}_\nu}[D_{\mu^S}(q,t); q, t]\right)\) is equal to 
\begin{align*}
T_\nu \widetilde{H}_\nu \left[\frac{M}{qt} z_{S^c}; q^{-1}, t^{-1}\right] &= (qt)^{-m}\omega \widetilde{H}_\nu \left[M z_{S^c}; q, t\right],
\end{align*}
by the symmetry relation $\omega \widetilde{H}_\mu = T_\mu \rev(\widetilde{H}_\mu)$.
Again, by Lemma~\ref{lem: Schur orthogonal in rectangle}, we only need to consider this leading term. Let us rewrite the right-hand side of \eqref{eq: skewing I} as
\begin{align*}
    &T_{\mu^{[n]}} \nabla \left(\sum_{\nu \vdash \ell} \sum_{S \in \binom{[n]}{k}} \frac{s_\lambda[z_S]}{\prod_{i \in S, j \in S^c} (z_j - z_i)} 
    \frac{\widetilde{H}_\nu \omega \widetilde{H}_\nu \left[M z_{S^c}; q, t\right]}{\tilde{h}_\nu \tilde{h}_\nu'}\right)\\
    &=T_{\mu^{[n]}} \nabla \left(\sum_{\rho \vdash \ell } \sum_{\nu \vdash \ell} \sum_{S \in \binom{[n]}{k}} \frac{s_\lambda[z_S]}{\prod_{i \in S, j \in S^c} (z_j - z_i)} 
    \frac{\widetilde{K}_{\rho', \nu}(q,t) \omega \widetilde{H}_\nu \left[M z_{S^c}; q, t\right]}{\tilde{h}_\nu \tilde{h}_\nu'} s_{\rho'}\right)\\
    &=T_{\mu^{[n]}} \nabla \left(\sum_{\rho \vdash \ell } \sum_{S \in \binom{[n]}{k}} \frac{s_\lambda[z_S] s_{\rho'}[z_{S^c}]}{\prod_{i \in S, j \in S^c} (z_j - z_i)} s_{\rho'}\right).
\end{align*}
Here, in the first equation, $\widetilde{K}_{\lambda,\mu}(q,t)$ is the \emph{Macdonald--Kostka polynomial}, the Schur coefficient of Macdonald polynomials
\[
    \widetilde{H}_\mu[X;q,t] = \sum_{\lambda\vdash n} \widetilde{K}_{\lambda,\mu}(q,t)s_\lambda.
\]
In the last equation, we used \cite[(2.67)]{Hag04}:
\[
    s_\lambda = \sum_{\beta\vdash |\lambda|} \dfrac{\tilde{K}_{\lambda',\beta}(q,t) \widetilde{H}_{\beta}[MX;q,t]}{\tilde{h}_\beta \tilde{h}'_\beta}.
\] 
Now applying Lemma~\ref{lem: Schur orthogonal in rectangle} again we obtain 
\[
T_{\mu^{[n]}} \nabla \left(\sum_{\rho \vdash \ell} \sum_{S \in \binom{[n]}{k}} \frac{s_\lambda[z_S] s_{\rho'}[z_{S^c}]}{\prod_{i \in S, j \in S^c} (z_j - z_i)} s_{\rho'}\right) = T_{\mu^{[n]}} \nabla s_{\tilde{\lambda}}.
\]

\section{Loehr-Warrington formula}

\label{sec: LW formula}
In this section, we recall the Loehr–Warrington formula, denoted by $\LW_{\lambda}$, introduced in \cite{LW08}. We then present our original contribution: Jacobi-Trudi type formula for $\LW_{\lambda}$ (Proposition \ref{prop: JT formula}), expressed using the operators defined in Definition \ref{def: operator definition}. By leveraging the relations among these operators (as established in Lemma \ref{lem: multiplication by q}), we modify the Jacobi-Trudi type formula, ultimately arriving at the expression presented in \eqref{eq: lw formula final}. In the following sections, we complete the proof of Theorem \ref{thm: main theorem} (c) by connecting the Macdonald intersection polynomial to \eqref{eq: lw formula final}.

\subsection{The Loehr--Warrington formula} \label{subsec: LW formula}
Throughout this section, we fix a poset $\PP$ on $\mathbb{Z}_{\ge 0}\times \mathbb{Z}_{\ge1}$ defined as follows\footnote{Our poset $\PP$ is both $(3+1)$-free and $(2+2)$-free, which is the condition in the famous Stanley–Stembridge conjecture regarding the $e$-positivity of chromatic symmetric functions.}. For $(a,b), (c,d) \in \mathbb{Z}_{\geq 0}\times \mathbb{Z}_{\geq 1}$ we say $(a,b) \prec_{\PP} (c,d)$ in $\PP$ if and only if $a + 1 < c$ or $a + 1 = c$ and $b \ge d$. Otherwise, we write $(a,b) \nprec_{\PP} (c,d)$. We also give a total ordering on $\mathbb{Z}_{\geq 0}\times \mathbb{Z}_{\geq 1}$ by a (mixed) lexicographic ordering. Define $(a,b)<_{\lex}(c,d)$ if $a<c$, or $a=c$ and $b>d$. Note that the ordering of the second coordinate is reversed. For example, in terms of order of $\PP$, we have
$(4,3) \prec_{\PP} (5,3)$ and $(4,2) \nprec_{\PP} (5,3)$. On the other hand, in terms of lexicographic order, we have $(4,3) \prec_{\lex} (5,3)$ and $(4,3) \prec_{\lex} (4,2)$.

Consider a tuple $L=(L_1,\dots,L_r)$ such that each $L_i$ is a finite sub(multi)set of $\mathbb{Z}_{\geq0} \times \mathbb{Z}_{\geq1}$. We define the \emph{diagonal inversion} $\dinv$ of $L$ by 
\begin{align*}
    \dinv(L)&=\sum_{i<j}\sum_{\substack{(a,b)\in L_i\\ (a',b')\in L_j}}\chi((a,b)<_{\lex}(a',b'))\chi((a,b)\nprec_{\PP}(a',b'))\\
    &=\sum_{i<j}\sum_{\substack{(a,b)\in L_i\\ (a',b')\in L_j}} \left(\chi(a=a')\chi(b>b') + \chi(a+1=a')\chi(b<b') \right).
\end{align*}
For a diagram \( D \), let \( T \) be a filling of \( D \) with elements from \(\mathbb{Z}_{\geq 0} \times \mathbb{Z}_{\geq 1} \). Denote \( T(i,j) \) to be a filling in the cell \((i,j) \in D\). We use \( T(i,j)_1 \) (resp. \( T(i,j)_2 \)) to refer to the first (resp. second) entry of \( T(i,j) \). We define the area of \( T \) as:
\[
\area(T) = \sum_{(i,j) \in D} T(i,j)_1.
\]
We use the notation \(\dinv(T)\) to denote \(\dinv(L_1, \dots, L_r)\), where \( L_i \) represents the set of entries in the \( i \)-th column of \( T \).

We fix a partition $\lambda \subseteq R(n,k)=k \times (n-k)$. Let $s$ be the size of the \emph{Durfee square}, which is the maximal number such that $\lambda_s\geq s$ (if does not exist, $s=0$). We define a \textit{dinv adjustment} of $\lambda$ denoted by $\adj(\lambda)$ as
\begin{equation*}
    \adj(\lambda)=\sum_{i=1}^{s}(\lambda_i-i).
\end{equation*}
Consider a $n$-vector
\begin{equation*}
    (\lambda_1+k-1,\lambda_2+k-2,\dots,\lambda_k,k,k+1,\dots,n-1)
\end{equation*}
where we regard $\lambda_i=0$ if $i>\ell(\lambda)$, and define $v(\lambda)$ to be a vector obtained by sorting entries in the above vector in a weakly increasing order. Then we define the \textit{bottom} of $\lambda$, denoted by $\bo(\lambda)$, as an $n$-vector given by
\begin{equation*}
\bo(\lambda) = (s+1, s+2, \ldots, s+n) - v(\lambda),
\end{equation*}
where the subtraction is performed element-wise. We also define a \textit{pivot} of $\lambda$, denoted by $\piv(\lambda)$, to be a vector $(a_1,a_2,\dots,a_s)$ of length $s$ where $a_i$ is a number satisfying
\begin{equation*}
    v(\lambda)_{a_i}=v(\lambda)_{a_i+1}=\lambda_{s+1-i}+k-(s+1-i),
\end{equation*}
i.e., the indices where $v(\lambda)$ is non increasing. Lastly we associate a diagram $D(\lambda)$ to the partition $\lambda$ as
\begin{equation*}
    D(\lambda)=[[\bo(\lambda)_1,s],[\bo(\lambda)_2,s],\dots,[\bo(\lambda)_n,s]]
\end{equation*}
and define a set $\mathcal{T}(\lambda)$ consisting of filling $T$ of $D(\lambda)$ satisfying the following conditions: 
\begin{itemize}
    \item $T(i+1,j)\succ_{\PP} T(i,j)$ and $T(i,j-1)\nsucc _{\PP} T(i,j)$
    \item for each $j> k-s$ if $j\in \piv(\lambda)$ we have $T(\bo(\lambda)_j,j)_1>0$
    \item for each $j> k-s$ if $j\notin \piv(\lambda)$ we have $T(\bo(\lambda)_j,j)_1=0$.
\end{itemize}

Note that the first condition says that $T$ (after taking appropriate reflection) is a $\PP$-tableau defined in \cite{Gas96}. 
The \emph{Loehr--Warrington formula} $\LW_{\lambda}$ is defined by
\begin{equation}\label{eq: LW formula}
\LW_{\lambda}:=q^{\adj(\lambda)} \sum_{T\in \mathcal{T}(\lambda)}q^{\dinv(T)}t^{\area(T)}x^T,
\end{equation}
where $x^T=\prod x_{T(i,j)_2}$.
\begin{example}\label{ex: Dlambda example}
Let $k=2$ and $n=5$, and consider $\lambda = (3, 2)$ within a $2 \times 3$ rectangle. To compute $\adj(\lambda)$, we first observe that $\lambda_2 \ge 2$, and $\lambda_3 < 3$. Thus, the $q$-adjustment $\adj(\lambda)$ is calculated as $(\lambda_1 - 1) + (\lambda_2 - 2) = 2$. The vector $v(\lambda)$ is the weakly increasing rearrangement $(2, 2, 3, 4, 4)$ of $(\lambda_1 + 1, \lambda_2 + 0, k + 1, \ldots, n) = (3 + 1, 2 + 0, 2, 3, 4)$. The bottom of $\lambda$ is given by $\bo(\lambda) = (3, 4, 5, 6, 7) - (2, 2, 3, 4, 4) = (1, 2, 2, 2, 3)$. The pivots are 1 and 4 since $v(\lambda)_1 = v(\lambda)_2$ and $v(\lambda)_4 = v(\lambda)_5$. Finally, $D(\lambda) = [[1, 2], [2, 2], [2, 2], [2, 2], [3, 2]] = [\{1, 2\}, \{2\}, \{2\}, \{2\}, \emptyset]$.
\end{example}

\begin{rmk}\label{remark: LW original to ours}
 In \cite{LW08}, the Loehr-Warrington formula \(\LW_{\lambda}\) is described in terms of nested labeled Dyck paths. We demonstrate how \(T \in \mathcal{T}(\lambda)\) can be naturally associated with such nested labeled Dyck paths. Specifically, we read the entries of each row of \(T\) from right to left and construct a Dyck path that satisfies the following condition: the number of boxes between the \(i\)-th upstep and the diagonal \(y = x\) equals the first coordinate of the \(i\)-th entry (counted from the right) in \(T\), while the second coordinate corresponds to its label. Finally, the starting point for each Dyck path is determined as follows: the Dyck path corresponding to the \(i\)-th row of \(T\) (counting from the bottom) begins at \((a, a)\), where \(a = \piv(\lambda)_i - i\). Refer to Figure \ref{fig:enter-label} for an example where $\lambda=(3,2)$.
\end{rmk}

\begin{figure}
    \centering
    \raisebox{50 pt}{\ytableausetup{boxsize=2.5em}
    \begin{ytableau}
     \textcolor{red}{(1,b_4)} &  \textcolor{red}{(2,b_3)} & \textcolor{red}{(1,b_2)} & \textcolor{red}{(0,b_1)} \\
     \textcolor{blue}{(0,b_5)}
     \end{ytableau}}
    \qquad
    \begin{tikzpicture}[scale=.9]
    \draw (0,0)--(0,4)--(4,4);
    \foreach \i in {0,...,3}{
    \draw (0,\i)--(4,\i);
    \draw (\i+1,4)--(\i+1,0);
    }
    \draw[line width=1.5pt, color=red] (0,0)--(0,3) (0,3)--(2,3) (2,3)--(2,4) (2,4)--(4,4);
    \draw[line width=1.5pt, color=blue] (2,2)--(2,3)
    (2,3)--(3,3);
    \draw[line width=.5pt, dashed] (0,0)--(4,4);
    \node () at (-.3, 0.5) {\textcolor{red}{$b_1$}};
    \node () at (-.3, 1.5) {\textcolor{red}{$b_2$}};
    \node () at (-.3, 2.5) {\textcolor{red}{$b_3$}};
    \draw  node[fill,circle,scale=0.5,color=red] at (0,0) {};
    \draw  node[fill,circle,scale=0.5,color=black] at (1,1) {};
    \draw  node[fill,circle,scale=0.5,color=blue] at (2,2) {};
    \node () at (1.7, 3.5) {\textcolor{red}{$b_4$}};
    \node () at (1.7, 2.5) {\textcolor{blue}{$b_5$}};
    \end{tikzpicture}    

    \caption{From $T\in \mathcal{T}(\lambda)$ to the nested labeled Dyck paths. }
    \label{fig:enter-label}
\end{figure}
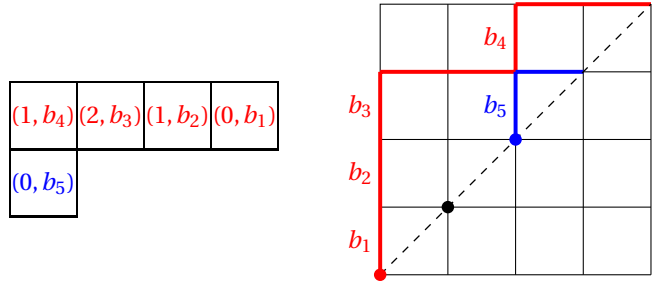

\subsection{Jacobi--Trudi type formula for the Loehr--Warrington formula} 
We provide a Jacobi--Trudi type formula for $\LW_{\lambda}$ in terms of the operators $\mathfrak{h}_m, \bar{\mathfrak{h}}_m$, and $\hat{\mathfrak{h}}_m$, defined below.
\begin{definition}\label{def: operator definition}
We define $\mathbf{C}_m$ to be a set of all $\PP$-chains $\{(a_1,b_1) \prec_{\PP} (a_2,b_2)\prec_{\PP} \dots \prec_{\PP} (a_m,b_m)\}$ where each $(a_\ell,b_\ell) \in \mathbb{Z}_{\geq0} \times \mathbb{Z}_{\geq1}$. Then $\bar{\mathbf{C}}_m$ (resp $\hat{\mathbf{C}}_m$) is a subset of $\mathbf{C}_m$ consisting of elements satisfying $a_1=0$ (resp $a_1>0$). Obviously, $\mathbf{C}_m=\bar{\mathbf{C}}_m \cup\mkern-11.5mu\cdot\mkern5mu \hspace{0.05cm}\hat{\mathbf{C}}_m$.

Let $\textbf{y}=\{y_{i,j}\}_{i \in \mathbb{Z}_{\geq 0}, j \in \mathbb{Z}_{\geq 1}}$ be a set of indeterminates . For a multiset $A$ of elements in $\mathbb{Z}_{\geq 0} \times \mathbb{Z}_{\geq 1}$, let $y^A$ be the monomial whose exponent of $y_{i,j}$ is given by the number of elements $(i,j)$ in $A$. We define the operators $\mathfrak{h}_m$, $\bar{\mathfrak{h}}_m$, and $\hat{\mathfrak{h}}_m$ acting on a polynomial ring $\mathbb{F}[\textbf{y}]$ ($\mathbb{F}$ is a ground field containing $\mathbb{C}(q)$) by describing their actions on a monomial as follows:
\begin{align*}
\mathfrak{h}_m \cdot y^A = \sum_{L \in \mathbf{C}_m} q^{\dinv(L,A)} y^{(L,A)}, \quad \
\bar{\mathfrak{h}}_m \cdot y^A = \sum_{L \in \bar{\mathbf{C}}_m} q^{\dinv(L,A)} y^{(L,A)}, \quad \
\hat{\mathfrak{h}}_m \cdot y^A = \sum_{L \in \hat{\mathbf{C}}_m} q^{\dinv(L,A)} y^{(L,A)},
\end{align*}
where $y^{(L,A)}=y^L y^A$. We then extend this linearly to define operators $\mathfrak{h}_n, \bar{h}_n$, and $\hat{\mathfrak{h}}_n$. If $m<0$ then they are all zero operators. 
\end{definition}

For example, we have
\begin{equation*}
     \hat{\mathfrak{h}}_2 \bar{\mathfrak{h}}_3 \mathfrak{h}_4 \cdot 1=\sum_{L\in \hat{\mathbf{C}}_2\times\bar{\mathbf{C}}_3 \times \mathbf{C}_4} q^{\dinv(L)}y^L
\end{equation*}
where $y^L=y^{L_1}y^{L_2}y^{L_3}$ for $L=(L_1,L_2,L_3)$.

We say two operators are the same if they act on $\mathbb{F}[\textbf{y}]$ in the same way. For example, $\mathfrak{h}_m = \hat{\mathfrak{h}}_m + \bar{\mathfrak{h}}_m$, which is trivial from $\mathbf{C}_m=\bar{\mathbf{C}}_m \cup\mkern-11.5mu\cdot\mkern5mu \hspace{0.05cm}\hat{\mathbf{C}}_m$.

The operators \( \mathfrak{h}_n \), \(\bar{\mathfrak{h}}_n\), and \(\hat{\mathfrak{h}}_n\) do not generally commute (for example, see Lemma \ref{lem: multiplication by q}). Nevertheless, operators of the same type do commute (Corollary \ref{cor: commute}). The following lemma, which is a step for proving Corollary \ref{cor: commute}, is inspired by the symmetry of chromatic quasisymmetric functions. Though the proof parallels \cite[Lemma 4.4, Theorem 4.5]{SW16} using an involution on proper colorings (or \( P \)-tabloids), we provide a proof for the sake of completeness.
\begin{lem}\label{lem: SW involution} For nonnegative integers \( n, m \), there exists a bijection 
\[
\SW: \mathbf{C}_n \times \mathbf{C}_m \rightarrow \mathbf{C}_m \times \mathbf{C}_n
\]
such that for $L\in \mathbf{C}_n \times \mathbf{C}_m$, $y^L=y^{\SW(L)}$ and $\dinv(L)=\dinv(\SW(L))$.
\end{lem}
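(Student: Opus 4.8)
The plan is to reproduce, for our poset $\PP$, the involution of Shareshian and Wachs used to prove symmetry of chromatic quasisymmetric functions of natural unit interval orders \cite{SW16}. Write $U=L_1\sqcup L_2$ for the disjoint union (a cell lying in both $L_1$ and $L_2$ yields two distinct elements of $U$), and let $\Gamma$ be the graph on $U$ with an edge between an element of $L_1$ and an element of $L_2$ precisely when the two cells are incomparable in $\PP$. Then $L_1,L_2$ are the color classes of a proper $2$-coloring of the bipartite graph $\Gamma$, of sizes $n$ and $m$, and $y^L$ depends only on the cells. Since $<_\lex$ is a linear extension of $\PP$, for $u\in L_1$ and $v\in L_2$ the relation $u\nprec_\PP v$ holds iff $\{u,v\}\in E(\Gamma)$ or $v\prec_\PP u$, and the second possibility contradicts $u<_\lex v$; hence
\[
\dinv(L)\;=\;\dinv(L_1,L_2)\;=\;\#\bigl\{\,\{u,v\}\in E(\Gamma)\ :\ u\in L_1,\ v\in L_2,\ u<_\lex v\,\bigr\}.
\]

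The second step records the structure of $\Gamma$. As $\PP$ is $(3+1)$- and $(2+2)$-free, its incomparability graph is claw-free and chordal (indeed an interval graph); $\Gamma$ inherits these properties, and being bipartite it thus has maximum degree $\le 2$ and no induced cycle, so $\Gamma$ is a disjoint union of induced paths (the analogue of \cite[Lemma 4.4]{SW16}; for our explicit $\PP$ it is also a quick case analysis on the at most three diagonals meeting a given cell, which in particular shows that the two elements of $U$ coming from a cell of $L_1\cap L_2$ are isolated). Orient each path component $C$ so that its vertices, listed along the path as $x_1,x_2,\dots,x_\ell$, satisfy $x_1<_\lex x_2<_\lex\cdots<_\lex x_\ell$; this is possible because $<_\lex$ has no interior local extremum along $C$. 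Indeed, natural unit interval orders satisfy the property that $a<_\lex b<_\lex c$ with $a,c$ incomparable forces $a,b$ and $b,c$ incomparable (a one-line check for our $\PP$), and applying it at a putative extremum $x_{s-1},x_s,x_{s+1}$ would make the cells of $x_{s-1}$ and $x_{s+1}$ incomparable, which is impossible as they lie in a common chain $L_i$. Since the coloring alternates along $C$, the component has exactly two proper $2$-colorings, interchanged by \emph{flipping} $C$, i.e. moving each $x_i$ to the other class.

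Now let $\SW(L)$ be obtained from $L$ by flipping every path component of $\Gamma$ of \emph{odd} size and leaving the even ones fixed. I then check: (i) $y^{\SW(L)}=y^L$, as $U$ is unchanged. (ii) Each color class of $\SW(L)$ is again a $\PP$-chain: inside a component it is an independent set of an induced path, elements from different components of $\Gamma$ have pairwise comparable cells, and repeated cells cannot arise because the two copies of a cell of $L_1\cap L_2$ are isolated and hence are flipped together, ending in opposite classes as before. (iii) $\dinv$ is preserved: write $\dinv(L)=\sum_C D(C)$, where $D(C)$ is the number of edges $\{x_s,x_{s+1}\}$ of $C$ whose $\lex$-smaller endpoint $x_s$ lies in $L_1$; a one-line count gives $D(C)=\lceil(\ell_C-1)/2\rceil$ if $x_1\in L_1$ and $D(C)=\lfloor(\ell_C-1)/2\rfloor$ if $x_1\in L_2$, so $D(C)$ is unaffected by flipping $C$ exactly when $\ell_C$ is odd -- precisely the components we flip. (iv) The class sizes swap: for even $C$ one has $|C\cap L_1|=|C\cap L_2|=\ell_C/2$ (untouched), while for odd $C$ flipping interchanges $|C\cap L_1|$ and $|C\cap L_2|$; hence the first class of $\SW(L)$ has size $\sum_C|C\cap L_2|=m$ and the second $n$, so $\SW(L)\in\mathbf{C}_m\times\mathbf{C}_n$. (v) $\SW$ is self-inverse, since $\Gamma$ and the parities $\ell_C$ depend only on the cell data of the input and flipping an odd component twice is the identity; thus the same recipe defines the inverse $\mathbf{C}_m\times\mathbf{C}_n\to\mathbf{C}_n\times\mathbf{C}_m$.

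The main obstacle is the second step -- showing $\Gamma$ is a union of induced paths and, above all, that $<_\lex$ increases along each of them; once that holds, identity (iii) is a parity count and (i), (ii), (iv), (v) are bookkeeping. A minor but essential point is that the general tuple statistic $\dinv$ really does reduce, for two columns, to the clean edge count of the first step: this is exactly why rewriting $\dinv$ through $<_\lex$, rather than through the raw coordinate conditions in its definition, is the right move.
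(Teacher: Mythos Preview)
Your argument is correct and follows essentially the same Shareshian--Wachs path-flipping strategy as the paper: build the incomparability graph between $L_1$ and $L_2$, show its components are $\lex$-monotone paths with alternating colors, and flip the appropriate components. One point worth noting: you flip the \emph{odd}-length paths, which is the right choice (it swaps the class sizes while leaving the per-path $\dinv$ contribution unchanged), whereas the paper as written flips the even ones---your version is the correct parity.
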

\begin{proof}
Let \(L=(L_1, L_2) \in \mathbf{C}_n \times \mathbf{C}_m \). Consider a directed graph \( G(L) \) on the elements of \( L_1 \cup L_2 \) (with possible repeated entries), where edges are defined as \((a, b) \rightarrow (c, d)\) for pairs satisfying \((a, b) <_{\text{lex}} (c, d) <_{\text{lex}} (a+1, b)\). Or equivalently, when \((a, b) <_{\text{lex}} (c, d)\) and \( (a,b) \nprec_{\PP} (c, d)\). It is straightforward to see that \( G(L) \) is bipartite since elements in \( L_1 \) are not connected to each other, and the same applies to elements in \( L_2 \). 

We claim that every connected component of \( G(L) \) is a directed path of the form \((a_1, b_1) <_{\text{lex}} (a_2, b_2) <_{\text{lex}} \cdots <_{\text{lex}} (a_j, b_j)\), where the membership of \((a_i, b_i)\) alternates between \( L_1 \) and \( L_2 \). To prove the claim, we first show that each vertex has an out-degree at most one. Suppose not. Without loss of generality, assume \((a, b) \in L_1\) has out-degree more than one. Then there exist \((c, d), (c', d') \in L_2\) such that \((a, b) \rightarrow (c, d)\) and \((a, b) \rightarrow (c', d')\). Therefore, we have \((a, b) <_{\text{lex}} (c, d) <_{\text{lex}} (a+1,b)\) and \((a, b) <_{\text{lex}} (c', d')<_{\text{lex}} (a+1,b)\) This contradicts the fact that \((c, d)\in L_2\) and \((c', d')\in L_2\) are comparable in $\PP$. Similarly, each vertex has an in-degree at most one. Finally, there cannot be a directed cycle because every directed edge is increasing in terms of the \( <_{\text{lex}} \) ordering. Combining all these proves the claim.

For a connected component (path) \( P \) of \( G(L) \), let 
\[
    \SW(P) := \begin{cases}
    (P_1, P_2) \quad \text{ if } P \text{ has an odd number of vertices,}\\
    (P_2, P_1) \quad \text{ if } P \text{ has an even number of vertices},
    \end{cases}
\]
where \( P_1 = P \cap L_1 \) and \( P_2 = P \cap L_2 \). Then define
\(
    \SW(L) := (\cup_{P} \SW(P)_1, \cup_{P} \SW(P)_2), 
\)
where the union runs over all connected components of \( G(L) \). The map $\SW$ does not change the elements in $L_1\cup L_2$, thus $y^L=y^{\SW(L)}$. It is clear to see that \( \SW(L) \in \mathbf{C}_m \times \mathbf{C}_n \) and that $\SW$ is involutive. Finally, note that \( \dinv \) counts the number of directed edges from an element in \( L_1 \) to \( L_2 \). For each connected component \( P \) in \( G(L) \), this number is preserved by \( \SW \). Thus total \( \dinv \) statistic of $L$ is preserved under the involution \( \SW \).
\end{proof}

\begin{corollary}\label{cor: commute}
    We have $\mathfrak{h}_n \mathfrak{h}_m=\mathfrak{h}_m \mathfrak{h}_n$, $\bar{\mathfrak{h}}_n \bar{\mathfrak{h}}_m=\bar{\mathfrak{h}}_m \bar{\mathfrak{h}}_n$, and $\hat{\mathfrak{h}}_n \hat{\mathfrak{h}}_m=\hat{\mathfrak{h}}_m \hat{\mathfrak{h}}_n$. 
\end{corollary}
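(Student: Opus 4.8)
The plan is to reduce the three commutation statements to repeated applications of Lemma~\ref{lem: SW involution}. Recall that each of the operators $\mathfrak{h}_n$, $\bar{\mathfrak{h}}_n$, $\hat{\mathfrak{h}}_n$ acts on a monomial $y^A$ by summing $q^{\dinv(L,A)}y^{(L,A)}$ over $\PP$-chains $L$ of length $n$ lying in the appropriate set ($\mathbf{C}_n$, $\bar{\mathbf{C}}_n$, or $\hat{\mathbf{C}}_n$). Composing two such operators therefore produces, on $y^A$, a sum over pairs $(L_1,L_2)$ — with $L_1$ in the set for the outer operator and $L_2$ in the set for the inner operator — weighted by $q^{\dinv(L_1,L_2,A)}y^{(L_1,L_2,A)}$. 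So to prove $\mathfrak{h}_n\mathfrak{h}_m = \mathfrak{h}_m\mathfrak{h}_n$ it suffices to produce, for each fixed monomial $y^A$, a bijection between pairs $(L_1,L_2)\in\mathbf{C}_n\times\mathbf{C}_m$ and pairs $(L_1',L_2')\in\mathbf{C}_m\times\mathbf{C}_n$ that preserves the combined content $y^{L_1}y^{L_2}$ and the full statistic $\dinv(L_1,L_2,A)$.

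The first step is to observe that $\SW$ from Lemma~\ref{lem: SW involution} does exactly this on the first two slots and is harmless on the third. Explicitly, extend $\SW$ to triples by letting it act as the identity on $A$: define $\widetilde{\SW}(L_1,L_2,A) = (\SW(L_1,L_2),A)$. Since $\SW$ preserves the underlying multiset $L_1\cup L_2$, we have $y^{L_1}y^{L_2} = y^{L_1'}y^{L_2'}$, so the total content including $A$ is unchanged. For the statistic, expand
\[
    \dinv(L_1,L_2,A) = \dinv(L_1,L_2) + \dinv(L_1,A) + \dinv(L_2,A),
\]
where $\dinv(L_i,A)$ counts lex-increasing, $\PP$-incomparable pairs with first entry in $L_i$ and second in $A$. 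The term $\dinv(L_1,L_2)$ is preserved by Lemma~\ref{lem: SW involution}. For the remaining two terms, note that $\dinv(L_1,A)+\dinv(L_2,A)$ counts lex-increasing $\PP$-incomparable pairs $(x,a)$ with $x\in L_1\cup L_2$ and $a\in A$; since $\SW$ does not move any element of $L_1\cup L_2$, this count is literally the same before and after. Hence $\dinv$ is preserved on the full triple, proving $\mathfrak{h}_n\mathfrak{h}_m = \mathfrak{h}_m\mathfrak{h}_n$.

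For the barred and hatted statements I would check that $\SW$ respects the extra constraint $a_1 = 0$ (resp.\ $a_1 > 0$) on each chain. This is where one must look a little more carefully at the structure of the connected components in the proof of Lemma~\ref{lem: SW involution}: a component is a lex-increasing directed path whose vertices alternate between $L_1$ and $L_2$, and $\SW$ either leaves such a path fixed (odd length) or swaps which side it contributes to (even length). The minimal element of the whole chain $L_i$ — the one carrying the $a_1$ coordinate — is the lex-smallest element of $L_i$; I would argue that within each connected component the lex-smallest vertex is the path's source, and that whether a chain $L_i$ is "barred" ($\min$ has first coordinate $0$) is determined componentwise in a way that $\SW$ respects, because $\SW$ never changes the set of vertices, only their $L_1$/$L_2$ labels, and it swaps labels along an entire component at once. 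Concretely: the lex-minimum of $\bar{\mathbf{C}}$-chains forces its first coordinate to be $0$; I need that this minimum stays on the correct side after $\SW$. I expect this bookkeeping about which side owns the global minimum to be the main (though still routine) obstacle; once it is settled, $\SW$ restricts to bijections $\bar{\mathbf{C}}_n\times\bar{\mathbf{C}}_m \to \bar{\mathbf{C}}_m\times\bar{\mathbf{C}}_n$ and $\hat{\mathbf{C}}_n\times\hat{\mathbf{C}}_m \to \hat{\mathbf{C}}_m\times\hat{\mathbf{C}}_n$, and the same content/$\dinv$-preservation argument as above finishes all three claims.
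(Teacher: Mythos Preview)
Your approach is exactly the paper's: apply $\SW$ from Lemma~\ref{lem: SW involution} to the first two slots, observe that the contribution of the third slot $A$ to $\dinv$ depends only on the multiset $L_1\cup L_2$ (which $\SW$ preserves), and then check that $\SW$ restricts to $\bar{\mathbf{C}}_n\times\bar{\mathbf{C}}_m$ and $\hat{\mathbf{C}}_n\times\hat{\mathbf{C}}_m$. The paper's proof says no more than ``it is easy to check'' for this last restriction.

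Your hedged componentwise argument for the restriction can be replaced by a one-line observation that removes all the bookkeeping you are worried about. In any $\PP$-chain the first coordinates are strictly increasing, so a chain contains \emph{at most one} element with first coordinate $0$. Hence membership in $\hat{\mathbf{C}}$ is the condition ``no element has first coordinate $0$,'' which depends only on the underlying multiset and is therefore preserved by $\SW$. For $\bar{\mathbf{C}}$, each input chain has exactly one element with first coordinate $0$, giving two in total; since $\SW(L)$ is again a pair of $\PP$-chains, each output chain can hold at most one of them, forcing exactly one per side. No tracking of path sources or parities is needed.
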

\begin{proof}
The map $\SW$ in Lemma \ref{lem: SW involution} directly explains that $\mathfrak{h}_n \mathfrak{h}_m=\mathfrak{h}_m \mathfrak{h}_n$ and it is easy to check that $\SW$ induces bijections $\bar{\mathbf{C}}_n \times \bar{\mathbf{C}}_m\rightarrow \bar{\mathbf{C}}_m \times \bar{\mathbf{C}}_n$ and $\hat{\mathbf{C}}_n \times \hat{\mathbf{C}}_m\rightarrow \hat{\mathbf{C}}_m \times \hat{\mathbf{C}}_n$. We conclude $\bar{\mathfrak{h}}_n \bar{\mathfrak{h}}_m=\bar{\mathfrak{h}}_m \bar{\mathfrak{h}}_n$ and $\hat{\mathfrak{h}}_n \hat{\mathfrak{h}}_m=\hat{\mathfrak{h}}_m \hat{\mathfrak{h}}_n$.
\end{proof}

Consider an $n$ by $n$ square matrix $W=(W_{i,j})$ whose entry $W_{i,j}$ is an operator acting on $\mathbb{F}[\textbf{y}]$. We let $\det(W)$ to be the operator defined by
\begin{equation*}
    \det(W)=\sum_{\sigma\in \mathfrak{S}_n}(-1)^{\sgn(\sigma)}W_{\sigma(1),1}W_{\sigma(2),2}\dots W_{\sigma(n),n}. 
\end{equation*}
For example, we have $\det\begin{pmatrix}
\mathfrak{h}_2 & \mathfrak{h}_3 \\
\mathfrak{h}_4 & \mathfrak{h}_5 
\end{pmatrix}=\mathfrak{h}_2 \mathfrak{h}_5-\mathfrak{h}_4 \mathfrak{h}_3$. Upon this notation, we provide a Jacobi--Trudi type formula for the Loehr--Warrington formula $\LW_{\lambda}$.

\begin{proposition}[Jacobi-Trudi type formula]\label{prop: JT formula}
For integers $k<n$ and a partition $\lambda \subseteq R(n,k)$ we associate an $n$ by $n$ square matrix $W(\lambda)$
\begin{align*}
    W(\lambda)_{i,j}=\begin{cases*}
        \mathfrak{h}_{v(\lambda)_j-i+1} \qquad \text{if $j\le k-s$}\\
        \bar{\mathfrak{h}}_{v(\lambda)_j-i+1} \qquad \text{if $j> k-s$ and $j$ is an entry of $\piv(\lambda)$}\\
        \hat{\mathfrak{h}}_{v(\lambda)_j-i+1} \qquad \text{if $j> k-s$ and $j$ is not an entry of $\piv(\lambda)$},
    \end{cases*}
\end{align*}
where $s$ is the size of the Durfee square of $\lambda$. Then we have 
\begin{equation*}
    q^{-\adj(\lambda)} \LW_\lambda = \det(W(\lambda)) \cdot 1 \vert_{y_{i,j}=t^i x_j}
\end{equation*}
\end{proposition}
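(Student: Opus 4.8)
The plan is to expand the determinant $\det(W(\lambda))$ into a signed sum of products of operators, apply this to $1$, specialize $y_{i,j} = t^i x_j$, and then identify the result with the Loehr--Warrington formula $\LW_\lambda$ via a sign-reversing involution that cancels all but the ``valid'' terms. First I would unwind the definition: by the definition of $\det(W)$ and the action of the operators $\mathfrak{h}_m,\bar{\mathfrak{h}}_m,\hat{\mathfrak{h}}_m$ on monomials, $\det(W(\lambda))\cdot 1$ is a sum over permutations $\sigma \in \mathfrak{S}_n$ and over tuples $L = (L_1,\dots,L_n)$ where $L_j$ is a $\PP$-chain of length $v(\lambda)_j - \sigma(j) + 1$ (subject to the $a_1=0$ constraint when $j$ is a pivot column, $a_1>0$ when $j>k-s$ is not a pivot column), weighted by $(-1)^{\sgn(\sigma)} q^{\dinv(L)} y^{L}$. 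Reading $L_j$ as the $j$-th column of a filling $T$ of a diagram whose $j$-th column has $v(\lambda)_j - \sigma(j) + 1$ cells, the chain condition $\prec_{\PP}$ becomes exactly the first tableau condition $T(i+1,j)\succ_{\PP}T(i,j)$, and the specialization $y_{i,j}=t^i x_j$ converts $q^{\dinv(L)}y^L$ into $q^{\dinv(T)}t^{\area(T)}x^T$ up to a global shift. So the content of the proposition is: summing over $\sigma$ with signs, all contributions survive exactly when $\sigma = \mathrm{id}$ (giving column lengths $v(\lambda)_j - j + 1 = \bo(\lambda)_j$, i.e.\ the diagram $D(\lambda)$) together with the second tableau condition $T(i,j-1)\nsucc_{\PP}T(i,j)$, and the overall power of $q$ is $\adj(\lambda)$.

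Next I would set up the cancellation. The key is a sign-reversing involution on pairs $(\sigma, T)$ where $T$ violates either the requirement that column lengths be nonnegative or the ``no-attack across adjacent columns'' condition $T(i,j-1)\nsucc_{\PP}T(i,j)$. This is the standard Lindström--Gessel--Viennot / Bender--Knuth style argument for Jacobi--Trudi identities: given the leftmost adjacent pair of columns $j-1,j$ where $T(i,j-1)\succ_{\PP}T(i,j)$ for some $i$, one swaps the tails of columns $j-1$ and $j$ below the offending cell, which corresponds to composing $\sigma$ with the transposition $(j-1\ j)$ and hence flips the sign; the commutativity of operators of the same type (Corollary~\ref{cor: commute}) and the fact that $\SW$ from Lemma~\ref{lem: SW involution} preserves $\dinv$ and $y^L$ guarantee that this swap preserves the $q$-weight and monomial, while the $\PP$-chain structure inside each column is maintained because $\SW$ maps $\mathbf{C}_a\times\mathbf{C}_b$ to $\mathbf{C}_b\times\mathbf{C}_a$ (and likewise for the barred/hatted variants, which is why the pivot/non-pivot column types are preserved under the swap). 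The fixed points of this involution are precisely the tuples with $\sigma=\mathrm{id}$ and $T$ satisfying both tableau conditions, i.e.\ $T\in\mathcal{T}(\lambda)$ — here I also need to check that for a pivot column $j>k-s$ the barred operator forces $T(\bo(\lambda)_j,j)_1 = 0$ and for a non-pivot column the hatted operator forces $T(\bo(\lambda)_j,j)_1>0$, matching the last two conditions defining $\mathcal{T}(\lambda)$, and that the columns $j\le k-s$ with the plain $\mathfrak{h}$ impose no bottom-cell constraint.

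Finally I would track the power of $q$. The $\dinv$ statistic of the filling $T$ as defined via $\dinv(L_1,\dots,L_n)$ matches $\dinv(T)$ directly, so the only discrepancy between $\det(W(\lambda))\cdot 1|_{y_{i,j}=t^i x_j}$ and $\LW_\lambda = q^{\adj(\lambda)}\sum_{T\in\mathcal{T}(\lambda)}q^{\dinv(T)}t^{\area(T)}x^T$ is the prefactor $q^{\adj(\lambda)}$, which I would verify by a direct bookkeeping argument: $\adj(\lambda) = \sum_{i=1}^s(\lambda_i - i)$ is exactly the number of ``descents'' built into the vector $v(\lambda)$ at the pivot positions, and I expect it arises as a constant $\dinv$-contribution that is independent of $T$ but was absorbed when passing from the operator-defined $\dinv$ on chains (which starts counting from $a_1\ge 0$ freely) to the normalized $\dinv$ on $\mathcal{T}(\lambda)$; alternatively it can be pinned down by comparing with the nested-Dyck-path description in Remark~\ref{remark: LW original to ours}. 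I expect the main obstacle to be the second part: making the swap-of-column-tails involution genuinely well-defined and sign-reversing in the presence of the three different operator types $\mathfrak{h},\bar{\mathfrak{h}},\hat{\mathfrak{h}}$ — one must confirm that when the offending pair $(j-1,j)$ involves columns of different types (e.g.\ a plain column adjacent to a hatted column), swapping tails still lands in a valid configuration and the sign genuinely flips, which requires a careful case analysis of how the bottom-cell constraints interact with the swap, and possibly a slightly more clever choice of which cell to swap at. The commutativity corollary and the structure-preserving property of $\SW$ are the tools that make this go through, but assembling them into a clean single involution is the delicate point.
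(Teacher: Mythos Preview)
Your approach is essentially the paper's: expand the determinant over $\sigma\in\mathfrak{S}_n$, interpret each summand as a sum over column tuples of $\PP$-chains (equivalently, fillings $A$ of a $\sigma$-dependent diagram), and cancel everything except $(\sigma,T)=(\mathrm{id},T)$ with $T\in\mathcal{T}(\lambda)$ via a sign-reversing involution built from the map $\SW$ of Lemma~\ref{lem: SW involution}.

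Two points deserve correction. First, there is no $q^{\adj(\lambda)}$ bookkeeping to do: the statement is $q^{-\adj(\lambda)}\LW_\lambda=\det(W(\lambda))\cdot 1\big|_{y_{i,j}=t^i x_j}$, and since $\LW_\lambda=q^{\adj(\lambda)}\sum_T q^{\dinv(T)}t^{\area(T)}x^T$, the claim is exactly $\det(W(\lambda))\cdot 1=\sum_{T\in\mathcal{T}(\lambda)}q^{\dinv(T)}y^T$; the adjustment cancels and no hidden $\dinv$-shift appears. Second, your anticipated ``main obstacle'' is a red herring: the column type ($\mathfrak{h}$, $\bar{\mathfrak{h}}$, or $\hat{\mathfrak{h}}$) is a function of the column index $j$ alone, not of $\sigma(j)$, and the involution only swaps the portions of columns $c-1$ and $c$ \emph{above} the bad cell (the paper applies $\SW$ to $S_{c-1}(A)=\{A(i,c-1):i\ge r\}$ and $S_c(A)=\{A(i,c):i>r\}$), leaving the bottom entries---hence the bar/hat constraints---untouched. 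The genuinely delicate verification is not cross-type compatibility but rather that after the swap each column is still a $\PP$-chain at the seam, i.e.\ that $A'(r-1,c-1)\prec_{\PP}A'(r,c-1)$ and $A'(r,c)\prec_{\PP}A'(r+1,c)$; this uses the minimality of the bad row $r$ and a short incomparability argument, and is the one place where care is needed.
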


\begin{proof}
For a permutation \(\sigma \in \mathfrak{S}_n\), let \(D(\sigma)\) be the diagram 
\[
    D(\sigma) = [[\bo(\lambda)_1, s+1-\sigma(1)], [\bo(\lambda)_2, s+2-\sigma(2)], \dots, [\bo(\lambda)_n, s+n-\sigma(n)]].
\]
Note that \(D(\text{id}) = D(\lambda)\). Let \(C_\sigma\) be the set of fillings \(A\) of $D(\sigma)$ with elements in $\mathbb{Z}_{\ge 0}\times \mathbb{Z}_{\ge1}$ such that
\begin{itemize}
    \item \(A(i+1, j) \succ_{\PP} A(i, j)\),
    \item for each \(j > k-s\), if \(j \in \piv(\lambda)\), we have \(A(\bo(\lambda)_j, j)_1 > 0\),
    \item for each \(j > k-s\), if \(j \notin \piv(\lambda)\), we have \(A(\bo(\lambda)_j, j)_1 = 0\).
\end{itemize}
If \(s+i-\sigma(i)<\bo(\lambda)_i\) for some $i$, we consider that there is no filling of $D(\sigma)$.

By definition, it is straightforward to see that
\begin{equation}\label{eq: det = P-tabloid}
    \det(W(\lambda)) \cdot 1 = \sum_{\sigma \in \mathfrak{S}_n} (-1)^{\sgn(\sigma)} \prod_{i=1}^n W(\lambda)_{\sigma(i), i} \cdot 1 = \sum_{\sigma \in \mathfrak{S}_n} (-1)^{\sgn(\sigma)} \sum_{A \in C_\sigma} q^{\dinv(A)} y^A.
\end{equation}

We construct a sign-reversing involution \(\widetilde{\SW}\) on the set
\[
    B_\lambda := \{(A, \sigma) : \sigma \in \mathfrak{S}_n, A \in C_\sigma\} \setminus \{(T,\text{id}): T\in \mathcal{T}(\lambda)\}
\]
such that
\[
    \sgn(\sigma) = -\sgn(\sigma'), \qquad y^A = y^{A'}, \qquad \text{and} \qquad \dinv(A) = \dinv(A'),
\]
where we denoted \((A',\sigma') = \widetilde{\SW}(A,\sigma)\). Via such involution, all terms in \eqref{eq: det = P-tabloid} except for the ones from $\{(T,\text{id}): T\in \mathcal{T}(\lambda)\}$ cancels. This implies that we have 
\[
    \det(W(\lambda)) \cdot 1 = \sum_{T\in \mathcal{T}(\lambda)}q^{\dinv(T)}y^T,
\]
which completes the proof.

We proceed to construct \(\widetilde{\SW}\). For a given \((A, \sigma) \in B_\lambda\), a cell \((i, j)\) in \(D(\sigma)\) is called \emph{bad} in \(A\) if either of the following conditions holds:
\begin{itemize}
    \item \((i, j-1)\) is empty with \(i \geq \bo(\lambda)_{j-1}\),
    \item \(A(i, j-1) \succ_{\PP} A(i, j)\).
\end{itemize}

For \((A, \sigma) \in B_\lambda\), there must be at least one bad cell. Let \(r = r(A)\) be the smallest \(i\) such that there is a bad cell \((i, j)\). Once \(r\) is determined, let \(c\) be the largest \(j\) such that the cell \((r, j)\) is bad. Define the sets
\[
S_{c-1}(A) = \{A(i, c-1) : r \leq i \leq s + (c-1) - \sigma(c-1)\}, \quad \text{and} \quad 
S_{c}(A) = \{A(i, c) : r < i \leq s + c - \sigma(c)\}.
\]

We then define \(\widetilde{\SW}(A, \sigma) := (A', \sigma(c-1, c))\), where \(A'\) is obtained from \(A\) by applying the map \(\SW\) (as defined in Lemma~\ref{lem: SW involution}) to \((S_{c-1}(A), S_{c}(A))\) while leaving other parts unchanged, and \((c-1, c)\) is the transposition swapping \(c-1\) and \(c\). We show that $A'$ is in $C_{\sigma'}$ where $\sigma'=\sigma(c-1, c)$. It suffices to show $A'(r-1,c-1)\prec_{\PP}A'(r,c-1)$ and $A'(r,c)\prec_{\PP}A'(r+1,c)$ so that each column is still \(\PP\)-chain. Note that $A'(r,c)=A(r,c)$ and $A'(r+1,c)$ equals $A(r+1,c)$ or $A(r,c-1)$, implying $A'(r,c)\prec_{\PP}A'(r+1,c)$. If  \(A(r-1,c-1) \nprec_{\PP} A(r+1,c)\) , together with $A(r-1, c) \nprec_{\PP} A(r-1,c-1)$ from the minimality of $r$, we have
\begin{align*}
&A(r-1, c) \prec_{\PP} A(r,c) \prec_{\PP} A(r+1, c)\\
&A(r-1, c) \nprec_{\PP} A(r-1,c-1) \nprec_{\PP} A(r+1, c)
\end{align*}
which is impossible. Since  $A'(r-1,c-1)=A(r-1,c-1)$ and $A'(r,c-1)$ equals $A(r,c-1)$ or $A(r+1,c)$, we conclude $A'(r-1,c-1)\prec_{\PP}A'(r,c-1)$. 

It is straightforward to see that \((r, c)\) remains a bad cell in $A'$,  whose minimality is trivial. This guarantees that the map \(\widetilde{\SW}\) is indeed an involution. Moreover, it is evident that the map \(\widetilde{\SW}\) is sign-reversing and preserves \(y^A\) and \(\dinv(A)\).

\end{proof}

\subsection{Reformulation of Loehr--Warrington formula}
We reformulate the Loehr--Warrington formula \(\LW_{\lambda}\) from Proposition \ref{prop: JT formula}. Our main tool is Lemma \ref{lem: multiplication by q}, whose proof will be provided in Section~\ref{subsec: proof of Lemma 4.10}.  For a matrix \( A \), let \( T_{ij}(A) \) be the matrix obtained by moving the \( j \)-th column to the position of the \( i \)-th column and shifting the \( \ell \)-th column to the \( (\ell+1) \)-th position for \( i < \ell < j \). For example, we have
\begin{equation*}
A = \begin{pmatrix}
    \vertbar & \vertbar &   \vertbar &  \vertbar  & \vertbar \\
    c_{1}    & c_{2}    &    c_3    & c_{4}  & c_5  \\
    \vertbar & \vertbar &    \vertbar &  \vertbar &\vertbar
  \end{pmatrix}
\longrightarrow T_{25} A = \begin{pmatrix}
    \vertbar & \vertbar &   \vertbar &  \vertbar  & \vertbar \\
    c_{1}    & c_{5}    &    c_2    & c_{3}  & c_4  \\
    \vertbar & \vertbar &    \vertbar &  \vertbar &\vertbar
  \end{pmatrix}.
\end{equation*}

\begin{lem}\label{lem: multiplication by q}
For an integer vector \( v = (v_1, v_2, \dots, v_n) \) of length \( n \geq 2 \), consider an \( n \times n \) matrix of operators \( V = (V_{i,j}) \) given by
\[
V_{i,j} = \begin{cases*}
\hat{\mathfrak{h}}_{v_i + j - 1} \quad \text{if \( j < n \)} \\
\bar{\mathfrak{h}}_{v_i + n - 1} \quad \text{if \( j = n \)}
\end{cases*}
\]
Then we have \( (-q)^{n-1} \det V = \det T_{1,n}(V) \).
\end{lem}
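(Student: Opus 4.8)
The plan is to exploit the very rigid shape of the matrices involved: both $V$ and $T_{1,n}(V)$ have exactly one column built from $\bar{\mathfrak{h}}$-operators — the last column of $V$ and the first column of $T_{1,n}(V)$ — and all their remaining $n-1$ columns are built from $\hat{\mathfrak{h}}$-operators. Write $m_i:=v_i+n-1$, and let $\hat V^{(i)}$ denote the $(n-1)\times(n-1)$ matrix obtained from $V$ by deleting row $i$ and the last column; its determinant is unambiguous because, by Corollary~\ref{cor: commute}, the $\hat{\mathfrak{h}}$-operators pairwise commute. Since the distinguished $\bar{\mathfrak{h}}$-column sits at one end of the column order, the operator determinants of $V$ and $T_{1,n}(V)$ admit genuine Laplace expansions along that column, and a re-indexing of columns identifies the minor of $T_{1,n}(V)$ obtained by deleting row $i$ and column $1$ with $\hat V^{(i)}$. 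This yields
\[
\det V = \sum_{i=1}^{n}(-1)^{n-i}\,\det(\hat V^{(i)})\,\bar{\mathfrak{h}}_{m_i},
\qquad
\det T_{1,n}(V) = \sum_{i=1}^{n}(-1)^{i-1}\,\bar{\mathfrak{h}}_{m_i}\,\det(\hat V^{(i)}).
\]
As $(-q)^{n-1}(-1)^{n-i}=(-1)^{i-1}q^{\,n-1}$, the claimed identity $(-q)^{n-1}\det V=\det T_{1,n}(V)$ is equivalent to
\[
\sum_{i=1}^{n}(-1)^{i-1}\Bigl(\bar{\mathfrak{h}}_{m_i}\,\det(\hat V^{(i)})\;-\;q^{\,n-1}\,\det(\hat V^{(i)})\,\bar{\mathfrak{h}}_{m_i}\Bigr)=0.
\]
The individual summands do \emph{not} vanish; for instance when $n=2$ this is the cross-term identity $\bar{\mathfrak{h}}_{v_1+1}\hat{\mathfrak{h}}_{v_2}-\bar{\mathfrak{h}}_{v_2+1}\hat{\mathfrak{h}}_{v_1}=-q\bigl(\hat{\mathfrak{h}}_{v_1}\bar{\mathfrak{h}}_{v_2+1}-\hat{\mathfrak{h}}_{v_2}\bar{\mathfrak{h}}_{v_1+1}\bigr)$, so the cancellation happens only in the full alternating sum.

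To prove that vanishing I would pass to the level of fillings, in the spirit of the proof of Proposition~\ref{prop: JT formula}. Expanding each $\det(\hat V^{(i)})$ over $\mathfrak{S}_{[n]\setminus\{i\}}$ and applying everything to $1$, the two sums above become signed sums over pairs $(\sigma,A)$ with $\sigma\in\mathfrak{S}_n$ and $A$ a configuration of $\PP$-chains filling the associated diagram: in one family a chain with $a_1=0$ precedes $n-1$ chains with $a_1>0$, while in the other family $n-1$ chains with $a_1>0$ precede a chain with $a_1=0$ and there is an extra global factor $q^{\,n-1}$. I would then construct a sign-reversing involution on the disjoint union of these two families that preserves the $y$-monomial and the statistic $\dinv$: the basic move takes the chain with $a_1=0$ together with the $\hat{\mathfrak{h}}$-chain adjacent to it and applies the bijection $\SW$ of Lemma~\ref{lem: SW involution} — which interchanges the two factors of $\mathbf{C}_n\times\mathbf{C}_m$ while preserving $y^{(\cdot)}$ and $\dinv$ — simultaneously performing the corresponding transposition of columns of $\sigma$, exactly as with $\widetilde{\SW}$ in Proposition~\ref{prop: JT formula}. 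Tracking how $\SW$ behaves on such a mixed pair, and in particular that sliding the $a_1=0$ chain across one $\hat{\mathfrak{h}}$-chain shifts its $\dinv$ contribution by exactly one, is what after $n-1$ crossings accounts for the compensating factor $q^{\,n-1}$, while the exceptional fixed points of $\widetilde{\SW}$ are paired with their counterparts in the other family.

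The hard part will be the precise design of this involution. Unlike in Proposition~\ref{prop: JT formula}, where the involution simply annihilated every non-identity permutation, nothing cancels outright here: the involution must pair each term of one family with a term of the other family carrying the correct power of $q$, and pair the residual terms inside a single family, and one must verify both that $\SW$ behaves compatibly on mixed chains with the one-step $\dinv$ shift and that all sign and $q$-power bookkeeping closes up. The case $n=2$ can be checked directly from the defining chain sums of $\mathfrak{h}_m,\bar{\mathfrak{h}}_m,\hat{\mathfrak{h}}_m$ and serves as a sanity check; note, however, that a naive induction on $n$ that slides the $\bar{\mathfrak{h}}$-column leftward one column at a time fails, because a single such column swap does not multiply the determinant by $-q$ on its own, so the global involution is genuinely required.
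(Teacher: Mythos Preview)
Your Laplace expansion is correct, and reducing the lemma to the identity
\[
\sum_{i=1}^{n}(-1)^{i-1}\Bigl(\bar{\mathfrak{h}}_{m_i}\det(\hat V^{(i)})-q^{\,n-1}\det(\hat V^{(i)})\,\bar{\mathfrak{h}}_{m_i}\Bigr)=0
\]
is a clean first step. But from there the proposal is only a plan, not a proof: you never actually construct the involution, and the heuristic that ``sliding the $a_1=0$ chain across one $\hat{\mathfrak{h}}$-chain shifts $\dinv$ by exactly one'' is not what $\SW$ does (it preserves $\dinv$), so the mechanism producing $q^{n-1}$ is not the one you describe. At best you have identified that a modified bijection in the style of $\SW$ is needed, which is indeed the content of the paper's base case, but the general construction is left entirely open.

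More importantly, your closing remark is factually wrong and causes you to discard the approach that actually works. A single adjacent swap of the $\bar{\mathfrak{h}}$-column with a neighboring $\hat{\mathfrak{h}}$-column \emph{does} multiply the determinant by $-q$: the paper defines $V^{(\ell)}$ with the $\bar{\mathfrak{h}}$-column in position $\ell$ and proves $\det(V^{(\ell)})=-q\det(V^{(\ell+1)})$ for each $\ell$, so that $\det T_{1,n}(V)=\det(V^{(1)})=(-q)^{n-1}\det(V^{(n)})=(-q)^{n-1}\det V$. The nontrivial input is an auxiliary identity (Lemma~\ref{lem: q multi middle}) for matrices whose first two columns are $\hat{\mathfrak{h}},\bar{\mathfrak{h}}$ or $\bar{\mathfrak{h}},\hat{\mathfrak{h}}$ and whose remaining columns are $\mathfrak{h}$'s; this is proved by induction on $n$, with the base case $n=2$ established by a variant $\tSW$ of the $\SW$ bijection that treats the unique element of the form $(0,*)$ specially and accounts for the single factor of $q$. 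A projection argument then converts the $\mathfrak{h}$-columns to $\hat{\mathfrak{h}}$-columns, yielding exactly the one-step identity $\det(V^{(\ell)})=-q\det(V^{(\ell+1)})$. So rather than a single global involution handling all $n-1$ crossings at once, the paper isolates the $2\times 2$ bijective core and propagates it by algebraic manipulations (column operations and minor expansions).
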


Now we modify the matrix \( W(\lambda) \) given in Proposition \ref{prop: JT formula}. Let \( s \) be the maximal number such that \(\lambda_s \geq s\). Set \( W^{(0)} = W(\lambda) \) and then construct \( W^{(1)}, \dots, W^{(s)} \) recursively as follows:
\begin{itemize}
    \item (Step 1) Given \( W^{(i)} \), let \( V^{(i+1)} = T_{ab}(W^{(i)}) \) where $a=k-s+i$ and $b=\piv(\lambda)_i$.
    \item (Step 2) Add the $(\piv(\lambda)_i+1)$-th column to the \((k-s+i)\)-th column in $V^{(i+1)}$ to obtain $W^{(i+1)}$.
\end{itemize}
By Lemma \ref{lem: multiplication by q}, after performing Step 1, the determinant multiplies by \( (-q)^{\piv(\lambda)_i - (k-s+i)} = (-q)^{\lambda_{s-i+1} - (s-i+1)} \). The property of a determinant together with Corollary \ref{cor: commute} implies that Step 2 does not change the determinant. We conclude that
\begin{equation}
    (-q)^{\adj(\lambda)}\det(W(\lambda))=\det(W^{(s)}).
\end{equation}
It is easy to check that \( W^{(s)} \) is given by
\begin{equation}\label{eq: Wr matrix formula}
    W^{(s)}_{i,j} = \begin{cases*}
        \mathfrak{h}_{\lambda_{k+1-j} + j - i} \quad \text{if \( j \leq k \)} \\
        \hat{\mathfrak{h}}_{j - i} \quad \text{if \( j > k \)}.
    \end{cases*}
\end{equation}
Finally, by Proposition~\ref{prop: JT formula}, we conclude
\begin{equation}\label{eq: lw formula final}
    \LW_{\lambda} = (-1)^{\adj(\lambda)}\det(W^{(s)}) \cdot 1 \bigg|_{y_{i,j} = t^i x_j}.
\end{equation}
\begin{example}
Let $\lambda=(3,2)$ as in Example \ref{ex: Dlambda example}. Recall that $v(\lambda)=(2,2,3,4,4)$. Then $W^{(0)}=W(\lambda)$, $W^{(1)}$ and $W^{(2)}$ can be computed as below:
\begin{align*}
    W^{(0)}=\begin{pmatrix}
        \bar{\mathfrak{h}}_2 & \hat{\mathfrak{h}}_2 & \hat{\mathfrak{h}}_3 & \bar{\mathfrak{h}}_4 & \hat{\mathfrak{h}}_4\\
        \vdots&\vdots&\vdots&\vdots&\vdots\end{pmatrix}  \xrightarrow[\text{Step1}]{\text{}}\begin{pmatrix}
        \bar{\mathfrak{h}}_2 & \hat{\mathfrak{h}}_2 & \hat{\mathfrak{h}}_3 & \bar{\mathfrak{h}}_4 & \hat{\mathfrak{h}}_4\\
        \vdots&\vdots&\vdots&\vdots&\vdots \end{pmatrix} \xrightarrow[\text{Step2}]{\text{}}  \begin{pmatrix}
        \mathfrak{h}_2 & \hat{\mathfrak{h}}_2 & \hat{\mathfrak{h}}_3 & \bar{\mathfrak{h}}_4 & \hat{\mathfrak{h}}_4\\
        \vdots&\vdots&\vdots&\vdots&\vdots   
    \end{pmatrix}=W^{(1)},\\
        W^{(1)}=\begin{pmatrix}
        \mathfrak{h}_2 & \hat{\mathfrak{h}}_2 & \hat{\mathfrak{h}}_3 & \bar{\mathfrak{h}}_4 & \hat{\mathfrak{h}}_4\\
        \vdots&\vdots&\vdots&\vdots&\vdots\end{pmatrix} \xrightarrow[\text{Step1}]{\text{}} \begin{pmatrix}
        \mathfrak{h}_2 & \bar{\mathfrak{h}}_4& \hat{\mathfrak{h}}_2 & \hat{\mathfrak{h}}_3  & \hat{\mathfrak{h}}_4\\
        \vdots&\vdots&\vdots&\vdots&\vdots \end{pmatrix}  \xrightarrow[\text{Step2}]{\text{}} \begin{pmatrix}
        \mathfrak{h}_2 & \mathfrak{h}_4& \hat{\mathfrak{h}}_2 & \hat{\mathfrak{h}}_3  & \hat{\mathfrak{h}}_4\\
        \vdots&\vdots&\vdots&\vdots&\vdots \end{pmatrix}=W^{(2)}.
\end{align*}
\end{example}

\subsection{Proof of Lemma \ref{lem: multiplication by q}}\label{subsec: proof of Lemma 4.10}
In this section we prove Lemma \ref{lem: multiplication by q}. In particular, we prove Lemma \ref{lem: q multi middle} which naturally implies Lemma \ref{lem: multiplication by q}. 

\begin{lem}\label{lem: q multi middle}
For an integer vector $v=(v_1,v_2,\dots,v_n)$ of length $n\geq 2$, consider $n$ by $n$ matrices $V$ and $V'$ given as
\begin{align*}
    V_{i,j}=\begin{cases*}\hat{\mathfrak{h}}_{v_i}\qquad \text{if $j=1$}\\
    \bar{\mathfrak{h}}_{v_i+n-1}\qquad \text{if $j=2$}
    \end{cases*}, \qquad V'_{i,j}=\begin{cases*}\bar{\mathfrak{h}}_{v_i+n-1}\qquad \text{if $j=1$}\\\hat{\mathfrak{h}}_{v_i}\qquad \text{if $j=2$}.
    \end{cases*}
\end{align*}
and  $V_{i,j}=V'_{i,j}=\mathfrak{h}_{v_i+j-2}$ for $j>2$. Then we have $q \det(V)+\det(V')=0$.
\end{lem}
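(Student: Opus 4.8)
The plan is to strip off the two special columns, reducing the $n\times n$ operator-determinant identity to one commutation-type relation between a single ``$\hat{}$'' and a single ``$\bar{}$'' operator, and then to prove that relation by a sign-reversing involution refining the map $\SW$ of Lemma~\ref{lem: SW involution}. \textbf{Reduction to a two-operator identity.} Expand $\det V$ and $\det V'$ as signed sums over $\mathfrak{S}_n$; in the expansion of $\det V'$ relabel $\sigma\mapsto\sigma\circ(1\,2)$. Because columns $3,\dots,n$ of $V$ and $V'$ are identical with all entries of type $\mathfrak{h}$ (which pairwise commute by Corollary~\ref{cor: commute}), the relabelling merely swaps the roles of $\sigma(1)$ and $\sigma(2)$ in the first two factors and flips a sign, giving
\[
q\det V+\det V'=\sum_{\sigma\in\mathfrak{S}_n}(-1)^{\sgn\sigma}\Bigl(q\,\hat{\mathfrak{h}}_{v_{\sigma(1)}}\bar{\mathfrak{h}}_{v_{\sigma(2)}+n-1}-\bar{\mathfrak{h}}_{v_{\sigma(2)}+n-1}\hat{\mathfrak{h}}_{v_{\sigma(1)}}\Bigr)\prod_{j=3}^{n}\mathfrak{h}_{v_{\sigma(j)}+j-2}.
\]
Pairing $\sigma$ with $\sigma\circ(1\,2)$ — same factor $\prod_{j\ge 3}\mathfrak{h}_{v_{\sigma(j)}+j-2}$, opposite sign — the sum collapses to $0$ as soon as we know the operator identity
\[
q\,\hat{\mathfrak{h}}_{a}\,\bar{\mathfrak{h}}_{b+c}-\bar{\mathfrak{h}}_{b+c}\,\hat{\mathfrak{h}}_{a}=q\,\hat{\mathfrak{h}}_{b}\,\bar{\mathfrak{h}}_{a+c}-\bar{\mathfrak{h}}_{a+c}\,\hat{\mathfrak{h}}_{b}\qquad(\ast)
\]
on $\mathbb{F}[\mathbf{y}]$, for all integers $a,b$ and all $c=n-1\ge 1$. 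So the whole lemma reduces to $(\ast)$.

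\textbf{Unwinding $(\ast)$.} Apply both sides of $(\ast)$ to a monomial $y^A$. The operator $\hat{\mathfrak{h}}_m$ (resp.\ $\bar{\mathfrak{h}}_\ell$) stacks a $\PP$-chain $L^{(1)}\in\hat{\mathbf{C}}_m$ (resp.\ $L^{(2)}\in\bar{\mathbf{C}}_\ell$) on top of the current filling, and the $\dinv$-interactions of $L^{(1)}$ and of $L^{(2)}$ with $A$ are the same regardless of which of the two is placed first; hence the only discrepancy between $\hat{\mathfrak{h}}_m\bar{\mathfrak{h}}_\ell\,y^A$ and $\bar{\mathfrak{h}}_\ell\hat{\mathfrak{h}}_m\,y^A$ is the mutual contribution $\dinv(L^{(1)},L^{(2)})$ versus $\dinv(L^{(2)},L^{(1)})$. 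Thus $(\ast)$ is equivalent to the statement that, for every finite multiset $S\subseteq\mathbb{Z}_{\ge0}\times\mathbb{Z}_{\ge1}$ of size $a+b+c$, the quantity
\[
\sum_{S=L^{(1)}\sqcup L^{(2)},\ L^{(1)}\in\hat{\mathbf{C}}_a,\ L^{(2)}\in\bar{\mathbf{C}}_{b+c}}\bigl(q^{\,1+\dinv(L^{(1)},L^{(2)})}-q^{\,\dinv(L^{(2)},L^{(1)})}\bigr)
\]
is unchanged when the split sizes $(a,b+c)$ are replaced by $(b,a+c)$. Exactly as in the proof of Lemma~\ref{lem: SW involution}, the incomparability graph $G(S)$ of $S$ in $\PP$ is a disjoint union of alternating $<_{\lex}$-paths, and its proper $2$-colourings are precisely the splittings of $S$ into two $\PP$-chains; so such splittings are parametrized by choosing an orientation of each connected component of $G(S)$.

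\textbf{The involution.} To finish, I would construct a weight-preserving bijection between the two families of splittings by re-orienting components of $G(S)$: re-orienting a component interchanges which of the two chains its two colour classes lie in, the size gap $c\ge 1$ (equivalently $|L^{(2)}|-|L^{(1)}|\ge c$) provides the slack needed to realize the prescribed size split, and one must preserve the \emph{boundary condition} that $L^{(2)}$ contains every cell of $S$ with first coordinate $0$ while $L^{(1)}$ contains none. Following the change of $\dinv(L^{(1)},L^{(2)})$ and $\dinv(L^{(2)},L^{(1)})$ component by component under re-orientation, the ``excess'' splittings cancel inside the alternating $q$-sum, and the remaining ones are matched with the corresponding splittings for $(b,a+c)$.

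\textbf{Main obstacle.} The crux is precisely this last step: designing the re-orientation so that it simultaneously respects the size constraint, the first-coordinate-$0$ boundary that separates $\hat{\mathbf{C}}$ from $\bar{\mathbf{C}}$, and the exact power of $q$ forced by $(\ast)$. This is where $n\ge 2$ enters essentially, and it is a genuine refinement of the $\SW$-involution rather than a routine adaptation. By contrast, the reduction and the unwinding are bookkeeping; one should additionally note separately that $(\ast)$ holds trivially in the degenerate ranges where some of $a,b,a+c,b+c$ is $\le 0$, since both sides then collapse to the same expression.
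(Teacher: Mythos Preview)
Your reduction to the two-operator identity
\[
(\ast)\qquad q\,\hat{\mathfrak{h}}_{a}\,\bar{\mathfrak{h}}_{b+c}-\bar{\mathfrak{h}}_{b+c}\,\hat{\mathfrak{h}}_{a}=q\,\hat{\mathfrak{h}}_{b}\,\bar{\mathfrak{h}}_{a+c}-\bar{\mathfrak{h}}_{a+c}\,\hat{\mathfrak{h}}_{b}
\]
is carried out correctly, but $(\ast)$ is \emph{false} for $c\ge 2$, so no involution can complete the argument. Take $a=1$, $b=0$, $c=2$ and apply both sides of $(\ast)$ to $1$, then compare coefficients of $y_{(0,1)}y_{(1,1)}y_{(2,1)}$. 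The right side is $(q-1)\bar{\mathfrak{h}}_3\cdot 1$, and the coefficient is $q-1$ since $\{(0,1),(1,1),(2,1)\}$ is a $\bar{\mathbf{C}}_3$-chain. On the left, the multiset $\{(0,1),(1,1),(2,1)\}$ admits exactly two splittings $L^{(1)}\sqcup L^{(2)}$ with $L^{(1)}\in\hat{\mathbf{C}}_1$ and $L^{(2)}\in\bar{\mathbf{C}}_2$, namely $L^{(1)}=\{(1,1)\}$ and $L^{(1)}=\{(2,1)\}$; in both cases $\dinv(L^{(1)},L^{(2)})=\dinv(L^{(2)},L^{(1)})=0$, so the coefficient is $2(q-1)$. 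Thus $(\ast)$ fails already here.

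The underlying issue is that the pairing $\sigma\leftrightarrow\sigma\circ(1\,2)$ is too crude: the cancellation in $q\det V+\det V'$ is not local to those cosets. The paper handles this by induction on $n$. The base case $n=2$ is exactly $(\ast)$ with $c=1$, which \emph{is} true and is proved by an involution $\tSW$ refining $\SW$ in the way you sketch. For $n\ge 3$, the paper does \emph{not} attempt a direct two-operator identity; instead it introduces auxiliary matrices $W,W',Z,Z'$ (obtained from $V,V'$ by redistributing the subscripts of the $\bar{\mathfrak{h}}$ and $\hat{\mathfrak{h}}$ columns) and proves four determinant relations among them. Two of these reduce to the $n=2$ case via $2\times 2$ minors along the first two columns, and the other two follow from column operations using $\mathfrak{h}_m=\hat{\mathfrak{h}}_m+\bar{\mathfrak{h}}_m$ together with the induction hypothesis applied to $(n-2)\times(n-2)$ minors. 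So the whole argument ultimately rests only on $(\ast)$ with $c=1$, and the larger-$c$ content is produced by the matrix manipulations rather than by a single commutation relation.
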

\begin{proof}
We proceed with induction on \( n \). For the base case \( n=2 \), we prove the claim by imitating the argument in Lemma \ref{lem: SW involution}. Let \( v = (v_1, v_2) = (a, b) \). Then, we need to prove
\begin{equation}\label{eq: Lemma 4.12 n=2}
    q \det(V) + \det(V') = q(\hat{\mathfrak{h}}_a \bar{\mathfrak{h}}_{b+1} - \hat{\mathfrak{h}}_b \bar{\mathfrak{h}}_{a+1}) + (\bar{\mathfrak{h}}_{a+1} \hat{\mathfrak{h}}_b - \bar{\mathfrak{h}}_{b+1} \hat{\mathfrak{h}}_q) = 0,
    \end{equation}
which is equivalent to 
\begin{equation}\label{eq: Lemma 4.12 n=2 modified}
    q \hat{\mathfrak{h}}_a \bar{\mathfrak{h}}_{b+1} + \bar{\mathfrak{h}}_{a+1} \hat{\mathfrak{h}}_b = q \hat{\mathfrak{h}}_b \bar{\mathfrak{h}}_{a+1} + \bar{\mathfrak{h}}_{b+1} \hat{\mathfrak{h}}_q.
\end{equation}
To that end, we define a bijection 
\[
    \tSW: \hat{\mathbf{C}}_a \times \bar{\mathbf{C}}_{b+1} \cup \bar{\mathbf{C}}_{a+1} \times \hat{\mathbf{C}}_b \rightarrow \hat{\mathbf{C}}_b \times \bar{\mathbf{C}}_{a+1} \cup \bar{\mathbf{C}}_{b+1} \times \hat{\mathbf{C}}_a
\]
as follows. Let \( L = (L_1, L_2) \in \hat{\mathbf{C}}_a \times \bar{\mathbf{C}}_{b+1} \cup \bar{\mathbf{C}}_{a+1} \times \hat{\mathbf{C}}_b \). Consider a directed graph on the elements of \( L \), where edges are defined as \( (a, b) \rightarrow (c, d) \) for those satisfying \( (a, b) <_\text{lex} (c, d) \) and \( \dinv(\{(a, b)\}, \{(c, d)\}) = 1 \). As we have seen in Lemma~\ref{lem: SW involution}, every connected component of this directed graph forms a directed path. Note that in \( L \) there is only one element of the form \( (0, *) \). Let \( P = (P_1, P_2) \) be the directed path containing this element, where \( P_1 = L_1 \cap P \) and \( P_2 = L_2 \cap P \). Then, let \( (L'_1, L'_2) = \SW(L_1 \setminus P_1, L_2 \setminus P_2) \). Then, \( \tSW(L) \) is defined by
\[
    \tSW(L) = \begin{cases}
        (P_1 \cup L'_1, P_2 \cup L'_2) & \text{if $P$ is an odd path}, \\
        (P_2 \cup L'_1, P_1 \cup L'_2) & \text{if $P$ is an even path}.
    \end{cases}
\]

Since the involution \( \SW \) preserves the \( \dinv \) statistic, and swapping the even path changes an additional \( \pm 1 \) to the \( \dinv \), it is straightforward to see that
\[
    \dinv(L) + \chi(L \in \hat{\mathbf{C}}_a \times \bar{\mathbf{C}}_{b+1}) = \dinv (\tSW(L)) + \chi(\tSW(L) \in \hat{\mathbf{C}}_b \times \bar{\mathbf{C}}_{a+1}).
\]
This proves \eqref{eq: Lemma 4.12 n=2 modified}, thus \eqref{eq: Lemma 4.12 n=2} follows.

Now let $n\geq 3$ and assume the claim is true for any number smaller than $n$. We define $n$ by $n$ matrices $W$, $W'$, $Z$ and $Z'$ as 
\begin{align*}
    W_{i,1}=\bar{\mathfrak{h}}_{v_i+1},\quad W_{i,2}=\hat{\mathfrak{h}}_{v_i+n-2}, \quad W'_{i,1}=\hat{\mathfrak{h}}_{v_i+n-2}, \quad W'_{i,2}=\bar{\mathfrak{h}}_{v_i+1},\\
    Z_{i,1}=\bar{\mathfrak{h}}_{v_i+n-2},\quad Z_{i,2}=\hat{\mathfrak{h}}_{v_i+1}, \quad Z'_{i,1}=\hat{\mathfrak{h}}_{v_i+1}, \quad Z'_{i,2}=\bar{\mathfrak{h}}_{v_i+n-2},
\end{align*}
and $W_{i,j}=W'_{i,j}=Z_{i,j}=Z'_{i,j}=\mathfrak{h}_{v_i+j-2}$ for $j>2$. We claim the following
\begin{align}
    \label{eq: first}q \det(V)+\det(V')&=-\det(W)-q\det(W)'\\
    \label{eq: second}\det(W)&=-\det(Z)\\
    \label{eq: third}\det(W')&=-\det(Z')\\
    \label{eq: fourth}\det(Z)&=-q \det(Z')
\end{align}
which finishes the proof. 

To prove \eqref{eq: first}, pick any $1\leq a<b\leq n$. Then it is enough to show 
\begin{equation*}
    q \det\begin{pmatrix}
V_{a,1} & V_{a,2} \\
V_{b,1} & V_{b,2} 
\end{pmatrix}+\det\begin{pmatrix}
V'_{a,1} & V'_{a,2} \\
V'_{b,1} & V'_{b,2} 
\end{pmatrix}= - \det\begin{pmatrix}
W_{a,1} & W_{a,2} \\
W_{b,1} & W_{b,2} 
\end{pmatrix}-q \det\begin{pmatrix}
W'_{a,1} & W'_{a,2} \\
W'_{b,1} & W'_{b,2} 
\end{pmatrix}  
\end{equation*}
which is equivalent to (after a rearrangement of terms)
\begin{equation*}
 \left(   q \det\begin{pmatrix}
\hat{\mathfrak{h}}_{v_a} & \bar{\mathfrak{h}}_{v_a+1} \\
\hat{\mathfrak{h}}_{v_b+n-2} & \bar{\mathfrak{h}}_{v_b+n-1}  
\end{pmatrix}+\det\begin{pmatrix}
\bar{\mathfrak{h}}_{v_a+1}&\hat{\mathfrak{h}}_{v_a}   \\
\bar{\mathfrak{h}}_{v_b+n-1} &\hat{\mathfrak{h}}_{v_b+n-2}   
\end{pmatrix}\right) -\left(   q \det\begin{pmatrix}
\hat{\mathfrak{h}}_{v_b} & \bar{\mathfrak{h}}_{v_b+1} \\
\hat{\mathfrak{h}}_{v_a+n-2} & \bar{\mathfrak{h}}_{v_a+n-1}  
\end{pmatrix}+\det\begin{pmatrix}
\bar{\mathfrak{h}}_{v_b+1}&\hat{\mathfrak{h}}_{v_b}   \\
\bar{\mathfrak{h}}_{v_a+n-1} &\hat{\mathfrak{h}}_{v_a+n-2}   
\end{pmatrix}\right) =0
\end{equation*}
Each summand on the left-hand side is zero due the induction hypothesis for $n=2$.

Now we show \eqref{eq: second}. Starting from $W$, we apply following column operations:
\begin{itemize}
\item subtract the $n$-th column from the second column, then multiply (-1) to the second column
\item swap the first and the second columns
\item subtract the third column from the second column, then multiply (-1) to the second column.
\end{itemize}
It is easy to check that the resulting matrix is $Z$ and $\det(W)=-\det(Z)$ follows. Similar argument also shows \eqref{eq: third}.

Lastly, we show \eqref{eq: fourth}. Let $I=\{1,2,\cdots,n\}\setminus\{3,4\}$ and pick any $A\subset \{1,2,\cdots,n\}$ such that $|A|=n-2$. Then denoting $Z_{A,I}$ to be a submatrix given by restricting row indices to $A$ and column indices to $I$, induction hypothesis implies  $\det(Z_{A,I})+q \det(Z'_{A,I})=0$. This completes the proof. 
\end{proof}
\begin{corollary} With the same notations as in Lemma \ref{lem: q multi middle}, define $n$ by $n$ matrices $\hat{V}$ and $\hat{V'}$ by
\begin{itemize}
\item $\hat{V}_{i,j}=V_{i,j}$ and $\hat{V}'_{i,j}=V'_{i,j}$ for $j=1,2$
\item $\hat{V}_{i,j}=\hat{V}'_{i,j}=\hat{\mathfrak{h}}_{v_i+j-2}$ for $j>2$.
\end{itemize}
Then we have $q\det(\hat{V})+\det(\hat{V}')=0$.
\end{corollary}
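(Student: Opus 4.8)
The plan is to mimic the proof of Lemma~\ref{lem: q multi middle} step by step, replacing every occurrence of $\mathfrak{h}_m$ in the columns $j>2$ by $\hat{\mathfrak{h}}_m$, and then to exploit the extra relation $\hat{\mathfrak{h}}_a\hat{\mathfrak{h}}_b=\hat{\mathfrak{h}}_b\hat{\mathfrak{h}}_a$ from Corollary~\ref{cor: commute}. First I would observe that \eqref{eq: first} carries over verbatim: its proof is a Laplace expansion of the four relevant determinants along their first two columns, reduced to the $n=2$ identity \eqref{eq: Lemma 4.12 n=2 modified}, and both ingredients involve only columns $1,2$ of the matrices (which are unchanged) together with the fact that the remaining columns of the four matrices agree (which still holds after the substitution $\mathfrak{h}\mapsto\hat{\mathfrak{h}}$). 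Thus one gets $q\det(\hat V)+\det(\hat V')=-\det(\hat W)-q\det(\hat W')$, where $\hat W,\hat W'$ are the matrices of that proof with $\hat{\mathfrak{h}}$ in columns $j>2$, namely $\hat W_{i,1}=\bar{\mathfrak{h}}_{v_i+1}$, $\hat W_{i,2}=\hat{\mathfrak{h}}_{v_i+n-2}$, $\hat W'_{i,1}=\hat{\mathfrak{h}}_{v_i+n-2}$, $\hat W'_{i,2}=\bar{\mathfrak{h}}_{v_i+1}$, and $\hat W_{i,j}=\hat W'_{i,j}=\hat{\mathfrak{h}}_{v_i+j-2}$ for $j>2$. (For $n=2$ there are no columns $j>2$, so the corollary is literally Lemma~\ref{lem: q multi middle}; hence assume $n\ge 3$.)

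It then suffices to show $\det(\hat W)=0$ and $\det(\hat W')=0$. The first is immediate: the $2$nd and $n$th columns of $\hat W$ are both equal to $(\hat{\mathfrak{h}}_{v_i+n-2})_i$, and every column strictly between them consists of $\hat{\mathfrak{h}}$-operators, which commute pairwise; so transposing the $n$th column leftward past these commuting columns merely multiplies the determinant by $(-1)^{n-3}$ and yields a matrix with two equal adjacent columns whose entries commute, forcing its determinant, hence $\det(\hat W)$, to vanish. The genuine obstacle is $\det(\hat W')=0$: here the repeated column sits in positions $1$ and $n$, and the unique $\bar{\mathfrak{h}}$-column (position $2$) lies between them, so the transposition argument fails because $\bar{\mathfrak{h}}_p$ and $\hat{\mathfrak{h}}_q$ do not commute; likewise the column-operation derivations \eqref{eq: second}, \eqref{eq: third} of Lemma~\ref{lem: q multi middle} degenerate, since the subtractions that previously produced a $\bar{\mathfrak{h}}$-column now produce a zero column tautologically.

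To get past this I would carry the argument through the analogues of \eqref{eq: second}–\eqref{eq: fourth}: the matrices $\hat Z,\hat Z'$ obtained as in that proof again have two equal columns, $\det(\hat Z)=0$ being of the easy (commuting) type, and the identity $\det(\hat Z)=-q\det(\hat Z')$ (the analogue of \eqref{eq: fourth}, together with the analogue of \eqref{eq: third} expressing $\det(\hat W')$ in terms of $\det(\hat Z')$) is proved exactly as \eqref{eq: fourth} was: Laplace-expand along two $\hat{\mathfrak{h}}$-columns, so that each complementary minor is an $(n-2)\times(n-2)$ instance of the corollary, and finish by induction on $n$ with base case $n=2$ (Lemma~\ref{lem: q multi middle}); this gives $\det(\hat Z')=0$, whence $\det(\hat W')=0$. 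Alternatively — and this is the version I expect to be cleanest to write — one can run the sign-reversing involution $\widetilde{\SW}$ of the proof of Proposition~\ref{prop: JT formula} directly on the fillings of the diagrams $D(\sigma)$ appearing in the expansion of $q\det(\hat W')$, which is just a degenerate case of that proposition (all but one column carrying an $\hat{\mathfrak{h}}$-constraint), so that the involution cancels all terms in pairs. In either route the only delicate point is bookkeeping which column carries the $\bar{\mathfrak{h}}$-constraint through the expansion; everything else is the routine determinant manipulation already used for Lemma~\ref{lem: q multi middle}.
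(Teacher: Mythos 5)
Your proposal takes a completely different --- and much longer --- route than the paper. The paper's proof is a two-line projection argument: let $\Phi\colon\mathbb{F}[\mathbf{y}]\to\mathbb{F}[\mathbf{y}]$ be the linear map that keeps a monomial $y^A$ if $A$ contains exactly one element of the form $(0,j)$ and kills it otherwise. Since every $\PP$-chain contains at most one element with first coordinate $0$, applying $\Phi$ to $\det(V)\cdot 1$ forces the $\bar{\mathfrak{h}}$-column to supply the unique $(0,*)$-entry and every $\mathfrak{h}$-column to behave as an $\hat{\mathfrak{h}}$-column; thus $\Phi(\det(V)\cdot 1)=\det(\hat V)\cdot 1$ and likewise for $V'$. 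Applying $\Phi$ to $(q\det(V)+\det(V'))\cdot 1=0$ gives the Corollary at once, with no determinant manipulations.

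Your proposal instead tries to replay the chain \eqref{eq: first}--\eqref{eq: fourth} with $\mathfrak{h}\mapsto\hat{\mathfrak{h}}$ in columns $j>2$. The reduction $q\det(\hat V)+\det(\hat V')=-\det(\hat W)-q\det(\hat W')$ via the analogue of \eqref{eq: first}, and the vanishing $\det(\hat W)=0$, are fine, and you correctly identify $\det(\hat W')=0$ as the genuine obstacle. But neither route you offer actually closes that gap. Route (a) relies on an analogue of \eqref{eq: third} relating $\det(\hat W')$ to $\det(\hat Z')$, yet you have just observed that those column operations degenerate in the $\hat{}\,$-setting: the subtraction that previously turned column $1$ of $W'$ into a $\bar{\mathfrak{h}}$-column now produces a zero column, and the legitimacy of that subtraction requires precisely the vanishing of a determinant with two equal $\hat{\mathfrak{h}}$-columns in positions $1$ and $n$ with the non-commuting $\bar{\mathfrak{h}}$-column $2$ sitting between them --- which is the very statement you are trying to prove. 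So route (a) invokes a step you have already argued fails. Route (b), handing off to the involution $\widetilde{\SW}$ of Proposition~\ref{prop: JT formula}, is not a drop-in either: $\hat W'$ is built from an arbitrary vector $v$, not from the specific degree pattern $v(\lambda)_j-i+1$ of that proposition, so it is not a matrix $W(\lambda)$, and there is no identified $\lambda$ with $\mathcal T(\lambda)=\emptyset$ to which it would correspond. Note also that $\det(\hat W')=0$ is in fact \emph{equivalent} to the Corollary (given $\det(\hat W)=0$ and your reduction), so any derivation of it must be careful not to smuggle in the statement being proved. As written, the proof of $\det(\hat W')=0$ is missing; I would suggest using the paper's projection $\Phi$ instead, which handles $V$ and $V'$ simultaneously and bypasses the whole $\hat W,\hat W',\hat Z,\hat Z'$ apparatus.
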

\begin{proof}
Let $\Phi: \mathbb{F}[\textbf{y}]\rightarrow \mathbb{F}[\textbf{y}]$ be a linear operator defined by $\Phi(y^{A})=y^{A}$ if $\sum_{j}A_{0,j}=1$, otherwise $\Phi(y^{A})=0$. Then $\Phi$ sends $\left(q\det(V)+\det(V')\right)\cdot 1$ to $\left(q\det(\hat{V})+\det(\hat{V}')\right)\cdot 1$. Therefore we deduce $\left(q\det(\hat{V})+\det(\hat{V}')\right)\cdot 1=0$ and this is enough to imply $q\det(\hat{V})+\det(\hat{V}')=0$ as an operator. 
\end{proof}

\begin{proof}[Proof of Lemma \ref{lem: multiplication by q}] We define $n$ by $n$ matrices $V^{(1)}, V^{(2)}, \cdots, V^{(n)}$ given as
\begin{align*}
V^{(\ell)}_{i,j}=\begin{cases*}\hat{\mathfrak{h}}_{v_i+j-1}\qquad \text{if $j<\ell$}\\
 \bar{\mathfrak{h}}_{v_i+n-1} \qquad \text{if $j=\ell$}\\
\hat{\mathfrak{h}}_{v_i+j-2}\qquad \text{if $j>\ell$}.
\end{cases*}
\end{align*}
We claim that $\det(V^{(\ell)})=-q \det(V^{(\ell+1)})$. Let $I=\{\ell,\ell+1,\dots,n\}$ and pick any $A\subset \{1,2,\dots,n\}$ such that $|A|=n-\ell+1$. Then we have $q \det(V^{(\ell+1)}_{A,I})+ \det(V^{(\ell)}_{A,I})=0$ by the previous corollary. Therefore the claim is proved and we obtain $\det(V^{(1)})=(-q)^{n-1} \det(V^{(n)})$.

\end{proof}

\section{Generalized Macdonald polynomials for filled diagrams}
\label{sec5: column exchange filled diagram}
\subsection{Macdonald polynomials for filled diagrams and HHL formula}\label{subsec: generalization of Macdonald}

 Given a diagram $D$, we regard a word \(\sigma \in \mathbb{Z}_{\geq 1}^{{|D|}}\) as an assignment of integer entries to the cells of \( D \), proceeding from left to right within each row, and from the top row to the bottom row. In this way, we consider $\sigma(u)$ for $u\in D$ as an integer assigned for $u$ in the assignment of $\sigma$ in $D$. The \emph{content} $\cont_D(\sigma,i)$ of $\sigma$ in the $i$-th row of $D$ is defined by the (infinite) vector whose $a$-th component is given by the number of $a$'s in the assignment of $\sigma$ in the $i$-th row of $D$.

The celebrated result of Haglund--Haiman--Loehr \cite{HHL05} gives Macdonald polynomials using two statistics $\inv_\mu$ and $\maj_\mu$. We will use slightly different convention for $\inv$, which we now explain. For a diagram $D$, and cells $u=(i,j)$ and $v=(i',j')$ of $D$, we say that a pair $(u,v)$ is an \emph{attacking pair} if either 
\begin{center}
    (1) $i=i'$ and $j<j'$ $\quad$ or$\quad$ (2) $i=i'+1$ and $j>j'$. 
\end{center}
For a word $\sigma \in\mathbb{Z}_{\geq 1}^{|D|}$, we consider a pair $(u,v)$ of cells in $D$ to be an \emph{inversion pair} of $\sigma$ if $(u,v)$ is an attacking pair and $\sigma(u)>\sigma(v)$. We denote the set of inversion pairs of $\sigma$ by $\Inv_D(\sigma)$ and define $\inv_D(\sigma)$ to be 
\[
    \inv_D(\sigma) \coloneqq q^{|\Inv_D(\sigma)|}.
\]
A cell $u$ is \emph{descent} of $\sigma$ if $\sigma(u)>\sigma(v)$, where the cell $v$ is the cell right below $u$, i.e. $u=(i,j)$ and $v=(i-1,j)$. Define $\Des_D(\sigma)$ to be the set of descents of $\sigma$.

A \emph{filling} of $D$ is a function $f:D\rightarrow \mathbb{F}$, where we take $\mathbb{F}$ as a field containing $\mathbb{C}(q,t)$. Then a \emph{filled diagram} $(D,f)$ is a pair of a diagram and a filling on it. We define $\maj_{(D,f)}(\sigma)$ as the product of $f(u)$ over all cells $u$ which are descents of $\sigma$, i.e.,
\[
    \maj_{(D,f)}(\sigma) \coloneqq \prod_{u\in\Des_D(\sigma)} f(u).
\]
Note that bottom cells of $D$ cannot be a descent so the value $f(u)$ for bottom cell $u$ is redundant in a definition of $\maj_{(D,f)}$. Therefore we usually represent a filled diagram $(D,f)$ omitting values on bottom cells. Finally, $\stat_{(D,f)}(w)$ is defined by the product of $\inv_D$ and $\maj_{(D,f)}$,
\[
    \stat_{(D,f)}(\sigma) \coloneqq \inv_D(\sigma)\maj_{(D,f)}(\sigma).
\]

In \cite{KLO22}, a generalization of modified Macdonald polynomial of a filled diagram $(D,f)$ is defined by
\[
    \widetilde{H}_{(D,f)} := \sum_{\sigma \in \mathbb{Z}_{\geq 1}^{|D|}} \stat_{(D,f)}(\sigma) x^{\sigma}.
\]
We also call $\widetilde{H}_{(D,f)}$ as the \emph{Macdonald polynomial} for a filled diagram $(D,f)$. Indeed, this generalizes the concept of the celebrated HHL formula \cite{HHL05,HHL08}, which gives the combinatorial formula for the modified Macdonald polynomial. To state, we recall some notations first. Previously we defined $\arm_{\mu}(u)$ and  $\leg_{\mu}(u)$ for a partition $\mu$ and these were generalized to a more general shape \cite{HHL08}. 

Let $\beta$ be a diagram \(\beta = [[\beta_1], [\beta_2], \dots, [\beta_k]]\) for some positive integers $\beta_i$'s. For a cell \(u \in \beta\), the \(\arm_\beta\) and \(\leg_\beta\) are defined as follows:
\[
\leg_\beta(u) = \beta_i - j = (\text{number of cells strictly north of } u),
\]
\[
\arm_\beta(u) = |\{i' \in \{1, 2, \dots, i-1\} : j-1 \le \beta_{i'} < \beta_i\}| + |\{i' \in \{i, i+1, \dots, k\} : j \le \beta_{i'} \le \beta_i\}|.
\]
In other words, \(\arm_\beta(u)\) represents the number of cells strictly to the right of \(u\) in columns of height \(\beta_{i'} \leq \beta_i\) or strictly to the left of the cell just below \(u\) in columns of height \(\beta_{i'} < \beta_i\). Define the \emph{standard filling} $f^{\st}_{\beta}$ on $\beta$ by
\[
    f^{\st}_\beta(u) = q^{-\arm_\beta(u)}t^{\leg_\beta(u)}.
\]
Now we can give the celebrated HHL formula using this standard filling.

\begin{thm}\label{thm: HHL formula}\cite{HHL05, HHL08} 
Let $\mu$ be a partition and $\beta$ be the diagram obtained by rearranging columns of $\mu$. Then we have
\begin{equation}\label{eq: HHL formula}
    \widetilde{H}_\mu = \sum_{\sigma \in \mathbb{Z}_{\geq 1}^{|\beta|}} \stat_{(\beta,f_\beta^{\st})}(\sigma) x^{\sigma}.
\end{equation}
\end{thm}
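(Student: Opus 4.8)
The plan is to split the identity into two parts: the case where $\beta = \mu$ is the Young diagram of $\mu$ itself (columns listed in weakly decreasing order of height), which is the original Haglund--Haiman--Loehr formula, and the invariance of the right-hand side under permuting the columns of $\beta$. For the first part, observe that when $\beta = \mu$ the quantities $\arm_\beta$ and $\leg_\beta$ reduce to the usual $\arm_\mu$ and $\leg_\mu$, and the standard filling $f^{\st}_\mu$ repackages the descent contribution so that $\stat_{(\mu, f^{\st}_\mu)}(\sigma)$ is exactly the monomial $q^{\inv_\mu(\sigma)} t^{\maj_\mu(\sigma)}$ appearing in \cite{HHL05} (after identifying conventions); so this case is the HHL formula verbatim. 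Since any ordering of the columns of $\mu$ is obtained from $\beta = \mu$ by a sequence of transpositions of adjacent columns, and a transposition of two equal-height columns does not change the filled diagram at all, it remains to prove: if $\beta'$ is obtained from $\beta$ by interchanging two adjacent columns of heights $a \ne b$, then $\sum_\sigma \stat_{(\beta, f^{\st}_\beta)}(\sigma) x^\sigma = \sum_\sigma \stat_{(\beta', f^{\st}_{\beta'})}(\sigma) x^\sigma$.

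The tool for this is the column exchange rule introduced in \cite{KLO22}, which I would use to build an explicit weight-preserving bijection $\phi$ between fillings of $\beta$ and fillings of $\beta'$. The map $\phi$ touches only the entries in the two columns being swapped --- call them the block --- and moves no entry out of its row, so $x^\sigma$ is preserved automatically and every attacking inversion with both cells outside the block is unaffected. Even the inversions between the block and an outside column lying in the \emph{same row} are unaffected, since the number of such inversions depends only on the multiset of block entries in that row, which $\phi$ preserves. What $\phi$ genuinely does is reorganize, row by row, the (at most two) block entries of each of the rows $1, \dots, \min(a,b)$, together with the overhang entries of the taller column occupying rows $\min(a,b)+1, \dots, \max(a,b)$, which migrate from one column of the block to the other.

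Verifying that $\phi$ preserves $\stat = \inv_\beta \cdot \maj_{(\beta, f^{\st}_\beta)}$ is the heart of the proof and the step I expect to be the main obstacle. Three local effects must be tracked: (i) the change in the count of attacking inversions internal to the block --- the same-row pairs $\{(i,c),(i,c+1)\}$, the diagonal pairs $\{(i,c+1),(i-1,c)\}$, and the anomalous boundary pairs that appear or disappear in the row where the shorter column ends; (ii) the change in which block cells are descents, again concentrated at that boundary row, since a descent compares a cell with the cell directly below it and that lower neighbor can change; and (iii) the change in the descent weights $q^{-\arm_\beta(u)} t^{\leg_\beta(u)}$, because $\arm_\beta$ at a cell in either column of the block is sensitive to the height of the adjacent column. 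The generalized definition of $\arm_\beta$ recalled above is tuned precisely so that the net change of $\inv_\beta$ exactly cancels the net change of $\sum_{u \in \Des} \arm_\beta(u)$, while $\sum_{u \in \Des} \leg_\beta(u)$ is invariant. Carrying out this cancellation amounts to the somewhat delicate case analysis of the boundary row and of the descent structure before and after $\phi$, and this is exactly the content of the column exchange rule developed in this section.
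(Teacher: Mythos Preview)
The paper does not give its own proof of this theorem; it is quoted as a result of Haglund--Haiman--Loehr (the case $\beta=\mu$ is \cite{HHL05}, the extension to arbitrary column orderings is \cite{HHL08}). So there is no proof here to compare against, and your reduction of the $\beta=\mu$ case to the cited formula is exactly what the paper intends.

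Your overall plan --- accept $\beta=\mu$ and then show the right-hand side is invariant under swapping two adjacent columns --- is the natural structure. The gap is in your claim that the column exchange rule of Definition~\ref{def: column exchange operator} and Proposition~\ref{prop: column exchange} carries out the swap. It does not, because the hypotheses of Definition~\ref{def: column exchange operator} fail for the standard filling $f^{\st}_\beta$. With column $j$ of height $n$ and column $j+1$ of height $m<n$, the ratio $f^{\st}_\beta(i,j)/f^{\st}_\beta(i,j+1)$ carries the $t$-factor $t^{(n-i)-(m-i)}=t^{\,n-m}$, whereas $q^{-1}f^{\st}_\beta(m+1,j)$ carries $t^{\,n-m-1}$; the two sides of the required equality differ by a factor of $t$. (In the two-column example $\beta=[[1,3],[1,2]]$ one finds $f^{\st}(2,1)/f^{\st}(2,2)=q^{-1}t$ but $q^{-1}f^{\st}(3,1)=q^{-1}$.) The column exchange rule in this paper is tailored to the $z$-deformed fillings of Section~\ref{sec: deform}, where the ratio condition does hold (cf.\ Figure~\ref{fig:rec}), not to $f^{\st}_\beta$ itself; and even when $S_j$ is defined, the output filling is prescribed by Definition~\ref{def: column exchange operator}, so one would still have to check it coincides with $f^{\st}_{\beta'}$.

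The three local effects you isolate are precisely what a column-swap bijection must control, so your outline of the analysis is correct. But Proposition~\ref{prop: column exchange} cannot be invoked as a black box here; you would need to build the bijection directly (as in \cite{HHL08}) or establish a variant of the column exchange rule whose hypotheses are met by $f^{\st}_\beta$.
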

Haglund, Haiman, and Loehr \cite{HHL05} proved \eqref{eq: HHL formula} for \(\beta = \mu\), and later generalized it to any rearrangements \(\beta\) in \cite{HHL08}. 

Theorem~\ref{thm: HHL formula} shows an example of two different filled diagrams giving the same Macdonald polynomials. Another (rather simple) operation on filled diagrams, called the \emph{cycling rule}, also gives the same Macdonald polynomials. Given a diagram \( D = [D^{(1)}, \dots, D^{(\ell)}] \), consider the diagram
\[
    D' = [D^{(2)}, \dots, D^{(\ell)}, D^{(1)} + 1],
\]
where \( I + 1 = \{a + 1 : a \in I\} \) for an interval \( I \). In other words, \( D' \) is obtained by moving the leftmost column of \( D \) to the end on the right and shifting it up by one cell. The cells in \( D \) and \( D' \) correspond naturally in a bijective manner. For a filling \( f \) of \( D \), we can define a filling \( f' \) of \( D' \) inherited from \( f \) through this natural bijection. We define \(\cycling(D, f) \coloneqq (D', f')\) and it is direct to see 
\begin{equation}\label{eq: cycling}
    \widetilde{H}_{(D, f)}[X] = \widetilde{H}_{(D', f')}[X].
\end{equation}

\subsection{Column exchange rule}

In \cite{KLO22}, the authors together with Seung Jin Lee introduced a \emph{column exchange rule} (Proposition \ref{prop: column exchange}), which we exploit in the next section. 

\begin{definition}\label{def: column exchange operator}
    Let $(D,f)$ be a filled diagram satisfying the following for some positive integer $j$:
    \begin{itemize}
    \item denoting $D=[D^{(1)},D^{(2)},\dots]$ there exist integers $n\geq m$ such that $D^{(j)}=[1,n]$ and $D^{(j+1)}=[1,m]$,
    \item if $n>m$ we have $\frac{f(2,j)}{f(2,j+1)}=\dots=\frac{f(m,j)}{f(m,j+1)}=q^{-1}{f(m+1,j)}$, and
    \item if $n=m$ we have  $\frac{f(2,j)}{f(2,j+1)}=\dots=\frac{f(m,j)}{f(m,j+1)}$.
    \end{itemize}
    Then we define $S_j(D,f)$ to be a filled diagram $(D',f')$ given by:
    $D'=[D^{(1)},\dots,D^{(j+1)},D^{(j)},\dots]$ and
    \begin{itemize}
    \item $f'(i,c)=f(i,c)$ if $c\neq j,j+1$,
    \item $f'(i,j)=f(i,j+1)$, and 
    \item $f'(i,j+1)=f(i,j)$ if $i\neq m+1$ and $f'(i,j+1)=q^{-1}f(i,j)$ if $i= m+1$.
    \end{itemize}
    As it is obvious that the map $S_j$ is injective, we can naturally define $S_j^{-1}$. We say that two filled diagrams are column-equivalent $(D,f)\equiv (D',f')$, if $(D',f')$ is obtained from $(D,f)$ by applying a sequence of maps of the form $S_j$ or $S_j^{-1}$. An example of the application of the map $S_j$ will be given in the next section (see bottom of Figure \ref{fig:rec}).
\end{definition}

\begin{proposition}\label{prop: column exchange}
    If $(D,f)\equiv (D',f')$ then there exists a bijection $\phi: \mathbb{Z}^{|D|}\rightarrow \mathbb{Z}^{|D'|}$ satisfying
    \begin{itemize}
    \item $(\phi1)$ $\stat_{(D,f)}(\sigma)=\stat_{(D',f')}(\phi(\sigma))$  \item $(\phi2)$ $\cont_{D}(\sigma,i)=\cont_{D'}(\phi(\sigma),i)$ for all $i$.
    \item $(\phi3)$ $\iDes(\std(\sigma))=\iDes(\std(\phi(\sigma))$ where $\std$ denotes the standardization of a word.
    \end{itemize}
\end{proposition}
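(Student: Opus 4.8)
The plan is to reduce the general column-equivalence to the single generator $S_j$, and then construct the bijection $\phi$ explicitly for one application of $S_j$. Since column-equivalence is generated by the $S_j$ and their inverses, and all three properties $(\phi1)$–$(\phi3)$ are manifestly transitive (composition of bijections satisfying them again satisfies them, and the inverse of such a bijection works for $S_j^{-1}$), it suffices to treat a single step $(D',f')=S_j(D,f)$. So fix $j$ and assume $D^{(j)}=[1,n]$, $D^{(j+1)}=[1,m]$ with $n\ge m$, and the ratio hypotheses on $f$ from Definition~\ref{def: column exchange operator}.

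The key step is to describe $\phi$ locally: columns other than $j,j+1$ are untouched, so $\phi$ should act only on the entries a word $\sigma$ places in columns $j$ and $j+1$ (and possibly adjacent columns $j-1$, $j+2$ in how inversions are counted), fixing the rest. Reading the two columns $j,j+1$ row by row gives, for each row $i$ with $1\le i\le m$, an ordered pair $(\sigma(i,j),\sigma(i,j+1))$, plus the unpaired entries $\sigma(i,j)$ for $m<i\le n$. I would define $\phi$ on each row $i\le m$ by the local rule that compares $\sigma(i,j)$ and $\sigma(i,j+1)$ and either swaps them or leaves them, chosen precisely so that the attacking-pair inversions internal to rows $i$ of columns $j,j+1$ are preserved and so that the descent contributions through column $j$ versus $j+1$ rebalance against the change $f\mapsto f'$. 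The ratio hypothesis $f(2,j)/f(2,j+1)=\cdots=f(m,j)/f(m,j+1)$ (a single common ratio $\rho$, with the extra factor $q^{-1}$ entering at row $m+1$ when $n>m$) is exactly what makes $\maj_{(D,f)}$ and $\maj_{(D',f')}$ transform by a common monomial under this local reshuffling; verifying $(\phi1)$ is then the heart of the computation, and it is essentially the verification carried out in \cite{KLO22}. Properties $(\phi2)$ and $(\phi3)$ are comparatively easy: $(\phi2)$ holds because $\phi$ only permutes entries within fixed rows, hence does not change the multiset of entries in any row; and $(\phi3)$ holds because the standardization is determined by the relative order of entries, and the local swap rule is compatible with standardization — one checks that $\phi$ commutes with $\std$ in the sense that $\std(\phi(\sigma))=\phi'(\std(\sigma))$ for the analogous map $\phi'$ on standard words, so $\iDes$ is unchanged.

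The main obstacle I anticipate is the bookkeeping in $(\phi1)$: one must track how inversion pairs straddling the boundary between columns $j,j+1$ and their neighbors $j-1$ and $j+2$ change under the local swap, and show the net change in $\inv_D$ exactly cancels the net change in $\maj_{(D,f)}$ relative to the $f\to f'$ rescaling, including the delicate row-$(m+1)$ case where the $q^{-1}$ correction appears. The cleanest way to organize this is to split a word $\sigma$ according to its behavior in rows $1,\dots,m$ (where both columns are present) and rows $m+1,\dots,n$ (where only column $j$ is present), handle the "rectangular" part $1\le i\le m$ by an inductive or direct pairing argument on each row independently, and then check that the single extra cell $(m+1,j)$ — which becomes a descent exactly when $\sigma(m+1,j)>\sigma(m,j')$ for the appropriate lower cell — contributes the compensating $q^{\pm 1}$. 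Since this verification is precisely Proposition 3.x of \cite{KLO22}, I would either cite it directly or reproduce the short bijection and defer the case analysis to a lemma.
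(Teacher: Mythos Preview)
Your proposal is essentially what the paper does: the paper gives no proof of this proposition at all, simply stating it as a result imported from \cite{KLO22}. Your suggestion to ``cite it directly'' is therefore exactly the paper's approach, and the sketch you provide of the local row-wise bijection (swap/fix entries in columns $j,j+1$ row by row, with the ratio hypothesis ensuring the $\maj$ change cancels the $\inv$ change, and the $q^{-1}$ at row $m{+}1$ handling the boundary) is a faithful outline of how the argument in \cite{KLO22} proceeds.
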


Indeed, if \((D,f) \equiv (D',f')\), property \((\phi1)\) together with the condition \(x^{\sigma} = x^{\phi(\sigma)}\), a weaker condition than \((\phi2)\), are sufficient to imply \(\widetilde{H}_{(D,f)} = \widetilde{H}_{(D',f')}\). The map \(\phi\) also satisfies \((\phi2)\) and \((\phi3)\). Later, we use these additional properties to derive a more refined identity, see \eqref{eq: column exchange refined example} for example.

\section{Proof of Theorem \ref{thm: main theorem} (c)}
\label{sec: deform}
In this section, we show Theorem \ref{thm: main theorem} (c). In particular we will connect $\left(e_{|\mu|-|\lambda|-k}^\perp \I_{\mu,\tilde{\lambda},k}\right)$ with the Loehr--Warrington formula through \eqref{eq: lw formula final}.  By the shape independence in Theorem~\ref{thm: main theorem} (b), it is sufficient to provide a combinatorial formula for $\left(e_{|\mu|-|\lambda|-k}^\perp \I_{\mu,\tilde{\lambda},k}\right)$ for a specific partition \(\mu\) of \(n\) corners.  We fix \(\mu\) to be the augmented staircase $\delta=\delta_{n,N} \coloneqq (\underbrace{n,\dots,n}_{N},n-1,\dots,1)$ for a large enough $N$ ($N>|\lambda|$ suffices). We outline the proof strategy: 
\begin{itemize}
    \item In Section \ref{sub: 6.1}, we define auxiliary filled diagrams $(\delta_{S},f_{S})$, $(\delta_{S},f^{z}_{S})$ and $(\Rec^{\leq m},g^{z}_S)$ 
    \item In Section \ref{sub 6.2}, we go over a technical process to obtain \eqref{eq: rec final} from $\left(e_{|\delta|-|\lambda|-k}^\perp \I_{\delta,\tilde{\lambda},k}\right)$, imitating arguments in \cite[Section 6]{KLO23}
    \item In Section \ref{sub: 6.3}. we illustrate a clear connection between \eqref{eq: rec final} and (reformulated) Jacobi-Trudi type formula \eqref{eq: lw formula final} for the Loehr--Warrington formula $\LW_\lambda$.
\end{itemize}

\subsection{Deformation of filled diagrams}\label{sub: 6.1}

Let the corners of $\delta$ be indexed by \(c_1, c_2, \dots, c_n\) from top to bottom. Additionally, for a $k$-subset $S=\{i_1<i_2<\cdots<i_k\}$ of \([n]\), let \(\delta^{S}\) denote the partition obtained from $\delta$ by deleting the corners $c_i$'s for $i
\in S$. We rearrange columns of $\delta^{S}$ by moving the $i_1, i_{2}, \dots, i_k$-th columns all the way to the left in this order and denote by $\beta$. Now we define $(\delta_S,f_S):=\cycling^{k}(\beta,f^{\st}_{\beta})$ (the diagram $\delta_S$ should not be confused with $\delta^{S}$). According to Theorem~\ref{thm: HHL formula} and \eqref{eq: cycling}, we may use $(\delta_S,f_S)$ for the computation of $\widetilde{H}_{\mu^{S}}$ i.e.  $\widetilde{H}_{\mu^{S}}= \widetilde{H}_{(\delta_S,f_S)}$. For example, Figure \ref{fig: delta S process} illustrates the described process for $\delta=\delta_{4,3}=(4,4,4,3,2,1)$ and $S=\{2,3\}$. The first figure shows $(\delta^{S},f^{\st}_{\delta^{S}})$ and the second shows $(\beta,f^{\st}_{\beta})$ for the described rearrangement $\beta$. Applying the operator $\cycling$ twice, we obtain $(\delta_{S},f_{S})$ as shown in the third figure. 

Now, we introduce indeterminates $z_1,z_2,\dots,$ and define $(\delta_S,f^{z}_S)$ which is a $z$-deformation $(\delta_S,f_S)$. Denote $S=\{i_1<i_2<\dots<i_k\}$ and $S^{c}=\{j_1<j_2<\dots<j_{n-k}\}$. Then the filling $f^{z}_{S}$ is given by 
\begin{equation*}
    f^{z}_S(a,b)=q\frac{z_{N+n+1-a}}{z_{j_b}} \quad \text{for  $b\leq n-k$,} \quad f^{z}_S(a,n-k+b)=\begin{cases*}
        \frac{z_{N+n+1-a}}{z_{i_b}}\qquad \text{if $N+n+1-a\in S^{c}$ }\\
        q\frac{z_{N+n+1-a}}{z_{i_b}}\qquad \text{otherwise}
    \end{cases*} \quad \text{for $b\leq k$}.
\end{equation*}

The fourth figure in Figure~\ref{fig: delta S process} shows $(\delta_S,f^{z}_S)$ corresponding to $(\delta_S,f_S)$ in the third.
A straight calculation shows that under the specialization given by
\begin{equation}\label{eq: z_i values}
    z_j = 
    \begin{cases}
        q^{1-j}t^{j+1-N-n}&\text{if } j\le n\\
        q^{-n}t^{j+1-N-n}&\text{if } j > n
    \end{cases}
\end{equation}
$(\delta_S,f^{z}_S)$ recovers as  $(\delta_S,f_S)$.

\ytableausetup
{boxsize=2.5em}
 \begin{figure}[ht]
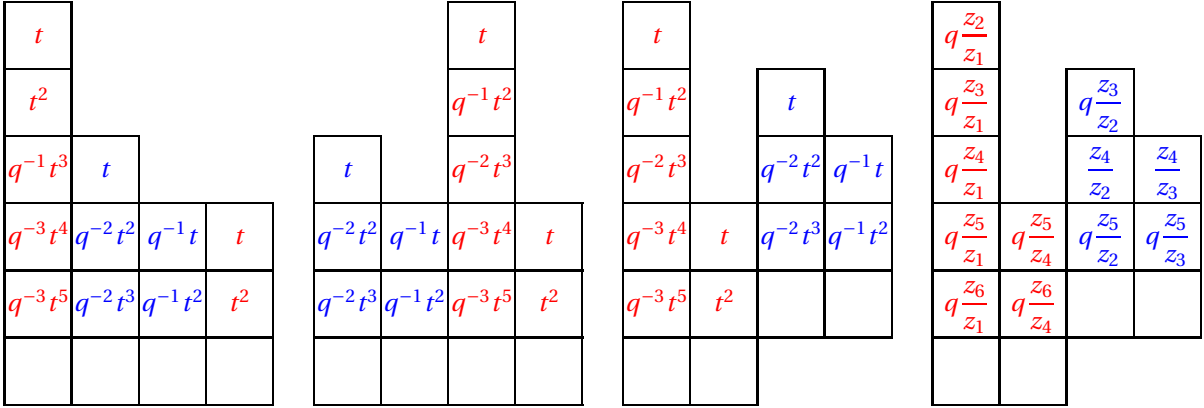
 
\scalebox{1.0}{\begin{ytableau}
 \textcolor{red}{t} \\
 \textcolor{red}{t^2} \\
 \textcolor{red}{q^{-1}t^3} & \textcolor{blue}{t}\\
 \textcolor{red}{q^{-3}t^4} & \textcolor{blue}{q^{-2}t^2} & \textcolor{blue}{q^{-1}t} &  \textcolor{red}{t}\\
 \textcolor{red}{q^{-3}t^5} & \textcolor{blue}{q^{-2}t^3} & \textcolor{blue}{q^{-1}t^2} & \textcolor{red}{t^2}\\&&&\\
\end{ytableau}}
\quad
\scalebox{1.0}{ \begin{ytableau}
 \none & \none & \textcolor{red}{t} \\
 \none &\none & \textcolor{red}{q^{-1}t^2} \\
 \textcolor{blue}{t} & \none & \textcolor{red}{q^{-2}t^3} \\
 \textcolor{blue}{q^{-2}t^2} &  \textcolor{blue}{q^{-1}t} & \textcolor{red}{q^{-3}t^4} &  \textcolor{red}{t} \\
 \textcolor{blue}{q^{-2}t^3} & \textcolor{blue}{q^{-1}t^2} & \textcolor{red}{q^{-3}t^5} & \textcolor{red}{t^2} \\ &&& \\
 \end{ytableau}}
 \quad
 \scalebox{1.0}{
 \begin{ytableau}
 \textcolor{red}{t} \\
 \textcolor{red}{q^{-1}t^2} & \none & \textcolor{blue}{t} \\
 \textcolor{red}{q^{-2}t^3} & \none & \textcolor{blue}{q^{-2}t^2} &  \textcolor{blue}{q^{-1}t}\\
 \textcolor{red}{q^{-3}t^4} &  \textcolor{red}{t} & 
 \textcolor{blue}{q^{-2}t^3} & \textcolor{blue}{q^{-1}t^2} \\
 \textcolor{red}{q^{-3}t^5} & 
 \textcolor{red}{t^2} & & \\ &
 \end{ytableau}
 }
 \quad
\scalebox{1.0}{\begin{ytableau}
 \textcolor{red}{q\dfrac{z_2}{z_1}} \\
 \textcolor{red}{q\dfrac{z_3}{z_1}} & \none & \textcolor{blue}{q\dfrac{z_3}{z_2}} \\
 \textcolor{red}{q\dfrac{z_4}{z_1}} & \none & \textcolor{blue}{\dfrac{z_4}{z_2}} &  \textcolor{blue}{\dfrac{z_4}{z_3}}\\
 \textcolor{red}{q\dfrac{z_5}{z_1}} &  \textcolor{red}{q\dfrac{z_5}{z_4}} & \textcolor{blue}{q\dfrac{z_5}{z_2}} & \textcolor{blue}{q\dfrac{z_5}{z_3}} \\
 \textcolor{red}{q\dfrac{z_6}{z_1}} & \textcolor{red}{q\dfrac{z_6}{z_4}} & & \\  &  \\
 \end{ytableau}}
 \caption{The process to obtain $(\delta_{S},f_{S}^{z})$.}
 \label{fig: delta S process}
 \end{figure}

Lastly for a positive integer $m\leq N$, we define $(\Rec^{\leq m},g_{S}^{z})$ to be a sub filled diagram of $(\delta_S,f^{z}_S)$ obtained by restricting to the first $m$ rows. Note that the diagram $\Rec^{\leq m}=[\underbrace{[1,m],\dots,[1,m]}_{n-k},\underbrace{[2,m],\dots,[2,m]}_{k}]$ is independent of the choice of $S$. In Figure \ref{fig:rec}, top figure shows $(\Rec^{\leq m},g_{S}^{z})$ obtained from the fourth figure in Figure \ref{fig: delta S process}. 

For $S\in \binom{n}{k}$, the filled diagrams $(\delta_S,f^{z}_S)$ are not in general column-equivalent to each other. However, after truncating bottom rows, they are column-equivalent as shown in the following lemma. 

\ytableausetup
{boxsize=2.5em}
\begin{figure}[ht]
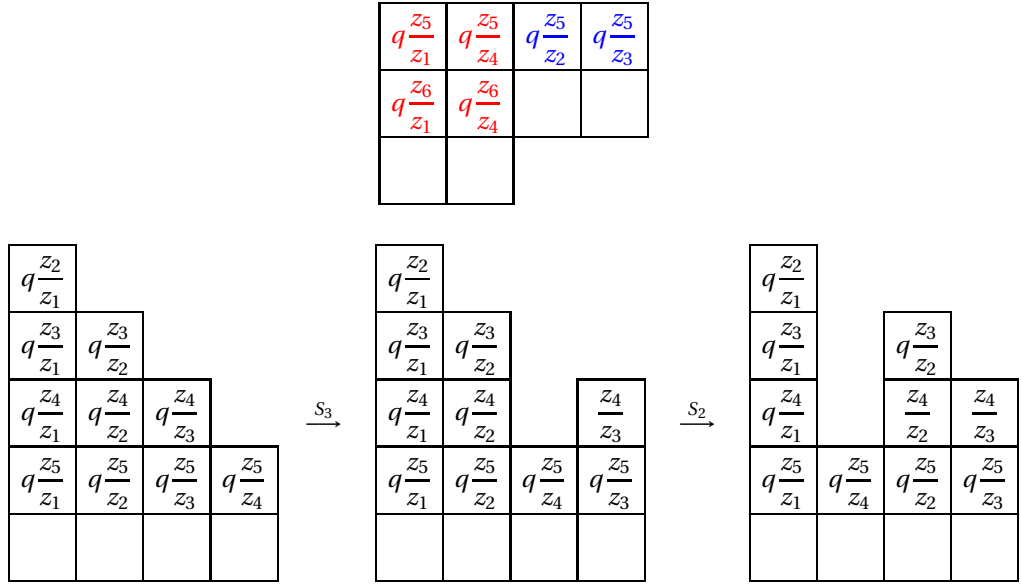

    \centering
    \begin{ytableau}
     \textcolor{red}{q\dfrac{z_5}{z_1}} &  \textcolor{red}{q\dfrac{z_5}{z_4}} & \textcolor{blue}{q\dfrac{z_5}{z_2}} & \textcolor{blue}{q\dfrac{z_5}{z_3}} \\
     \textcolor{red}{q\dfrac{z_6}{z_1}} & \textcolor{red}{q\dfrac{z_6}{z_4}} & & \\  &  \\
     \end{ytableau}
     
     \vspace{0.5cm}
     \begin{ytableau}
     q\dfrac{z_2}{z_1} \\
     q\dfrac{z_3}{z_1} & q\dfrac{z_3}{z_2} \\
     q\dfrac{z_4}{z_1} & q\dfrac{z_4}{z_2} & q\dfrac{z_4}{z_3} \\ q\dfrac{z_5}{z_1} & q\dfrac{z_5}{z_2} & q\dfrac{z_5}{z_3} & q\dfrac{z_5}{z_4} \\
     &&&
     \end{ytableau}\quad\raisebox{-45pt}{$\overset{S_3}{\longrightarrow}$}
     \quad
     \begin{ytableau}
     q\dfrac{z_2}{z_1} \\
     q\dfrac{z_3}{z_1} & q\dfrac{z_3}{z_2} \\
     q\dfrac{z_4}{z_1} & q\dfrac{z_4}{z_2} & \none & \dfrac{z_4}{z_3} \\ q\dfrac{z_5}{z_1} & q\dfrac{z_5}{z_2} & q\dfrac{z_5}{z_4} & q\dfrac{z_5}{z_3} \\
     &&&
     \end{ytableau} \quad\raisebox{-45pt}{$\overset{S_2}{\longrightarrow}$}
     \quad 
     \begin{ytableau}
     q\dfrac{z_2}{z_1} \\
     q\dfrac{z_3}{z_1} & 
     \none & 
     q\dfrac{z_3}{z_2} \\
     q\dfrac{z_4}{z_1} & 
     \none & 
     \dfrac{z_4}{z_2} & \dfrac{z_4}{z_3} \\ q\dfrac{z_5}{z_1} & q\dfrac{z_5}{z_4} & q\dfrac{z_5}{z_2} & q\dfrac{z_5}{z_3} \\
     &&&
     \end{ytableau} 

        \caption{An example of $\Rec^{\leq m}$ (top figure) and an illustration of the proof of Lemma \ref{lem: necessary lemma for filled diagrams} (bottom figures).}
    \label{fig:rec}
\end{figure}

\begin{lem}\label{lem: necessary lemma for filled diagrams}
For $m\geq 2$, let $(D'_{S},f'_{S})$ be a sub filled diagram of $(\delta_{S},f_{S})$ obtained by restricting to the $m$-th row and above. Then for any $I, J\in \binom{[n]}{k}$ we have
\begin{equation*}
    (D'_{I},f'_{I})\equiv(D'_{J},f'_{J}).
\end{equation*}
\end{lem}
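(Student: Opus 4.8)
The plan is to fix the distinguished subset $S_0=\{n-k+1,\dots,n\}$ and prove $(D'_S,f'_S)\equiv(D'_{S_0},f'_{S_0})$ for every $S\in\binom{[n]}{k}$; transitivity of $\equiv$ then gives the claim. It is convenient to argue with the $z$-deformed diagrams $(\delta_S,f^z_S)$ and their row-truncations: since \eqref{eq: z_i values} recovers $(\delta_S,f_S)$ from $(\delta_S,f^z_S)$ and the hypotheses of a column exchange are polynomial identities in the $z_i$, column-equivalence at the $z$-level specialises to the asserted statement. The first real step is to make the truncated diagram explicit. Since $m\ge 2$, discarding the rows below row $m$ in particular discards row $1$, which is the only row present in the $S^c$-columns of $\delta_S$ but not in the (cycled, hence shifted-up) $S$-columns; hence in $D'_S$ every column is an interval of consecutive rows beginning at row $m$, and after relabelling rows we may assume each begins at row $1$. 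Then the column carrying original label $\ell$ has height $h_\ell:=N+n-\ell-m+1$, strictly decreasing in $\ell$, so the multiset of column shapes is the same for every $S$. What depends on $S$ is only (1) the left-to-right order of the columns --- the labels of $S^c$ in increasing order followed by the labels of $S$ in increasing order --- and (2) the set of cells from which the defining factor $q$ of $f^z_S$ has been dropped, namely none in an $S^c$-column and, in an $S$-column with label $\ell$, exactly the cells in rows $a$ with $N+n+1-a\in S^c$. For $S_0$ this last set is empty (every $S_0$-column is too short to reach a row $a$ with $N+n+1-a\le n-k$), so $D'_{S_0}$ is the staircase whose columns occur in the order $1,2,\dots,n$ of decreasing height, carrying the uniform filling $q\,z_{N+n+1-a}/z_\ell$.

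Next I would pass from $(D'_{S_0},f'_{S_0})$ to $(D'_S,f'_S)$ by a sequence of column exchanges. The permutation carrying the column order $1,2,\dots,n$ to the column order prescribed by $D'_S$ is realised by the simple transpositions of a reduced word, arranged so that at every stage the adjacent pair being swapped has the taller column on the left; each such swap is then performed by a single operator $S_j$ of Definition~\ref{def: column exchange operator} (not its inverse). At each stage one checks two things: that the hypotheses of that definition are met, and that $S_j$ preserves the shape of the filling --- every cell equal to $q\,z_{N+n+1-a}/z_\ell$ or to $z_{N+n+1-a}/z_\ell$ according to a controlled ``dropped set'' of rows --- while enlarging the dropped sets of the two columns involved by exactly the one cell lying just above the top of the shorter column. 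The numerical hypotheses become automatic once one maintains the invariant that the two columns being exchanged have the same dropped rows strictly below the top of the shorter one and that the taller column has no dropped cell immediately above that top: granting this, the ratio of the two entries in any common row is the constant $z_{\ell'}/z_\ell$, and the auxiliary identity demanded by the definition reduces to the relation $N+n+1-a=\ell'$ for the relevant row, which is immediate from the definition of $h_{\ell'}$. Tracking the dropped cells over the whole reduced word, one must then see that they accumulate precisely on the rows $a$ with $N+n+1-a\in S^c$ of the eventual $S$-columns, so that the end result is exactly $(D'_S,f'_S)$.

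The reduction to column exchanges and the verification of the numerical hypotheses are routine given the explicit description of $f^z_S$ and the fact that $\mathfrak{S}_n$ is generated by simple transpositions. I expect the genuine obstacle to be the bookkeeping in the second check: one must choose the reduced word so that the invariant above survives every step and so that the cells from which $q$ has been dropped are deposited in exactly the pattern defining $f^z_S$, which requires a careful analysis of how the cell ``just above the top of the shorter column'' sweeps through the diagram as the columns are reordered. (An alternative, essentially equivalent, organisation is to reduce first --- again by transitivity --- to the case where $I$ and $J$ differ by replacing some $p\in I$ with $p+1\notin I$, in which $D'_I$ and $D'_J$ differ only by interchanging the columns of labels $p$ and $p+1$ between the two blocks, and then to carry out the interchange by column exchanges; the same dropped-cell bookkeeping is the crux.)
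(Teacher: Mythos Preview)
Your proposal is correct and follows essentially the same route as the paper. The paper's canonical filled diagram $(D,f)$ with $D=(\underbrace{n,\dots,n}_{N-1},n-1,\dots,1)$ and uniform filling $f(a,b)=q z_{N+n-a}/z_b$ is exactly your $(D'_{S_0},f'_{S_0})$ for $S_0=\{n-k+1,\dots,n\}$, and the paper likewise reaches each $(D'_S,f'_S)$ from it by a sequence of operators $S_j$ (illustrated only by the example $S_3$ then $S_2$ in the figure, whereas you spell out the reduced-word bookkeeping and the ``dropped $q$'' accounting in general).
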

\begin{proof}It suffices to prove the case when $m=2$. Consider a filled diagram $(D,f)$ given by $D = (\underbrace{n, \dots, n}_{N-1}, n-1, \dots, 1)$ and $f(a,b) = \frac{q z_{N+n-a}}{z_j}$. Then $(D'_S, f'_S) \equiv (D, f)$ for any $S \in \binom{[n]}{k}$, as illustrated in the bottom of Figure \ref{fig:rec}. The left side shows $(D,f)$, and after applying the operators $S_3$ and $S_2$, we obtain the filled diagram on the right. This diagram is derived from $(\delta_S, f_S^z)$ in Figure \ref{fig: delta S process} by deleting the first row.
\end{proof}

\subsection{Reducing to the $\Rec^{\leq N}$}\label{sub 6.2}
In this section we compute $\left(e_{|\delta|-k-|\lambda|}^\perp \I_{\delta,\tilde{\lambda},k}\right)$ starting from  filled diagrams $(\delta_S,f_{S})$ for the computation of $\widetilde{H}_{\delta^{S}}$. Our goal is to obtain \eqref{eq: rec final}. This section consists of a technical process where the idea of most of the arguments was given in \cite[Section 6]{KLO23}.
\begin{definition}
    Given a diagram $D$ and a positive integer $\ell \leq |D|$, we define $\Sub(D,\ell)$ to be the set of assignments of $\ell$ cells of $D$ by positive integers. We think of $R\in \Sub(D,\ell)$ as a function $R: D'\rightarrow \mathbb{Z}_{\geq 1}$ for some subdiagram $D'\subseteq D$ with $|D'|=\ell$. We let $\rsum(R)$ (resp $\csum(R)$) to be a vector whose $i$-th entry is a number of cells in the $i$-th row (resp column) of $D'$. We also define an induced full assignment $\bar{R}:D \rightarrow \mathbb{Z}_{\geq 1}$ given by assigning $1,2,\dots,|D|-\ell$ for cells in $D\setminus D'$ in a reverse reading order, and by letting $\bar{R}(c)=R(c)+|D|-\ell$ for cells $c\in D'$. See Figure \ref{fig: sub filling} for an example. We let $x^{R}=\prod_{c\in D'}x_{R(c)}$ and lastly for a filled diagram $(D,f)$, we abuse the notation for $\stat_{(D,f)}(R)$ instead of $\stat_{(D,f)}(\bar{R})$.
\end{definition}

\begin{figure}[ht]
    \centering
    \ytableausetup{boxsize=2.5em, boxframe=0.1em}
    \begin{ytableau}
    \textcolor{white}{a} &&& 1 \\
    4 & && 1 \\
    3 & 
    \end{ytableau}
    \qquad
    \begin{ytableau}
    \textcolor{blue}{6} & \textcolor{blue}{5} & \textcolor{blue}{4} & 7 \\
    10 & \textcolor{blue}{3} & \textcolor{blue}{2} & 7 \\
    9 & \textcolor{blue}{1}
    \end{ytableau}
    \caption{The left shows $R\in \Sub(D,4)$ for $D=\Rec^{\leq 3}$ and the right show the induced full assignment $\bar{R}$. We have $\rsum(R)=(1,2,1)$, $\csum=(2,0,0,2)$ and $x^{R}=x_1^2x_3x_4$.}
    \label{fig: sub filling}
\end{figure}

\begin{lem}
We have    \begin{equation}\label{eq: after perp}
        e_{|\delta|-k-|\lambda|}^\perp \I_{\delta,\tilde{\lambda},k} =(-1)^{|\tilde{\lambda}|}\sum_{S \in \binom{[n]}{k}} \dfrac{s_{\tilde{\lambda}}[w_S] \prod_{j \in S^c}{w_j}}{\prod_{\substack{i \in S \\ j \in S^c}} (w_j - w_i)} \sum_{R\in \Sub(\delta_{S},|\lambda|)}\stat_{(\delta_{S},f_{S})}(R)x^{R}
    \end{equation}
    where $w_i=T_{\delta^{(i)}}/T_{\delta}$.
\end{lem}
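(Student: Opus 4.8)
The plan is to unwind the definition of $\I_{\delta,\tilde\lambda,k}$ and then to identify the effect of $e_N^\perp$ on the combinatorial (HHL) expansion. By \eqref{eq: def of I_mu,lambda,k} applied with $\mu=\delta$ and with the index partition taken to be $\tilde\lambda$ (so that the role of ``$\lambda$'' there is played by $\tilde\lambda$, and the variables are $z_i=T_{\delta^{\{i\}}}/T_\delta=w_i$),
\[
    \I_{\delta,\tilde\lambda,k}=(-1)^{|\tilde\lambda|}\sum_{S\in\binom{[n]}{k}}\frac{s_{\tilde\lambda}[w_S]\prod_{j\in S^c}w_j}{\prod_{i\in S,\ j\in S^c}(w_j-w_i)}\,\widetilde H_{\delta^S}.
\]
By Theorem~\ref{thm: HHL formula}, \eqref{eq: cycling}, and the construction $(\delta_S,f_S)=\cycling^{k}(\beta,f^{\st}_\beta)$ (with $\beta$ the column rearrangement of $\delta^S$), we have $\widetilde H_{\delta^S}=\widetilde H_{(\delta_S,f_S)}=\sum_{\sigma\in\mathbb{Z}_{\ge1}^{|\delta_S|}}\stat_{(\delta_S,f_S)}(\sigma)\,x^\sigma$. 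Since the scalars $s_{\tilde\lambda}[w_S]\prod_{j\in S^c}w_j/\prod_{i\in S,j\in S^c}(w_j-w_i)\in\mathbb{C}(q,t)$ carry no $X$-dependence and $e_N^\perp$ (with $N:=|\delta|-k-|\lambda|=|\delta_S|-|\lambda|$) is $\mathbb{C}(q,t)$-linear, it suffices to prove, for each fixed $S$, the identity
\[
    e_N^\perp\,\widetilde H_{(\delta_S,f_S)}\;=\;\sum_{R\in\Sub(\delta_S,|\lambda|)}\stat_{(\delta_S,f_S)}(R)\,x^R ;
\]
multiplying by the scalars and summing over $S$ then yields \eqref{eq: after perp}.

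The heart of the proof is thus the following combinatorial description of $e_N^\perp$ on a Macdonald polynomial presented as a filled diagram, which I would establish exactly as in \cite[Section~6]{KLO23}: for any filled diagram $(D,f)$ with $\widetilde H_{(D,f)}$ equal to a Macdonald polynomial $\widetilde H_\mu$ and any $0\le N\le|D|$,
\[
    e_N^\perp\,\widetilde H_{(D,f)}\;=\;\sum_{R\in\Sub(D,\,|D|-N)}\stat_{(D,f)}(\bar R)\,x^R .
\]
To prove this I would use the adjointness identity $e_N^\perp F[X]=\langle F[X+Y],\,e_N[Y]\rangle_Y$ for an auxiliary alphabet $Y$, expand $\widetilde H_{(D,f)}[X+Y]=\sum_{\sigma\colon D\to\mathbb{Z}_{\ge1}}\stat_{(D,f)}(\sigma)$ over fillings valued in the merged totally ordered alphabet $X\sqcup Y$ (legitimate since $\inv_D$ and $\maj_{(D,f)}$ depend only on the relative order of the entries), group the fillings according to the set of cells that receive a $Y$-letter, and pair against $e_N[Y]=s_{(1^N)}[Y]$; this pairing isolates precisely the configurations in which exactly $N$ cells carry a $Y$-letter and these $N$ letters are pairwise distinct. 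What remains is to replace the distinguished $N$-subset of cells (filled with the $N$ smallest labels) by its canonical reverse-reading-order representative $\bar R$ and to track how $\inv_D$ and $\maj_{(D,f)}$ transform — this is the manipulation performed in \cite[Section~6]{KLO23}, and it turns the surviving terms into sub-fillings $R\in\Sub(D,|D|-N)$ weighted by $\stat_{(D,f)}(\bar R)$. Applying this with $(D,f)=(\delta_S,f_S)$, so that $|D|=|\delta_S|=|\delta|-k$ and $|D|-N=|\lambda|$, gives the per-$S$ identity of the first paragraph.

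I expect the displayed $e_N^\perp$-identity to be the main obstacle. The tension is that $e_N^\perp\widetilde H_{(D,f)}$ depends only on the symmetric function $\widetilde H_{(D,f)}$, while its claimed value refers to the filled diagram $(D,f)$ itself; so the argument must genuinely use symmetry to collapse the sum over all placements of the $N$ smallest labels onto the single reverse-reading-order placement $\bar R$, all while keeping $\inv_D$ and $\maj_{(D,f)}$ under control, with the signed $h$-expansion of $e_N$ matching the over-counting introduced by the different placements. This is exactly where the proof ``imitates \cite[Section~6]{KLO23}'', and the care needed is to check that the hypotheses used there hold in the present situation — which is ensured by Theorem~\ref{thm: HHL formula} and \eqref{eq: cycling}, so that $\widetilde H_{(\delta_S,f_S)}=\widetilde H_{\delta^S}$ is a genuine Macdonald polynomial. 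Granting that identity, the rest of the lemma is a routine substitution.
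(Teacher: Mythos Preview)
Your reduction to the per-$S$ identity (pull the $\mathbb{C}(q,t)$-scalars through $e_N^\perp$ and then compute $e_N^\perp\widetilde H_{(\delta_S,f_S)}$ combinatorially) is exactly the paper's argument; the paper then dispatches that identity in one line as ``elementary'', while you flag it as the heart of the proof and sketch an argument.

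That sketch contains a real slip. The adjointness $e_N^\perp F[X]=\langle F[X+Y],\,e_N[Y]\rangle_Y$ is fine, but the next claim is not: pairing the $Y$-part against $e_N[Y]=s_{(1^N)}[Y]$ does \emph{not} ``isolate the configurations in which exactly $N$ cells carry a $Y$-letter and these $N$ letters are pairwise distinct.'' Picking out the distinct-$Y$-letter terms extracts the coefficient of $m_{(1^N)}[Y]$, which computes $\langle\,\cdot\,,h_{(1^N)}[Y]\rangle_Y=(h_1^\perp)^N F$, not $e_N^\perp F$; configurations with repeated $Y$-letters contribute (with signs) to $\langle\,\cdot\,,e_N[Y]\rangle_Y$ as well. (For instance $e_2^\perp h_2=0$, but your recipe applied to $h_2$ returns the nonzero squarefree part.) The standard device that converts this into a positive combinatorial statement is superization: apply $\omega_Y$ first (equivalently, use the super-HHL expansion of \cite{HHL05}) and read off the coefficient of $y^N$ for a single barred variable. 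Under the usual convention that equal barred letters compare as strictly decreasing in reading order, the resulting super fillings with $N$ barred cells have exactly the same $\stat$ as the fillings $\bar R$ with the reverse-reading-order labels on $D\setminus D'$, and one obtains $\sum_R\stat_{(D,f)}(\bar R)\,x^R$ on the nose. That is presumably the manipulation in \cite[Section~6]{KLO23} you are invoking, so your deference to that reference rescues the argument; but the mechanism you describe inline is not the correct one. With that correction your proposal and the paper's proof coincide.
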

\begin{proof}
For any nonnegative integer $m$, it is elementary to see
\begin{equation*}
    e^{\perp}_{|\delta_{S}|-k-m} \left(\sum_{\sigma \in \mathbb{Z}^{|\delta_{S}|}_{\geq 1}}\stat_{(\delta_{S},f_{S})}(\sigma)x^{\sigma}\right)=\sum_{R\in \Sub(\delta_{S},m)}\stat_{(\delta_{S},f_{S})}(R)x^{R},
\end{equation*}
and the proof follows.
\end{proof}
For indeterminates $z_1,z_2,\dots$ we consider the $z$-deformation of the right hand side of \eqref{eq: after perp}:
\begin{equation}\label{eq: z deform initial}
    (-1)^{|\tilde{\lambda}|}\sum_{S \in \binom{[n]}{k}} \dfrac{s_{\tilde{\lambda}}[z_S] \prod_{j \in S^c}{z_j}}{\prod_{\substack{i \in S \\ j \in S^c}} (z_j - z_i)} \sum_{R\in \Sub(\delta_{S},|\lambda|)}\stat_{(\delta_{S},f^{z}_{S})}(R)x^{R}.
\end{equation}
Since $w_i=T_{\delta^{(i)}}/T_{\delta}=q^{1-i}t^{i+1-N-n}$, we have $w_i=z_i$ under the specialization \eqref{eq: z_i values}. Therefore we have
\begin{equation}\label{eq: up toconstants}
    \eqref{eq: z deform initial} \xrightarrow[\text{Specialize by \eqref{eq: z_i values}}]{\text{}}   \left(e_{|\delta|-k-|\tilde{\lambda}|}^\perp \I_{\delta,\tilde{\lambda},k}\right)  
\end{equation}
Now our goal is to reduce \eqref{eq: z deform initial} to \eqref{eq: rec final}.

The following lemma follows from a simple computation so we leave it as an exercise to readers.
\begin{lem}\label{lem: for the longest permutation}
Let $w'_0$ and $w_0$ be the longest permutations of length $|\delta_S|$ and $|\Rec^{\leq N}|$. The following are true:

(1) the quantity $(\prod_{j \in S^c}{z_j})\stat_{(\delta_{S},f^{z}_{S})}(w'_0)$ is independent of the choice of $S\in \binom{[n]}{k}$

(2) the quantity $(\prod_{j \in S^c}{z_j})\stat_{(\Rec^{\leq N},g^{z}_{S})}(w_0)$ is independent of the choice of $S\in \binom{[n]}{k}$.

(3) the quantity $(\prod_{j \in S^c}{z_j})\stat_{(\delta_{S},f^{z}_{S})}(w'_0)$ specializes to $T_{\delta^{[n]}}$ under \eqref{eq: z_i values}.
\end{lem}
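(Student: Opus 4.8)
The plan is a direct computation that exploits the special structure of the longest permutation. Write $w$ for the longest element of $\mathfrak{S}_{|D|}$, viewed as the strictly decreasing word filled into a diagram $D$ in reading order; then the cell read earlier always receives the larger entry, so every attacking pair of $D$ is an inversion pair for $w$ and every non-bottom cell of $D$ is a descent of $w$. Hence for any filled diagram $(D,f)$,
\[
    \stat_{(D,f)}(w)=q^{a(D)}\prod_{u}f(u),
\]
where $a(D)$ denotes the number of attacking pairs of $D$ and the product runs over the non-bottom cells $u$ of $D$. Thus all three parts reduce to bookkeeping of three quantities: the $z$-monomial coming from the $z$-parts of $f^z_S$ (resp.\ $g^z_S$), the power of $q$ coming from the explicit $q$'s in $f^z_S$ (resp.\ $g^z_S$), and the integer $a(\delta_S)$ (resp.\ $a(\Rec^{\leq N})$).

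First I would compute the $z$-monomial. Since the numerator of $f^z_S(a,b)$ is $z_{N+n+1-a}$, depending only on the row, while the denominator depends only on the column, the exponent of each $z_m$ in $\prod_u f^z_S(u)$ is a telescoping sum over the columns of $\delta_S$. Evaluating this and multiplying by $\prod_{j\in S^c}z_j$, the partition identity $[m]=(S\cap[m])\sqcup(S^c\cap[m])$ makes the dependence on the choice of $S$ disappear: the exponent of $z_m$ comes out as $2m-N-n+1$ for $m\le n$, as $n$ for $n<m\le N+n-2$, and as $n-k$ for $m=N+n-1$. The identical computation, truncated to the top $N$ rows, shows that for $(\Rec^{\leq N},g^z_S)$ every $z_m$ with $m\le n$ appears with exponent $-(N-2)$ while the higher $z_m$'s behave as above; in both cases the monomial is independent of $S$.

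Next I would handle the powers of $q$. Each non-bottom cell of an $S^c$-column of $\delta_S$ contributes a factor $q$, whereas a cell of an $S$-column in row $a$ contributes a $q$ precisely when $N+n+1-a\notin S^c$; summing these, the only $S$-dependent part of the $q$-exponent of $\prod_u f^z_S(u)$ is $-\sum_{j\in S^c}j$. On the other hand, a direct count of the attacking pairs of $\delta_S$ — separating those lying within one row from those lying in two consecutive rows, and tracking the effect of cycling the $k$ columns indexed by $S$ to the right and raising them by one row — gives $a(\delta_S)=\sum_{j\in S^c}j+C$ with $C=C(n,k,N)$. The two copies of $\sum_{j\in S^c}j$ cancel, so $(\prod_{j\in S^c}z_j)\stat_{(\delta_S,f^z_S)}(w'_0)=q^{E}\prod_{m\le n}z_m^{2m-N-n+1}\prod_{n<m<N+n-1}z_m^{n}\cdot z_{N+n-1}^{n-k}$ for a constant $E=E(n,k,N)$, which is (1). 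For $(\Rec^{\leq N},g^z_S)$ the shape $\Rec^{\leq N}$ does not depend on $S$, so $a(\Rec^{\leq N})$ is a fixed integer, and every surviving $S$-column cell sits in a row $a$ with $N+n+1-a>n$ and therefore carries a $q$; hence the $q$-power is also $S$-independent, which is (2). For (3) it then remains to substitute \eqref{eq: z_i values}: inserting $z_m=q^{1-m}t^{m+1-N-n}$ for $m\le n$ and $z_m=q^{-n}t^{m+1-N-n}$ for $m>n$ into the closed form from (1) and collecting exponents gives $q^{n((\delta^{[n]})')}t^{n(\delta^{[n]})}=T_{\delta^{[n]}}$.

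The step I expect to be the main obstacle is the attacking-pair identity $a(\delta_S)=\sum_{j\in S^c}j+C(n,k,N)$. All the remaining steps are routine monomial bookkeeping, but this one requires a somewhat delicate case analysis of the non-partition diagram $\delta_S$ (a staircase-type shape in which the $k$ columns indexed by $S$ have been moved to the right and raised by one row), and in particular an accounting of how that raising redistributes the attacking pairs between consecutive rows.
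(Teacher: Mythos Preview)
The paper does not actually prove this lemma: it states that ``the following lemma follows from a simple computation so we leave it as an exercise to readers.'' Your proposal is therefore not competing with any argument in the paper; it is carrying out exactly the exercise the authors had in mind, and the approach is correct. The reduction $\stat_{(D,f)}(w_0)=q^{a(D)}\prod_{u\text{ non-bottom}}f(u)$ is the right first step, and your factorization of the $z$-monomial and $q$-exponent is accurate (I checked your exponent formula $2m-N-n+1$ for $z_m$ with $m\le n$, and the claim that the $S$-dependent part of the $q$-power in $\prod_u f^z_S(u)$ is exactly $-\sum_{j\in S^c}j$).

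The step you flag as the obstacle, $a(\delta_S)=\sum_{j\in S^c}j+C(n,k,N)$, is genuine but not deep. One clean way to handle it: the within-row attacking pairs depend only on the row sizes of $\delta_S$, which are visibly independent of $S$; for the between-row pairs, sum instead over ordered column pairs $(c,c')$ with $c>c'$ and count the rows $a$ with $c\in C_a$ and $c'\in C_{a-1}$, which reduces to an overlap length of two explicit intervals. A small case check (e.g.\ $n=2$, $k=1$, $N=2$, where $a(\delta_{\{1\}})=3$, $a(\delta_{\{2\}})=2$ and $\sum_{j\in S^c}j$ equals $2$ and $1$ respectively) confirms the identity. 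For part (3) there is also a slightly more conceptual shortcut: under the specialization \eqref{eq: z_i values} one has $(\delta_S,f^z_S)=(\delta_S,f_S)$, and since $\prod_{j\in S^c}w_j\cdot T_{\delta^S}=T_{\delta^{[n]}}$, it suffices to check $\stat_{(\delta_S,f_S)}(w'_0)=T_{\delta^S}$, which follows from the HHL-type description of $(\delta_S,f_S)$.
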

For the rest of the paper, we denote the quantity in Lemma \ref{lem: for the longest permutation} (1) and (2) by $C_1$ and $C_2$ respectively. 
\begin{definition}
Let $D=\Rec^{\leq m}$ for some $m\leq N$ and consider $R\in \Sub(D,\ell)$ denoted by $R:D'\rightarrow \mathbb{Z}_{>0}$. We say that a cell $c=(i,j)\in D'$ is bad if $c'=(i+1,j)\in \Des_{D}(\bar{R})$ where $\bar{R}$ is the associated full assignments. Note that every cell $c\in D'$ located in a top row is bad. We define $\gamma(R)$ to be a vector whose $i$-th entry is the number of bad cells in the $i$-th row. For example, we have $\gamma(R)=(1,0,1)$ for $R$ in Figure \ref{fig: sub filling}
\end{definition}

Now we prove Lemma \ref{lem: rec vanishing} and \ref{lem: consider rec}, key ingredients to obtain \eqref{eq: rec final}.
\begin{lem}\label{lem: rec vanishing}For vectors $\alpha$ and $\gamma$ in $\mathbb{Z}_{\geq 0}^{m}$, we have
\begin{enumerate}
\item $\left(\sum_{\substack{R\in \Sub(\Rec^{\leq m},\ell)\\ \rsum(R)=\alpha, \gamma(R)=\gamma}}\stat_{(\Rec^{\leq m},g^{z}_{S})}(R)x^{R}\right)$ is symmetric in variables $z_S$ and symmetric in variables $z_{S^{c}}$,
\item if $|\alpha|-|\gamma|<|\lambda|$ we have 
\begin{equation}\label{eq: rec vanishing}\sum_{S \in \binom{[n]}{k}} \dfrac{s_{\tilde{\lambda}}[z_S] \prod_{j \in S^c}{z_j}}{\prod_{\substack{i \in S \\ j \in S^c}} (z_j - z_i)} \sum_{\substack{R\in \Sub(\Rec^{\leq m},\ell)\\ \rsum(R)=\alpha, \gamma(R)=\gamma}}\stat_{(\Rec^{\leq m},g^{z}_{S})}(R)x^{R}=0.\end{equation}
\end{enumerate}
\end{lem}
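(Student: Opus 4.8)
Both assertions concern the symmetry and vanishing of the $z$-deformed truncated sum. For part (1), I would argue that the symmetry in the variables $z_S$ (respectively $z_{S^c}$) comes from the column-exchange rule applied to the columns of $\Rec^{\leq m}$ sitting over $S$ (respectively $S^c$). More precisely, recall that in $(\Rec^{\leq m}, g^z_S)$ the first $n-k$ columns involve the variables $z_{j_b}$ ($j_b \in S^c$) in the denominators, and the last $k$ columns involve the $z_{i_b}$ ($i_b \in S$); swapping two adjacent columns among the first $n-k$, or among the last $k$, is exactly an instance of the map $S_j$ of Definition~\ref{def: column exchange operator}, because the relevant ratios $f(a,b)/f(a,b+1)$ are independent of the row $a$ in those blocks (both columns have full height after truncation, and the height-$m$ columns over $S$ have a common bottom cell). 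By Proposition~\ref{prop: column exchange}, the bijection $\phi$ preserves $\stat$ (property $(\phi1)$) and the content in each row (property $(\phi2)$), hence preserves $\rsum(R)$; I still need that it preserves $\gamma(R)$ as well. Since $\gamma$ is recorded by the descent set $\Des$ restricted to the cells of the subdiagram $D'$, and $\phi$ preserves $\iDes(\std(\sigma))$ by $(\phi3)$ together with the row-contents by $(\phi2)$, the positions of the descents — hence the set of bad cells in each row — are preserved. Therefore the generating function with $\rsum$ and $\gamma$ fixed is invariant under adjacent transpositions of the $z_S$'s among themselves and of the $z_{S^c}$'s among themselves, which gives the claimed symmetry. (One must check carefully that the column-equivalence stays within $\Rec^{\leq m}$ and does not alter the diagram shape: this is where truncating to the first $m$ rows is essential, exactly as in Lemma~\ref{lem: necessary lemma for filled diagrams}.)

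For part (2), the idea is to feed the symmetry from part (1) into Lemma~\ref{lem: Schur orthogonal in rectangle}. Fix $\alpha$ and $\gamma$, and write
\[
    F_S(z) := \sum_{\substack{R \in \Sub(\Rec^{\leq m},\ell)\\ \rsum(R) = \alpha,\ \gamma(R) = \gamma}} \stat_{(\Rec^{\leq m},g^z_S)}(R)\, x^R .
\]
By part (1), $F_S$ is symmetric in $z_S$ and in $z_{S^c}$ separately. The key quantitative point is a degree bound: dividing out by the common factor $C_2$ of Lemma~\ref{lem: for the longest permutation}(2), the quantity $\bigl(\prod_{j \in S^c} z_j\bigr) F_S(z) / C_2$ is a \emph{polynomial} in the $z$'s whose degree in the variables $z_{S^c}$ is controlled by the number of cells of the subdiagram lying in the $S^c$-columns that are \emph{not} forced to the bottom, which is $|\alpha| - |\gamma|$ in an appropriate accounting (each bad cell contributes a ratio that, after clearing $C_2$, does not raise the $z_{S^c}$-degree, while each non-bad occupied cell does). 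Consequently $\bigl(\prod_{j\in S^c} z_j\bigr) F_S(z)/C_2$, as a symmetric polynomial in $z_{S^c}$ of degree $\le |\alpha| - |\gamma|$, is a $\mathbb{Z}[z_S]$-linear combination of Schur polynomials $s_\rho[z_{S^c}]$ with $|\rho| \le |\alpha|-|\gamma| < |\tilde\lambda| = k(n-k) - |\lambda|$. Now plug this expansion into the left-hand side of \eqref{eq: rec vanishing}: each term becomes a scalar (in the $z_S$-dependent coefficient times $C_2$) times
\[
    \sum_{S \in \binom{[n]}{k}} \frac{s_{\tilde\lambda}[z_S]\, s_\rho[z_{S^c}]}{\prod_{i\in S,\, j\in S^c}(z_j - z_i)},
\]
which vanishes by Lemma~\ref{lem: Schur orthogonal in rectangle} precisely because $|\rho| < |\tilde\lambda|$ forces $\rho \ne \widetilde{\tilde\lambda} = \lambda$ (indeed even the size is wrong), so $\delta_{\widetilde{\tilde\lambda},\rho} = 0$. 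Since this holds termwise, the whole sum is zero, which is \eqref{eq: rec vanishing}. (I should be mildly careful that the $z_S$-dependent coefficients can be expanded in $s_\sigma[z_S]$ and that the resulting double sum still matches the hypothesis of Lemma~\ref{lem: Schur orthogonal in rectangle}, which requires symmetry in both sets of variables — part (1) supplies exactly this.)

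\textbf{Main obstacle.} The delicate point is the \emph{degree bound} in the $z_{S^c}$ variables: one has to show that $\bigl(\prod_{j\in S^c} z_j\bigr)\stat_{(\Rec^{\leq m},g^z_S)}(R)/C_2$ contributes $z_{S^c}$-degree at most $|\rsum(R)| - |\gamma(R)|$. This requires bookkeeping the powers of the $z_{j_b}$'s appearing in the filling $f^z_S$: the filling entries in the first $n-k$ columns are $q z_{N+n+1-a}/z_{j_b}$, so each \emph{descent} in an $S^c$-column of the induced full filling $\bar R$ contributes a factor with a $z_{j_b}$ in the denominator and a $z_{N+n+1-a}$ in the numerator, and one must match these against the normalizing constant $C_2 = \bigl(\prod_j z_j\bigr)\stat(w_0)$ coming from the longest permutation. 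The bad cells are exactly those forced to be descents from below and are the ones that do \emph{not} create new $z_{S^c}$-degree after normalization, so the net $z_{S^c}$-degree is the number of occupied cells in the $S^c$-block minus the number of bad ones; translating "occupied cells in the $S^c$-block" into a uniform bound by $|\alpha| - |\gamma|$ (rather than something $S$-dependent) uses part (1) again, since symmetry lets us assume the worst-case placement. This is precisely the analogue of the key degree-counting step in \cite[Section 6]{KLO23}, and I expect it to be the place where the argument needs the most care; everything else is a formal consequence of Proposition~\ref{prop: column exchange} and Lemma~\ref{lem: Schur orthogonal in rectangle}.
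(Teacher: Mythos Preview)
Your overall plan for part (2) is right in spirit---degree bound plus Lemma~\ref{lem: Schur orthogonal in rectangle}---and matches the paper. But your argument for part (1) has a genuine gap, and this gap propagates into (2) because you rely on the symmetry from (1) there.

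\textbf{The gap in (1).} You claim the column-exchange bijection $\phi$ preserves $\gamma(R)$, and justify this by $(\phi3)$: ``$\phi$ preserves $\iDes(\std(\sigma))$ \ldots hence the set of bad cells in each row is preserved.'' This does not follow. The invariant $\iDes(\std(\sigma))$ records which \emph{consecutive values} $i,i+1$ appear in inverted order in the word; it says nothing about which \emph{positions} $(i,j),(i+1,j)$ in the diagram form a descent. The statistic $\gamma(R)$ is defined via $\Des_D(\bar R)$, i.e.\ descents between vertically adjacent cells in the diagram, and the row-content data from $(\phi2)$ is not enough to pin down which column a given entry sits in. There is no reason to expect $\phi$ to preserve $\gamma(R)$, and the paper does not claim it.

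The paper's route is different and worth noting. One first proves the weaker statement that the $\gamma$-\emph{unfixed} sum
\[
A_{\alpha,S}=\sum_{\substack{R\in \Sub(\Rec^{\leq m},\ell)\\ \rsum(R)=\alpha}}\stat_{(\Rec^{\leq m},g^{z}_{S})}(R)x^{R}
\]
is symmetric in $z_S$ and in $z_{S^c}$, using only $(\phi1)$ and $(\phi2)$ (which do suffice, since column exchange swaps the two $z$'s in the filling while fixing the row-sum). Then one observes that for each $R$,
\[
\stat_{(\Rec^{\leq m},g^{z}_{S})}(R)=\stat_{(\Rec^{\leq m},g^{z}_{S})}(w_0)\cdot \frac{q^{c}\prod_{i=1}^{n}z_i^{d_i}}{\prod_{i=1}^{N-1}z_{N+n-i}^{\alpha_i-\gamma_i}},
\]
so that the $\gamma$-fixed sum factors as $\stat(w_0)\cdot P_{\alpha,S}^{\gamma}(z_1,\dots,z_n)/\prod_i z_{N+n-i}^{\alpha_i-\gamma_i}$ with $P_{\alpha,S}^\gamma$ a polynomial of degree $|\alpha|-|\gamma|$ in $z_1,\dots,z_n$. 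The crucial point is that the denominators $\prod_i z_{N+n-i}^{\alpha_i-\gamma_i}$ involve only the \emph{auxiliary} variables $z_{n+1},\dots,z_{N+n-1}$, disjoint from $z_1,\dots,z_n$, and different $\gamma$'s give different such monomials. Hence the symmetry of $A_{\alpha,S}/\stat(w_0)$ in $z_S$ and $z_{S^c}$ forces each numerator $P_{\alpha,S}^\gamma$ to be symmetric separately. This separation-by-auxiliary-variables trick is what replaces your unsupported claim about $\phi$ preserving $\gamma$.

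\textbf{A slip in (2).} You write ``$|\rho|\le|\alpha|-|\gamma|<|\tilde\lambda|=k(n-k)-|\lambda|$,'' but the hypothesis is $|\alpha|-|\gamma|<|\lambda|$, not $<|\tilde\lambda|$. In Lemma~\ref{lem: Schur orthogonal in rectangle} with $s_{\tilde\lambda}[z_S]$ playing the role of $s_\lambda$, the vanishing condition on $s_\rho[z_{S^c}]$ is $|\rho|<k(n-k)-|\tilde\lambda|=|\lambda|$; so the bound you actually need (and have) is $|\rho|<|\lambda|$. Your conclusion $\rho\ne\lambda$ is correct, but the inequality chain as written is wrong. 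Also, the degree bound is most cleanly stated as the \emph{total} degree of $P_{\alpha,S}^\gamma$ in $z_1,\dots,z_n$, not the degree in $z_{S^c}$ alone; this matters when you expand $P$ as $\sum c_{\sigma,\rho}s_\sigma[z_S]s_\rho[z_{S^c}]$ and need $|\tilde\lambda|+|\sigma|+|\rho|<k(n-k)$ to apply Lemma~\ref{lem: Schur orthogonal in rectangle} to each Littlewood--Richardson component of $s_{\tilde\lambda}s_\sigma$.
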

\begin{proof}We denote $A_{\alpha,S}$ and $A_{\alpha,S}^{\gamma}$ to be
\begin{equation*}
    A_{\alpha,S}=\sum_{\substack{R\in \Sub(\Rec^{\leq m},\ell)\\ \rsum(R)=\alpha}}\stat_{(\Rec^{\leq m},g^{z}_{S})}(R)x^{R}, \qquad  A_{\alpha,S}^{\gamma}=\sum_{\substack{R\in \Sub(\Rec^{\leq m},\ell)\\ \rsum(R)=\alpha, \gamma(R)=\gamma}}\stat_{(\Rec^{\leq m},g^{z}_{S})}(R)x^{R}.
\end{equation*} 
For $S=\{i_1<\dots<i_k\}$ and $S^{c}=\{j_1<\dots<j_{n-k}\}$, we have $A^{\gamma}_{\alpha,S}=A^{\gamma}_{\alpha,[n-k+1,n]}(z_{j_1},\dots,z_{j_{n-k}},z_{i_1},\dots,z_{i_k})$ by the property of $g_{S}^{z}$. Therefore it is enough to show (1) when $S=[n-k+1,n]$. Set $S=[n-k+1,n]$ and we first show that $A_{\alpha,S}$ is symmetric in variables $z_S$ and symmetric in variables $z_{S^{c}}$. Let $(D',g')$ be the filled diagram obtained by applying the map $S_1$ (see Definition \ref{def: column exchange operator}) to $(\Rec^{\leq m},g^{z}_{S})$. Then $D'=\Rec^{\leq m}$ and the filling $g'$ can be obtained from $g^{z}_{S}$ by swapping $z_1$ and $z_2$. By Proposition \ref{prop: column exchange}, we have
\begin{equation}\label{eq: column exchange refined example}
    \sum_{\substack{R\in \Sub(\Rec^{\leq m},\ell)\\ \rsum(R)=\alpha}}\stat_{(\Rec^{\leq m},g^{z}_{S})}(R)x^{R}=\sum_{\substack{R\in \Sub(D',\ell)\\ \rsum(R)=\alpha}}\stat_{(D',g')}(R)x^{R}.
\end{equation}
The right hand side is obtained from the left hand side by swapping $z_1$ and $z_2$, this shows $A_{\alpha,S}$ is symmetric in $z_1$ and $z_2$. Applying the similar argument, we conclude that $A_{\alpha,S}$ is symmetric in variables $z_S$ and symmetric in variables $z_{S^{c}}$. 

Now pick any $R\in \Sub(\Rec^{\leq m},\ell)$ with $\rsum(R)=\alpha$ and $\gamma(R)=\gamma$. On the $(i+1)$-th row there are $(\alpha_i-\gamma_i)$ many cells that do not belong to $\Des_{\Rec^{\leq m}}(\bar{R})$. Since the filling $g^{z}_{S}(i+1,j)$ on the $(i+1)$-th row is of the form $\frac{q z_{N+n-i}}{z_a}$ for some $1\leq a\leq n$, we can write
\begin{equation*}
    \stat_{(\Rec^{\leq m},g^{z}_{S})}(R)=\stat_{(\Rec^{\leq m},g^{z}_{S})}(w_0)\left(\frac{q^{c}\prod_{i=1}^{n}z_i^{d_i}}{\prod_{i=1}^{N-1}z_{N+n-i}^{\alpha_i-\gamma_i}}\right)
\end{equation*}
for some $c$ and $d_i$'s, where $w_0$ is the longest permutation of length $|\Rec^{\leq m}|$. We conclude that there exsits a polynomial $P_{\alpha,S}^{\gamma}(z_1,z_2,\dots,z_n)$ of degree $(|\alpha|-|\gamma|)$ such that
\begin{equation*}
    A_{\alpha,S}^{\gamma}=\stat_{(\Rec^{\leq m},g^{z}_{S})}(w_0)\left(\frac{P_{\alpha,S}^{\gamma}}{\prod_{i=1}^{N-1}z_{N+n-i}^{\alpha_i-\gamma_i}}\right).
\end{equation*}
 From 
\begin{equation*}
    \frac{A_{\alpha,S}}{\stat_{(\Rec^{\leq m},g^{z}_{S})}(w_0)}= \frac{\sum_{\gamma}A_{\alpha,S}^{\gamma}}{\stat_{(\Rec^{\leq m},g^{z}_{S})}(w_0)}=\sum_{\gamma}\frac{P_{\alpha,S}^{\gamma}}{\prod_{i=1}^{N-1}z_{N+n-i}^{\alpha_i-\gamma_i}},
\end{equation*}
the left hand side is symmetric in variables $z_S$ and symmetric in variables $z_{S^{c}}$, which implies that each $P_{\alpha,S}^{\gamma}$  is symmetric in variables $z_S$ and symmetric in variables $z_{S^{c}}$. This establishes (1).

To prove (2), from the symmetry established in (1), we may simply write $P_{\alpha,S}^{\gamma}=P_{\alpha}^{\gamma}(z_{S^{c}},z_{S})$ for $P_{\alpha}^{\gamma}=P_{\alpha,[n-k+1,n]}^{\gamma}$. Since $C=\prod_{j\in S^{c}}z_j\left(\stat_{(\Rec^{\leq m},g^{z}_{S})}(w_0)\right)$ is independent of the choice of $S\in\binom{[n]}{k}$ by Lemma \ref{lem: for the longest permutation} (2), the left hand side of \eqref{eq: rec vanishing} equals 
\begin{equation*}
    \frac{C}{\prod_{i=1}^{N-1}z_{N+n-i}^{\alpha_i-\gamma_i}}\left(\sum_{S \in \binom{[n]}{k}} \dfrac{s_{\tilde{\lambda}}[z_S] P_{\alpha}^{\gamma}(z_S^{c},z_{S})}{\prod_{\substack{i \in S \\ j \in S^c}} (z_j - z_i)}\right).
\end{equation*}
Note that $P_{\alpha}^{\gamma}$ is of degree $|\alpha|-|\gamma|$ (in variables $z_1,z_2,\dots,z_n$). Therefore if $|\alpha|-|\gamma|<|\lambda|$, the above vanishes by Lemma \ref{lem: Schur orthogonal in rectangle}.
\end{proof}

\begin{lem}\label{lem: consider rec}
Let $\alpha$ be a vector such that $\alpha_m=0$ for some $m\leq N$ and $\sum_{i=1}^{m-1}\alpha_i<|\lambda|$. Then we have 
\begin{equation}\sum_{S \in \binom{[n]}{k}} \dfrac{s_{\tilde{\lambda}}[z_S] \prod_{j \in S^c}{z_j}}{\prod_{\substack{i \in S \\ j \in S^c}} (z_j - z_i)} \sum_{\substack{R\in \Sub(\delta_{S},\ell)\\\rsum(R)=\alpha}}\stat_{(\delta_{S},f^{z}_{S})}(R)x^{R}=0.
    \end{equation}
\end{lem}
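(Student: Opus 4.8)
The plan is to decouple the filling of $\delta_{S}$ along the empty row $m$ and then reduce to Lemma~\ref{lem: rec vanishing}. First I would record the effect of the hypothesis $\alpha_{m}=0$: no filled cell of $R$ lies in row $m$, so in the induced full assignment $\bar R$ each cell of row $m$ receives an unfilled value, which is smaller than every filled entry and, since the reverse reading order fills lower rows first, smaller than the entry directly above it in row $m+1$. Hence every cell of row $m+1$ is a descent of $\bar R$ and every attacking pair straddling rows $m$ and $m+1$ is an inversion --- and exactly the same holds for the longest filling $w_{0}'$ of $\delta_{S}$. Writing $R=R_{\flat}\sqcup R_{\sharp}$ for the restrictions of $R$ to the rows below and above row $m$, and comparing relative orders row by row, the statistic factors as
\[
    \stat_{(\delta_{S},f^{z}_{S})}(\bar R)=\stat_{(\Rec^{\leq m},g^{z}_{S})}(\overline{R_{\flat}})\cdot\Xi_{S}\cdot\stat_{(\delta^{>m}_{S},\,f^{z}_{S}|_{>m})}(\overline{R_{\sharp}}),
\]
where $\delta^{>m}_{S}$ is the sub-diagram of $\delta_{S}$ on the rows $\geq m+1$, the bars denote the induced full assignments there, and $\Xi_{S}$ (the row-$(m+1)$ contribution together with the straddling inversions) depends only on $S$. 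The same factorization applied to $w_{0}'$, together with Lemma~\ref{lem: for the longest permutation}(1), would show that $\Xi_{S}$ times the longest-filling value of $(\delta^{>m}_{S},f^{z}_{S}|_{>m})$ equals $C_{1}/C$, where $C=(\prod_{j\in S^{c}}z_{j})\,\stat_{(\Rec^{\leq m},g^{z}_{S})}(w_{0,m})$ is $S$-independent by Lemma~\ref{lem: for the longest permutation}(2).

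Summing over all $R$ with $\rsum(R)=\alpha$ --- equivalently, independently over $R_{\flat}\in\Sub(\Rec^{\leq m},\ell_{\flat})$ supported in the first $m-1$ rows, with $\ell_{\flat}=\sum_{i<m}\alpha_{i}<|\lambda|$, and over $R_{\sharp}$ --- the inner sum of the statement becomes $\Xi_{S}\cdot A_{\flat,S}\cdot B_{\sharp,S}$, the product of the lower and upper generating functions. For the lower factor I would repeat the argument of Lemma~\ref{lem: rec vanishing}: grouping further by $\gamma=\gamma(R_{\flat})$ gives $A^{\gamma}_{\flat,S}=\stat_{(\Rec^{\leq m},g^{z}_{S})}(w_{0,m})\cdot P^{\gamma}(z_{S^{c}};z_{S})\big/\prod_{i}z_{N+n-i}^{(\alpha_{\flat})_{i}-\gamma_{i}}$, with $P^{\gamma}$ a polynomial in $z_{1},\dots,z_{n}$, symmetric in $z_{S}$ and in $z_{S^{c}}$ separately, of degree $|\alpha_{\flat}|-|\gamma|\leq\ell_{\flat}<|\lambda|$. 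For the upper factor I would invoke Lemma~\ref{lem: necessary lemma for filled diagrams}: the filled diagrams $(\delta^{>m}_{S},f^{z}_{S}|_{>m})$ are column-equivalent for every $S$, so Proposition~\ref{prop: column exchange} --- applied to sub-fillings exactly as in the proof of Lemma~\ref{lem: rec vanishing}(1), using that column exchange preserves content per row --- would show that $\widetilde{B}_{\sharp,S}:=(C/C_{1})\,\Xi_{S}\,B_{\sharp,S}$ is symmetric in $z_{S}$ and in $z_{S^{c}}$ separately and carries no power of $z_{1},\dots,z_{n}$ beyond a fixed monomial in the ``row'' variables $z_{n+1},\dots,z_{N+n}$.

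Assembling, the factors $\stat_{(\Rec^{\leq m},g^{z}_{S})}(w_{0,m})$ cancel and the constants $C$ and $C_{1}/C$ leave the $S$-sum, so the left-hand side of the statement equals
\[
    C_{1}\sum_{\gamma}\frac{1}{\prod_{i}z_{N+n-i}^{(\alpha_{\flat})_{i}-\gamma_{i}}}\ \sum_{S\in\binom{[n]}{k}}\frac{s_{\tilde{\lambda}}[z_{S}]\,\widetilde{B}_{\sharp,S}(z_{S^{c}};z_{S})\,P^{\gamma}(z_{S^{c}};z_{S})}{\prod_{i\in S,\,j\in S^{c}}(z_{j}-z_{i})}.
\]
For each $\gamma$ the polynomial $\widetilde{B}_{\sharp,S}\,P^{\gamma}$ in $z_{1},\dots,z_{n}$ is symmetric in $z_{S}$ and in $z_{S^{c}}$ and has degree $\leq\ell_{\flat}<|\lambda|$; expanding it into products $s_{\mu}[z_{S^{c}}]\,s_{\nu}[z_{S}]$ and applying Lemma~\ref{lem: Schur orthogonal in rectangle} as at the end of the proof of Lemma~\ref{lem: rec vanishing}(2) --- a nonvanishing term would force $|\mu|+|\nu|=|\lambda|$, contradicting $|\mu|+|\nu|\leq\ell_{\flat}<|\lambda|$ --- each summand, and hence the whole expression, vanishes.

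The part I expect to be the real obstacle is the degree statement for the upper block $\widetilde{B}_{\sharp,S}$: one must verify that, once it is normalized against the longest filling by means of Lemmas~\ref{lem: for the longest permutation} and~\ref{lem: necessary lemma for filled diagrams}, the rows above row $m$ only reshuffle the row variables $z_{>n}$ and carry no net power of $z_{1},\dots,z_{n}$, so that the degree deficiency produced by the lower block is not destroyed; should a naive count fail here, the remedy is to iterate the decoupling at every empty row of $R$ lying in the first $N$ rows before invoking Lemma~\ref{lem: rec vanishing}. Granting this point, the remainder is a direct adaptation of the proof of Lemma~\ref{lem: rec vanishing}.
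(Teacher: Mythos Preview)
Your strategy---split along the empty row $m$, control the lower block via Lemma~\ref{lem: rec vanishing}, and handle the upper block through Lemma~\ref{lem: necessary lemma for filled diagrams} and Proposition~\ref{prop: column exchange}---is exactly the paper's. The difference is that you make the upper block much harder than it needs to be, and the ``real obstacle'' you flag is in fact not there.

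The key point you are missing is that both your middle factor $\Xi_{S}$ and the upper generating function $B_{\sharp,S}$ are \emph{independent of $S$}, not merely symmetric in $z_{S}$ and $z_{S^{c}}$ with controlled degree. For $\Xi_{S}$: the product of the row-$(m{+}1)$ filling values together with the $q$-power from the straddling inversions is a quantity that does not depend on $S$ (a short direct check, using that for $a=m{+}1\le N$ the row index $N{+}n{+}1{-}a$ interacts with $S$ only through a $q$-factor which is exactly compensated by the shift in the missing column). For $B_{\sharp,S}$: Lemma~\ref{lem: necessary lemma for filled diagrams} says the filled diagrams above row $m$ are column-equivalent for all $S$, and Proposition~\ref{prop: column exchange} then gives a bijection preserving $\stat$ and row content, so the generating function $\sum_{R_{2}}\stat_{(D_{S},f'_{S})}(R_{2})x^{R_{2}}$ with fixed $\rsum$ is literally the same element for every $S$. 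Once you know this, $\Xi_{S}\cdot B_{\sharp,S}$ pulls out of the $S$-sum as an honest constant, and Lemma~\ref{lem: rec vanishing}(2) applied to the lower block finishes the proof in one line---no degree bookkeeping on the upper block, no normalization by $C_{1}/C$, and no iteration over empty rows is needed.

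So your argument would go through, but the detour through $\widetilde{B}_{\sharp,S}$ and its purported degree-zero property in $z_{1},\dots,z_{n}$ is unnecessary (and, as you suspected, awkward to justify directly): the cleaner statement is $S$-independence, which is what column equivalence actually delivers.
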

\begin{proof}
Recall that $(\Rec^{\leq m},g^{z}_{S})$ is a sub filled diagram of $(\delta_{S},f^{z}_{S})$ given by restricting to the first $m$ rows. We denote $(D_S,f'_S)$ to be a remaining sub filled diagram of $(\delta_{S},f^{z}_{S})$ (obtained by restricting to the $(m+1)$-th row and above). Note that $R\in \Sub(\delta_{S},\ell)$ naturally induces $R_1\in \Sub(\Rec^{\leq m},\ell')$ and $R_2\in \Sub(D_{S},\ell'')$ by restriction. Since $\alpha_m=0$, there exists a constant $C$ (independent of $S$) such that
\begin{equation*}
    \stat_{(\delta_{S},f^{z}_{S})}(R)=C\stat_{(\Rec^{\leq m},g^{z}_{S})}(R_1)\stat_{(D_S,f'_S)}(R_2).
\end{equation*}
Therefore we have 
\begin{equation*}
  \sum_{\substack{R\in \Sub(\delta_{S},\ell)\\\rsum(R)=\alpha}}\stat_{(\delta_{S},f^{z}_{S})}(R)x^{R}=C\left(\sum_{\substack{R_1\in \Sub(\Rec_{\leq m},\ell')\\\rsum(R_1)=\alpha^{(1)}}}\stat_{(\Rec^{\leq m},g^{z}_{S})}(R_1)x^{R_1}\right)\left(\sum_{\substack{R_2\in \Sub(D_S,\ell'')\\\rsum(R_2)=\alpha^{(2)}}}\stat_{(D_{S},f'_{S})}(R_2)x^{R_2}\right)  
\end{equation*}
where $\alpha^{(1)}=(\alpha_1,\alpha_2,\dots,\alpha_m)$ and $\alpha^{(2)}=(\alpha_{m+1},\dots)$. Note that $\left(\sum_{\substack{R_2\in \Sub(D_S,\ell'')\\\rsum(R_2)=\alpha^{(2)}}}\stat_{(D_{S},f'_{S})}(R_2)x^{R_2}\right)$ is independent of the choice of $S$ by Lemma \ref{lem: necessary lemma for filled diagrams} and Proposition \ref{prop: column exchange}. Now Lemma \ref{lem: rec vanishing} (2) completes the proof as $|\alpha^{(1)}|<|\lambda|$.
\end{proof}
By the previous lemma, in \eqref{eq: z deform initial}, it is enough to consider $R\in \Sub(\delta_{S},|\lambda|)$ such that $\rsum(R)=\alpha$ satisfies $\alpha_i=0$ for $i\geq N$. Otherwise we have $\sum_{i=1}^{N-1}{\alpha_i}<|\lambda|$ and there exists $\alpha_m=0$ for $m\leq N$ as $N>|\lambda|$. We can naturally regard such $R$ as an element of $ \Sub(\Rec^{\leq N},|\lambda|)$ by restriction. The transition ratio of $\stat$ is given by
\begin{equation*}
    \frac{C_1}{C_2}=\frac{\stat_{(\delta_S,f^{z}_{S})}(w'_0)}{\stat_{(\Rec^{\leq N},g^{z}_{S})}(w_0)}=\frac{\stat_{(\delta_S,f^{z}_{S})}(R)}{\stat_{(\Rec^{\leq N},g^{z}_{S})}(R)}.
\end{equation*}
 Therefore \eqref{eq: z deform initial} becomes
\begin{equation*}
    (-1)^{|\tilde{\lambda}|} \frac{C_1}{C_2}  \sum_{S \in \binom{[n]}{k}} \dfrac{s_{\tilde{\lambda}}[z_S] \prod_{j \in S^c}{z_j}}{\prod_{\substack{i \in S \\ j \in S^c}} (z_j - z_i)} \sum_{\substack{R\in \Sub(\Rec^{\leq N},|\lambda|)}}\stat_{(\Rec^{\leq N},g^{z}_{S})}(R)x^{R}
\end{equation*}
and by Lemma \ref{lem: rec vanishing} (2), this equals to 
\begin{equation}\label{eq: rec final}
    (-1)^{|\tilde{\lambda}|}\frac{C_1}{C_2}  \sum_{S \in \binom{[n]}{k}} \dfrac{s_{\tilde{\lambda}}[z_S] \prod_{j \in S^c}{z_j}}{\prod_{\substack{i \in S \\ j \in S^c}} (z_j - z_i)} \sum_{\substack{R\in \Sub(\Rec^{\leq N},|\lambda|)\\ \gamma(R)=0}}\stat_{(\Rec^{\leq N},g^{z}_{S})}(R)x^{R}.
\end{equation}

\subsection{Toward the Jacobi-Trudi type formula}\label{sub: 6.3}
In this section, we complete the proof of Theorem \ref{thm: main theorem} (c), starting from \eqref{eq: rec final}. Let $B_S$ to be 
\begin{equation*}
    B_S=\frac{1}{\stat_{(\Rec^{\leq N},g^{z}_{S})}(w_0)}\sum_{\substack{R\in \Sub(\Rec^{\leq N},|\lambda|)\\ \gamma(R)=0}}\stat_{(\Rec^{\leq N},g^{z}_{S})}(R)x^{R}.
\end{equation*}
Then \eqref{eq: rec final} equals to 
\begin{equation}\label{eq: rec final2}
    (-1)^{|\tilde{\lambda}|}C_1 \times \sum_{S \in \binom{[n]}{k}} \dfrac{s_{\tilde{\lambda}}[z_S] B_S}{\prod_{\substack{i \in S \\ j \in S^c}} (z_j - z_i)}. 
\end{equation}
Note that $B_S$ is symmetric in variables $z_S$ and symmetric in variable $z_{S^{c}}$ by Lemma \ref{lem: for the longest permutation} (2) and Lemma \ref{lem: rec vanishing} (1). Therefore letting $B=B_{[n-k+1,n]}$, we can  write $B_S=B(z_{S^{C}},z_S)$. The following lemma says that understanding a monomial coefficient of $B$ is crucial.
\begin{lem}\label{lem: key connection JT formula} Let $\lambda$ be a partition inside $(n-k) \times k$ rectangle and $P(z_1,z_2,\dots,z_n)$ be a polynomial function of degree $|\lambda|$ that is symmetric in variables $z_1,z_2,\dots,z_{n-k}$ and symmetric in variables $z_{n-k+1},z_{n-k+2},\dots,z_{n}$. Consider an $n$ by $n$ matrix $V$ given by 
\begin{align*}
    V_{i,j}=\begin{cases*}
        (\lambda_{n-k+1-j})+j-i \qquad \text{if $j\leq n-k$}\\
        j-i \qquad \text{if $j>n-k$}.
    \end{cases*}
\end{align*}
Then we have 
\begin{equation*}
     \sum_{S \in \binom{[n]}{k}} \dfrac{s_{\tilde{\lambda}}[z_S]P(z_{S^{c}},z_S)}{\prod_{\substack{i \in S \\ j \in S^c}} (z_j - z_i)}= (-1)^{|\tilde{\lambda}|}\sum_{\sigma\in\mathfrak{S}_n}(-1)^{\sgn(\sigma)}[\prod_{i=1}^{n}z_i^{V_{\sigma(i),i}}]P(z_1,z_2,\dots,z_n)
\end{equation*}
where notation $[z_1^{c_1}z_2^{c_2}\cdots z_n^{c_n}]P(z_1,z_2,\dots,z_n)$ on the right-hand side represents the operation of taking the coefficient.

\end{lem}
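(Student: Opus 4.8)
The plan is to follow the same strategy used in the proof of Lemma~\ref{lem: Schur orthogonal in rectangle}, namely to expand the Schur polynomial $s_{\tilde\lambda}[z_S]$ as a ratio of alternants and then apply Laplace expansion along complementary minors. First I would write $s_{\tilde\lambda}[z_S] = a_{\tilde\lambda + \st_k}(z_S)/a_{\st_k}(z_S)$ via Jacobi's bi-alternant formula, and similarly observe that the Vandermonde product $\prod_{i\in S, j\in S^c}(z_j - z_i)$ times $a_{\st_k}(z_S)\, a_{\st_{n-k}}(z_{S^c})$ reassembles, up to the sign $\sgn(S) = (-1)^{(\sum_{r=k+1}^n r) - \sum_{s\in S} s}$, into the full Vandermonde $a_{\st_n}(z_{[n]})$. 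The subtlety compared to Lemma~\ref{lem: Schur orthogonal in rectangle} is that $P(z_{S^c}, z_S)$ is not a product of a polynomial in $z_S$ with one in $z_{S^c}$; so instead of factoring, I would expand $P$ in the monomial basis: write $P(z_1, \dots, z_n) = \sum_{\mathbf{c}} p_{\mathbf{c}}\, z_1^{c_1}\cdots z_n^{c_n}$ and group the exponent vector $\mathbf{c}$ according to which coordinates land in $S^c$ (the first $n-k$ slots of $P$'s argument) versus $S$ (the last $k$ slots).

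Next I would combine everything over the common denominator $a_{\st_n}(z_{[n]})$. The left-hand side becomes
\[
    \frac{1}{a_{\st_n}(z_{[n]})}\sum_{S\in\binom{[n]}{k}} \sgn(S)\, a_{\tilde\lambda + \st_k}(z_S)\, a_{\st_{n-k}}(z_{S^c})\, P(z_{S^c}, z_S).
\]
Here I would substitute the monomial expansion of $P$ and use that $a_{\mu}(z_S)\, z^{\text{(monomial in }z_S)}$ is, up to sign and reindexing, again an alternant — more precisely, multiplying the alternant $a_{\tilde\lambda+\st_k}$ by a symmetric-in-$z_S$ factor coming from the symmetry hypothesis on $P$, and similarly for the $z_{S^c}$ part, lets me rewrite each summand as a product of two alternants in complementary variable sets. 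Then Laplace expansion along complementary minors collapses the sum over $S$ into a single alternant $a_{(\ast, \ast)}(z_{[n]})$ in all $n$ variables, and dividing by $a_{\st_n}(z_{[n]})$ picks out precisely the coefficient extraction: an alternant $a_{\mathbf{v}}(z_{[n]})/a_{\st_n}(z_{[n]})$ is nonzero exactly when $\mathbf{v}$ is a permutation of $\st_n$, contributing the sign of that permutation. Bookkeeping the exponent shifts — $\tilde\lambda_j + k - j$ in the first $k$ columns of the would-be staircase and $n-k, n-k-1, \dots$ in the rest — is exactly what produces the matrix $V_{i,j}$ in the statement, and the overall sign $(-1)^{|\tilde\lambda|}$ comes from reversing $\tilde\lambda + \st_k$ to match $\st_n$, just as in Lemma~\ref{lem: Schur orthogonal in rectangle}.

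The main obstacle I anticipate is the careful alignment of indices and signs: one must check that the symmetry of $P$ in $z_1,\dots,z_{n-k}$ and in $z_{n-k+1},\dots,z_n$ is genuinely enough to turn every monomial $\times$ alternant into a single alternant (antisymmetrization of a symmetric polynomial times an alternant is an alternant, so only the symmetric part of $P$ with respect to each block survives the antisymmetrization — which is consistent, since the left-hand side only sees $P$ through these blocks), and that the exponent vector appearing after Laplace expansion matches $V_{\sigma(i),i}$ with the claimed indexing $V_{i,j} = \lambda_{n-k+1-j} + j - i$ for $j \le n-k$ and $V_{i,j} = j-i$ for $j > n-k$. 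Concretely I would verify: the $j$-th column of $V$ (for $j\le n-k$) should encode the part $\tilde\lambda_{?} + \st_k$ re-expressed through $\lambda$ via $\tilde\lambda$'s definition inside $R(n,k)$, and the reversal conventions in $\tilde{\cdot}$ must be tracked so the staircase-rearrangement condition reads off correctly. Once this indexing is pinned down, the rest is the same alternant manipulation as before; I would present it as a direct adaptation of the proof of Lemma~\ref{lem: Schur orthogonal in rectangle} with the extra monomial expansion inserted.
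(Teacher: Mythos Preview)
Your plan is correct and will work; it differs from the paper's proof in packaging rather than in substance. The paper expands $P$ in the Schur basis of the second block, $P = \sum_\mu Q_\mu(z_1,\dots,z_{n-k})\, s_\mu(z_{n-k+1},\dots,z_n)$, applies Lemma~\ref{lem: Schur orthogonal in rectangle} termwise (the Littlewood--Richardson constants that appear assemble into a skew Schur function $s_{\tilde\mu/(\tilde\lambda)'}$), and then uses Jacobi--Trudi on both $s_\mu$ and $s_{\tilde\mu/(\tilde\lambda)'}$, paired via the Hall inner product, to reach the monomial-coefficient sum over $\sigma \in \mathfrak{S}_n$. Your alternant/Laplace route bypasses the Hall inner product and the skew-Schur step and extracts the coefficients directly; this is a bit more elementary, while the paper's version is more modular (it reuses Lemma~\ref{lem: Schur orthogonal in rectangle} as a black box and stays in standard symmetric-function language). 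Two points to make explicit in your write-up. First, the claim that $a_{\mathbf v}(z_{[n]})/a_{\st_n}(z_{[n]})$ vanishes unless $\mathbf v$ is a permutation of $\st_n$ relies on the degree hypothesis $\deg P \le |\lambda|$, which forces the numerator to have degree at most $\binom{n}{2}$; without that bound the ratio is a nonzero Schur-type polynomial, not a constant. Second, since a single monomial of $P$ is not block-symmetric, your ``expand $P$ in the monomial basis'' step really amounts to expanding in $m_\mu \otimes m_\nu$ (or, equivalently, Schur~$\otimes$~Schur) before multiplying by the alternants --- which is exactly the expansion the paper writes down, so the two arguments converge there.
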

\begin{proof}
We denote $<-,->^{(1)}$ (resp $<-,->^{(2)}$) to be the usual Hall inner product on symmetric functions with variables $z_1,z_2,\dots,z_{n-k}$ (resp $z_{n-k+1},z_{n-k+2},\dots,z_{n}$). First we expand $P$ in terms of the Schur functions in variables $z_{n-k+1},z_{n-k+2},\dots,z_{n}$:
\begin{equation*}
    P=\sum_{\mu}Q_{\mu}(z_1,z_2,\dots,z_{n-k})s_{\mu}(z_{n-k+1},z_{n-k+2},\dots,z_{n}).
\end{equation*}
Letting $s_{\tilde{\lambda}}s_{\mu}=\sum_{\nu}c_\nu s_{\nu}$, by Lemma \ref{lem: Schur orthogonal in rectangle} we have
\begin{equation*}
     \sum_{S \in \binom{[n]}{k}} \dfrac{s_{\tilde{\lambda}}(z_S)Q_{\mu}(z_{S^{c}})s_{\mu}(z_{S})}{\prod_{\substack{i \in S \\ j \in S^c}} (z_j - z_i)}=(-1)^{|\tilde{\lambda}|+|\mu|} <Q_{\mu},\sum_{\nu}c_\nu s_{\tilde{\nu}}>^{(1)}=(-1)^{|\tilde{\lambda}|+|\mu|} <Q_{\mu},s_{\tilde{\mu}/(\tilde{\lambda})'}>^{(1)}
\end{equation*}
where the last equality used $\sum_{\nu}c_{\nu}s_{\tilde{\nu}}=s_{\Box}(\omega(s_{\tilde{\lambda}}s_{\mu}))^{\perp}=s_{\tilde{\mu}/(\tilde{\lambda})'}$ for a rectangle $\Box=R(n,k)$.
We may only consider the case when $\mu$ is inside $R(n,n-k)$. Since $Q_{\mu}=<P,s_{\mu}>^{(2)}$, expanding $s_{\mu}$ in terms of $h$ using Jacobi-Trudi formula gives
\begin{equation*}
    Q_{\mu}=\sum_{\tau\in\mathfrak{S}_k}[\prod_{i=1}^{k}z_{n-k+i}^{V'_{\tau(i),i}}]P
\end{equation*}
where $V'=V_{A,[n-k+1,n]}$ is a submatrix for some $A\in \binom{[n]}{k}$. Again using a Jaboci-Trudi formula for $s_{\tilde{\mu}/(\tilde{\lambda})'}$ gives 
\begin{equation*}
    <Q_{\mu},s_{\tilde{\mu}/(\tilde{\lambda})'}>^{(1)}=\sum_{\tau\in\mathfrak{S}_{n-k}}[\prod_{i=1}^{n-k}z_{i}^{V''_{\tau(i),i}}]Q_{\mu}
\end{equation*}
where $V''=V_{A^{c},[1,n-k]}$. Now summing over all possible $\mu$ inside a rectangle $R(n,n-k)$ is the same as summing over all $A\in \binom{[n]}{k}$ and the proof is complete. 
\end{proof}

Now we describe a monomial coefficient of $B=B_{[n-k+1,n]}$. For $R\in \Sub(\Rec^{\leq N},|\lambda|)$ denoted by $R:D\rightarrow \mathbb{Z}_{\geq1}$, we associate a tuple $\eta(R)=(L_1,L_2,\dots,L_n)$ where $L_i$ is a (finite) subset of $\mathbb{Z}_{\geq 0}\times \mathbb{Z}_{\geq 1}$ given by: $(c,d)\in L_i$ if and only if $(c+1,i)\in D$ and $R(c+1,i)=d$. For example, $\eta(R)=(\{(0,3),(1,4)\},\{\},\{\},\{(1,1),(2,1)\})$ for $R$ in Figure \ref{fig: sub filling}. Note that $|\eta(R)_i|=\rsum(R)_i$
\begin{lem}\label{lem: lemmas for reduced assignments}
Given $R\in \Sub(\Rec^{\leq N},|\lambda|)$ and $\gamma(R)=0$, let $\alpha=\rsum(R)$ and $\beta=\csum(R)$. Then
\begin{enumerate}
    \item denoting $\eta(R)=(L_1,L_2,\dots,L_n)$, we have 
$L_i\in \mathbf{C}_{\beta_i}$ if $i\leq n-k$ and $L_i\in \hat{\mathbf{C}}_{\beta_i}$ if $i> n-k$
    \item for $S=\{n-k+1,n-k+2,\dots,n\}$ we have
    \begin{equation*}
        \stat_{(\Rec^{\leq N},g^{z}_{S})}(R)=\frac{ \stat_{(\Rec^{\leq N},g^{z}_{S})}(w_0)}{q^{n|\lambda|}\prod_{i=1}^{N-1}z_{N+n-i}^{\alpha_i}} q^{\dinv(\eta(R))}\prod_{i=1}^{n}z_i^{\beta_i}.
    \end{equation*}
\end{enumerate}
\end{lem}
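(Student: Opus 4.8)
The plan is to fix the particular subset $S=\{n-k+1,\dots,n\}$ once and for all, as the statement permits; then $S^c=\{1,\dots,n-k\}$, and in rows $1,\dots,N$ one always has $N+n+1-a>n-k$, so the exceptional branch in the definition of $f^z_S$ never occurs and every cell $(a,c)$ of $\Rec^{\leq N}$ carries the single clean filling $g^z_S(a,c)=q\,z_{N+n+1-a}/z_c$. The first preparatory step is to unpack $\gamma(R)=0$ into: (i) the subdiagram $D'$ on which $R$ is defined meets no cell of the top row of $\Rec^{\leq N}$; and (ii) whenever $(r,c)$ and $(r+1,c)$ both lie in $D'$ we have $R(r+1,c)\le R(r,c)$. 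From the recipe for $\bar R$ (values on $D'$ all exceed $|D|-|\lambda|$, while the values on $D\setminus D'$ run over $1,\dots,|D|-|\lambda|$ in reverse reading order), I would then record the two facts used repeatedly below: a cell of $\Rec^{\leq N}$ is a descent of $\bar R$ exactly when its lower neighbour lies in $D\setminus D'$; and an attacking pair $(u,v)$ (with $u$ the attacker) fails to be an inversion of $\bar R$ exactly when $v\in D'$ and either $u\notin D'$, or $u\in D'$ with $R(u)\le R(v)$.

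For part (1) I would argue column by column. If the cells of $D'$ in column $i$ sit in rows $r_1<\dots<r_{\beta_i}$, then $\eta(R)_i=L_i=\{(r_t-1,R(r_t,i))\}_t$, and consecutive elements $(r_t-1,R(r_t,i))$, $(r_{t+1}-1,R(r_{t+1},i))$ satisfy $\prec_{\PP}$: if $r_{t+1}>r_t+1$ the first coordinates differ by at least $2$, while if $r_{t+1}=r_t+1$ then (ii) gives $R(r_t,i)\ge R(r_t+1,i)$, which is precisely the remaining defining case of $\prec_{\PP}$. Hence $L_i\in\mathbf{C}_{\beta_i}$; and when $i>n-k$ the corresponding column of $\Rec^{\leq N}$ has no cell in row $1$, so $r_1\ge 2$ and $a_1=r_1-1>0$, giving $L_i\in\hat{\mathbf{C}}_{\beta_i}$.

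For part (2) I would compute the ratios $\maj_{(\Rec^{\leq N},g^z_S)}(\bar R)/\maj_{(\Rec^{\leq N},g^z_S)}(w_0)$ and $\inv_{\Rec^{\leq N}}(\bar R)/\inv_{\Rec^{\leq N}}(w_0)$ separately, using that for the decreasing-in-reading-order word $w_0$ every cell with a lower neighbour is a descent and every attacking pair is an inversion. For $\maj$: the descents present for $w_0$ but absent for $\bar R$ are exactly the cells whose lower neighbour lies in $D'$, and by (i) every cell of $D'$ itself has an upper neighbour, so dividing out the corresponding fillings $q\,z_{N+n-b}/z_c$ yields $\maj(\bar R)/\maj(w_0)=\prod_{(b,c)\in D'}\tfrac{z_c}{q\,z_{N+n-b}}=\dfrac{\prod_c z_c^{\beta_c}}{q^{|\lambda|}\prod_b z_{N+n-b}^{\alpha_b}}$. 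For $\inv$: the attacking pairs that are non-inversions of $\bar R$ split as $X$ (those with $v\in D'$, $u\notin D'$) plus $Y$ (those with $u,v\in D'$ and $R(u)\le R(v)$). A direct count—using (i), so that the row above any cell of $D'$ is a full length-$n$ row while the length-$(n-k)$ bottom row only carries columns $1,\dots,n-k$—shows that every $v\in D'$ is attacked by exactly $n-1$ cells of $\Rec^{\leq N}$, whence $X=(n-1)|\lambda|-T$ and $Y=T-I$, where $T$ is the number of attacking pairs inside $D'$ and $I=\#\{(u,v):u,v\in D',\ u\text{ attacks }v,\ R(u)>R(v)\}$; thus $\inv(\bar R)/\inv(w_0)=q^{\,I-(n-1)|\lambda|}$. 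Finally I would identify $I=\dinv(\eta(R))$: there are no attacking pairs within a single column, and for $i<j$ an attacking pair between columns $i$ and $j$ is either a same-row pair (matching the summand $\chi(a=a')\chi(b>b')$ of $\dinv(L_i,L_j)$) or a pair with the column-$j$ cell one row above the column-$i$ cell (matching $\chi(a+1=a')\chi(b<b')$), and in each case ``the attacker has the larger $R$-value'' is exactly the indicated strict inequality. Multiplying the two ratios and using $(n-1)|\lambda|+|\lambda|=n|\lambda|$ yields the asserted formula for $\stat_{(\Rec^{\leq N},g^z_S)}(R)=\stat_{(\Rec^{\leq N},g^z_S)}(\bar R)$.

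The step I expect to be most delicate is the $\inv$ bookkeeping in part (2): characterising the non-inversions of $\bar R$ correctly (which hinges on getting the reverse reading order right so that pairs of cells in $D\setminus D'$ compare as claimed), verifying the uniform attacking count $n-1$ where $\Rec^{\leq N}$ narrows in its bottom row, and lining up the two geometric types of attacking pairs with the two summands defining $\dinv$. The $\maj$ computation and part (1) are then essentially bookkeeping once these points are settled.
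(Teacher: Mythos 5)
Your proof is correct and follows essentially the same strategy as the paper's (compute the $\maj$ and $\inv$ ratios relative to $w_0$ and identify the surviving attacking pairs with $\dinv(\eta(R))$), but your bookkeeping of the power of $q$ is actually more careful than what appears in print. The paper writes $g^z_S(i+1,j)=z_{N+n-i}/z_j$ and asserts that each $c\in D'$ is attacked by $n$ cells, producing $q^{n|\lambda|}$ from attacking pairs alone; in fact, as you observe, for $S=\{n-k+1,\dots,n\}$ every cell of $\Rec^{\leq N}$ carries the filling $q\,z_{N+n+1-a}/z_c$ (the exceptional branch of $f^z_S$ never fires in the first $N$ rows), and each $c\in D'$ is attacked by exactly $n-1$ cells. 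The missing $q^{|\lambda|}$ is recovered from $\maj$, so the two compensating inaccuracies in the paper's terse account give the same net exponent $q^{-n|\lambda|}$; your split into $q^{-|\lambda|}$ from $\maj$ and $q^{-(n-1)|\lambda|+I}$ from $\inv$ is the precise version of the same computation. Your unpacking of $\gamma(R)=0$, the descent characterization, the non-inversion characterization in the four cases determined by membership of $u,v$ in $D'$, the uniform attacking count $n-1$, and the identification of $I$ with $\dinv(\eta(R))$ via the two geometric types of attacking pairs are all correct.
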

\begin{proof}
We denote $R:D\rightarrow \mathbb{Z}_{\geq 1}$ for some $D\subseteq \Rec^{\leq N}$ with $|D|=\lambda$.

(1) Let $L_i=\{(a_1,b_1)\prec_{\lex}\dots\prec_{\lex} (a_{\beta_i},b_{\beta_i})\}$. If $a_{\ell}+1<a_{\ell+1}$ then $(a_\ell,b_\ell)\prec_{p}(a_{\ell+1},b_{\ell+1})$. If $a_{\ell}+1=a_{\ell+1}$ and $b_{\ell}<b_{\ell+1}$ then the cell $(a_\ell-1,i)$ is bad. Therefore we have  $b_{\ell}\geq b_{\ell+1}$, thus $(a_\ell,b_\ell)\prec_{p}(a_{\ell+1},b_{\ell+1})$. We conclude $L_i\in \mathcal{P}_{\beta_i}$. If $i>n-k$, $i$-th column of $\Rec^{\leq N}$ does not have a cell in the first row, we have $L_i\in \hat{\mathbf{C}}_{\beta_i}$.

(2) For every $c=(i,j)\in D$, $(i+1,j)\notin \Des_{\Rec^{\leq N}}(\bar{R})$ and we have $g_{S}^{z}(i+1,j)=\frac{z_{N+n-i}}{z_j}$. There are $n$-many cells $u$ such that $(u,c)$ is an attacking pair, so total $n|\lambda|$ many attacking pairs will be lost. Extra contribution among attacking pairs within $R$ will be counted by $\dinv(\eta(R))$. We conclude 
\begin{equation*}
    \maj_{(\Rec^{\leq N},g^{z}_{S})}(\bar{R})=\frac{\maj_{(\Rec^{\leq N},g^{z}_{S})}(w_0)}{\frac{\prod_{i=1}^{N-1}z_{N+n-i}^{\alpha_i}}{\prod_{i=1}^{n}z_i^{\beta_i}}} \qquad \inv_{(\Rec^{\leq N})}(\bar{R})=\frac{\inv_{(\Rec^{\leq N})}(w_0)}{q^{n |\lambda|}}q^{\dinv{\eta(R)}}.
\end{equation*}
\end{proof}
Recall the operators $\mathfrak{h}_m$, $\bar{\mathfrak{h}}_{m}$ and $\hat{\mathfrak{h}}_{m}$ acting on $\mathbb{F}[\mathbf{y}]$, polynomial ring of $\mathbf{y}$ variables (Definition \ref{def: operator definition}). We let $\Phi$ be an evaluation map given by: $y_{i,j}\rightarrow z_{N+n-i-1}^{-1}x_j$ if $0\leq i\leq N-2$ and $y_{i,j}\rightarrow 0$ if $i>N-2$. Then by Lemma \ref{lem: lemmas for reduced assignments} we have
\begin{equation*}
    [\prod_{i=1}^{n}z_i^{\beta_i}]B=\frac{1}{q^{n|\lambda|}}\Phi \left(\prod_{i=1}^{n-k}\mathfrak{h}_{\beta_i}\prod_{i=n-k+1}^{n}\hat{\mathfrak{h}}_{\beta_i} \cdot 1 \right).
\end{equation*}
Note that the specialization $y_{i,j}\rightarrow 0$ if $i>N-2$ is because $\Rec^{\leq N}$ has $N$ rows and $R$ with $\gamma(R)=0$ is not allowed to assign a top row cell. By Lemma \ref{lem: key connection JT formula}, \eqref{eq: rec final2} equals
\begin{equation}\label{eq: eq rec final3}
    \frac{C_1}{q^{n|\lambda|}}\Phi\left(\det(W) \cdot 1 \right).
\end{equation}
where $W=W^{(s)}$ ($k$ is replaced by $n-k$) defined in \eqref{eq: Wr matrix formula}.

\begin{proof}[Proof of Theorem \ref{thm: main theorem} (c)] In the expansion $\det(W) \cdot 1$, a variable $y_{i,j}$ for $i>N-2$ does not appear as $N$ is sufficiently large. We may only consider the specialization $y_{i,j}\rightarrow z_{N+n-i-1}^{-1}x_j$ and again specializing with \eqref{eq: z_i values}, gives $y_{i,j}\rightarrow q^{n} t^{i}x_j$. Note that $\det(W) \cdot 1$ is of (homogenous) degree $|\lambda|$ in $\mathbf{y}$'s, comparing with \eqref{eq: lw formula final} and using Lemma \ref{lem: for the longest permutation} (3) we have 
\begin{equation*}
    \eqref{eq: eq rec final3} \xrightarrow[\text{Specialize by \eqref{eq: z_i values}}]{\text{}} \frac{T_{\delta^{[n]}} (q^{n})^{|\lambda|}}{q^{n|\lambda|}}\left(\det(W) \cdot 1 \bigg|_{y_{i,j} = t^i x_j}\right)=T_{\delta^{[n]}} (-1)^{\adj(\lambda)}\LW_{\lambda}.
\end{equation*}
Together with \eqref{eq: up toconstants}, the proof is complete.
\end{proof}

\section{Macdonald piece polynomial $\I_{\mu\,\lambda,k}$ at $q=t=1$}
\label{sec: q=t=1}
For a partition $\lambda$ such that  $\ell(\lambda)\leq k$, let $ s^{\flat}_{\lambda}(x_1,x_2,\dots,x_k):=s_{\lambda}(x_1-1,x_2-1,\dots,x_k-1)$. Then we have 
\begin{equation}\label{eq: schur flat expansion}
    s_{\lambda}(x_1,x_2,\dots,x_k)=\sum_{\nu\subseteq \lambda} 
    d^{(k)}_{\lambda,\nu}
     s_{\nu}(x_1-1,x_2-1,\dots,x_k-1)
\end{equation}
where $d^{(k)}_{\lambda,\mu}=\det\left(\binom{\lambda_i+k-i}{\mu_j+k-j}\right)_{1\leq i,j\leq k}$ \cite{Sta16}.
Note that $d^{(k)}_{\lambda,\mu}$ is a number of certain non-intersecting lattice paths by  Lindström–Gessel–Viennot lemma, therefore a nonnegative integer.
Lastly, we denote $[h_{\mu}](\nabla s_{\lambda}\vert_{q,t=1})$ by $W_{\lambda,\mu}$.

\begin{proof}[Proof of Theorem~\ref{thm: q=t=1}]
Consider
\[
    \bar{\I}_{\mu,\lambda,k}[X;q,t] := (-1)^{|\lambda|}\sum_{S \in \binom{[n]}{k}} \dfrac{s^{\flat}_\lambda(z_S) \prod_{j \in S^c}{z_j}}{\prod_{\substack{i \in S \\ j \in S^c}} (z_j - z_i)} \widetilde{H}_{\mu^S}[X]
\]
where $z_i=q^{-\coarm(c_i)}t^{-\coleg(c_i)}$. We will prove the following:
\begin{equation}\label{eq: bar I expansion}
    \bar{\I}_{\mu,\lambda,k}[X;1,1]=\sum_{\tau}W_{\tilde{\lambda},\tau}h_{(\tau+1^{|\mu|-k-|\tilde{\lambda}|})
    }.
\end{equation}

Expanding $s^{\flat}_\lambda(z_S)$ in terms of Schur functions as follows
\begin{equation*}
    s^{\flat}_\lambda(z_S)=\sum_{\nu\subseteq \lambda}d'_{\lambda,\nu}s_{\nu}(z_S),
\end{equation*}
we have
\begin{equation*}
    \bar{\I}_{\mu,\lambda,k}[X;q,t] =\sum_{\nu\subseteq \lambda}(-1)^{|\lambda|-|\nu|}d'_{\lambda,\nu}\I_{\mu,\nu,k}[X;q,t].
\end{equation*}
Note that $d'_{\lambda,\lambda}=1$. Therefore, by Theorem \ref{thm: main theorem} (a) and (b) we have
\begin{equation}\label{eq: something}
    e^\perp_N \left(\bar{\I}_{\mu,\lambda,k}[X;q,t]\right) = \begin{cases}
        0 &\text{ if } |\mu|-k-|\tilde{\lambda}|< N \\
        \nabla s_{\tilde{\lambda}} &\text{ if }|\mu|-k-|\tilde{\lambda}|= N.
    \end{cases}
\end{equation}
Together with the following elementary fact
\begin{equation*}
    e^\perp_N h_\nu = \begin{cases}
        0 &\text{ if } \ell(\nu) < N \\
        h_{\nu-(1^N)} &\text{ if } \ell(\nu) = N,
    \end{cases}
\end{equation*}
we conclude 
\begin{equation*}
        [h_{\nu}] \left(\bar{\I}_{\mu,\lambda,k}[X;1,1]\right) = \begin{cases}
        0 &\text{ if } \ell(\nu) >  |\mu|-k-|\tilde{\lambda}| \\
        W_{\tilde{\lambda},\nu'} &\text{ if } \ell(\nu) =  |\mu|-k-|\tilde{\lambda}|,
    \end{cases}
\end{equation*}
where $\nu'=\nu-(1^{ |\mu|-k-|\tilde{\lambda}|})$.

Now it suffices to show $[h_{\nu}] \left(\bar{\I}_{\mu,\lambda,k}[X;1,1]\right)=0$ for $\ell(\nu)<|\mu|-k-|\tilde{\lambda}|$. Recall that for a partition $\mu\vdash n$, we have,
\[
    \widetilde{H}_\mu[X;q,1]=\sum_{\nu\vdash n} (q-1)^{n-\ell(\nu)}A_{\mu,\nu}(q)h_\nu[X],
\]
for some $A_{\mu,\nu}(q)\in\mathbb{Z}[q]$ \cite[Proposition 1.1]{GHQR19}. From the definition of $\bar{\I}_{\mu,\lambda,k}[X;q,t]$, we have,
\[
    [h_\tau]\left(\bar{\I}_{\mu,\lambda,k}[X;q,1]\right)=(-1)^{|\lambda|}\sum_{S \in \binom{[n]}{k}} \dfrac{s^{\flat}_\lambda(z_{S}) \prod_{j \in S^c}{z_j}}{\prod_{\substack{i \in S \\ j \in S^c}} (z_j - z_i)} (q-1)^{|\mu|-k-\ell(\nu)}A_{\mu^{S},\tau}(q),
\]
where we abuse our notation to write $z_i=q^{-\coleg(c_i)}$. Note that
\begin{equation*}
    \dfrac{s^{\flat}_\lambda(z_{S}) \prod_{j \in S^c}{z_j}}{\prod_{\substack{i \in S \\ j \in S^c}} (z_j - z_i)}
\end{equation*}
has a pole of degree $k(n-k)-|\lambda|=|\tilde{\lambda}|$ at $q=1$. Since $|\mu|-k-\ell(\tau)>|\tilde{\lambda}|$, each term 
\[
    \dfrac{s^{\flat}_\lambda(z_{S}) \prod_{j \in S^c}{z_j}}{\prod_{\substack{i \in S \\ j \in S^c}} (z_j - z_i)} (q-1)^{|\mu|-\ell(\nu)}A_{\mu^{S},\tau}(q)
\]
vanishes at $q=1$.

By \eqref{eq: schur flat expansion} we have
 \begin{equation*} \I_{\mu,\lambda,k}[X;1,1] =\sum_{\nu\subseteq \lambda}(-1)^{|\lambda|-|\nu|}d^{(k)}_{\lambda,\nu}(\bar{\I}_{\mu,\nu,k}[X;1,1]).\end{equation*}
Now \eqref{eq: bar I expansion} completes the proof. 
\end{proof}

\section{Future Direction}
\label{sec: future questions}
We list some future questions from this work.
\begin{itemize}
\item We conjectured that \(\I_{\mu,\lambda,k}\) is signed Schur positive and \(\widetilde{H}^{\lambda}_{\mu^S}\) is Schur positive. Therefore, finding positive monomial expansion formulas (possibly of the HHL type) for these symmetric functions would be the next challenge. As we have seen that \(\widetilde{H}^{\lambda}_{\mu^S}\) refines the Macdonald positivity, finding a positive monomial expansion would be a significant step toward addressing Problem \ref{prob: Macdonald-Kostka}. Additionally, we may further attempt to extend Conjecture \ref{conj: sf extend}, by expressing not only the intersection but also the intersection together with the difference of Garsia--Haiman modules.

\item The arguments presented in Section \ref{sec: LW formula} remain valid when we change a poset $\mathbf{P}$ to any unit interval order. It would be interesting to find a generalization of $\I_{\mu,\lambda,k}$ that matches with the combinatorics of the generalized $\LW_{\lambda}$. 

\item The Macdonald polynomials have fascinating connections to the geometry of Hilbert schemes \cite{Hai01} and affine Springer fibers \cite{Mel20}. However, understanding our main results from a geometric perspective remains elusive. We hope that our findings provide new insights and contribute to the theory of Macdonald polynomials.

\end{itemize}

\appendix
\section{Data for the dimension of $\widetilde{H}^{\lambda}_{\mu^{S}}$}\label{sec: Appendix}
\label{Sec: Appendix}
Recall the symmetric function \(\widetilde{H}^{\lambda}_{\mu^{S}}\) defined in \eqref{eq: module intersection sym def}. Note that we can express \(\widetilde{H}^{\lambda}_{\mu^{S}}\) in terms of \(\I_{\mu,\nu,k}\)'s, and each \(\I_{\mu,\nu,k}\) has a fixed \(h\)-expansion formula (up to a multiplication by \(h_{(1^m)}\)) at \(q=t=1\) by Theorem \ref{thm: q=t=1}. Therefore, the 'relative dimension'
\begin{equation*}
    \RD(\lambda) := \frac{\langle \widetilde{H}^{\lambda}_{\mu^{S}}[X;1,1], e_{(1^{|\mu|-k})} \rangle}{(|\mu|-k)!}
\end{equation*}
is a well-defined number that only depends on \(\lambda\). Note that \(\RD((n)) = \frac{1}{n+1}\) as proved in \cite{BG99, KLO23}, which is also compatible with \eqref{eq: n!/k conjecture}. We provide experimental data for \(\RD(\lambda)\) for \(|\lambda| \leq 6\). According to Conjecture~\ref{conj: sf extend}, these numbers are the relative dimension of a certain intersection of Garsia--Haiman modules.

\begin{table}[h]
        \begin{tabular}{|c|c|c|c|c|c|c|c|c|c|c|c|}
        \hline
        $\lambda$ & $(1)$ & $(2)$ & $(1,1)$ & $(3)$ & $(2,1)$ & $(1,1,1)$ & $(4)$ &$(3,1)$ &$(2,2)$ & $(2,1,1)$ & $(1,1,1,1)$  \\\hline
        $\RD(\lambda)$ & $\frac{1}{2}$ & $\frac{1}{3}$ & $\frac{5}{12}$& $\frac{1}{4}$ &$\frac{1}{4}$ &$\frac{3}{8}$ &$\frac{1}{5}$ & $\frac{7}{40}$ &$\frac{17}{72}$ & $\frac{77}{360}$ &$\frac{251}{720}$\\\hline
        $\lambda$ & $(5)$ & $(4,1)$ & $(3,2)$ & $(3,1,1)$ & $(2,2,1)$ & $(2,1,1,1)$ & $(1,1,1,1,1)$ \\\cline{1-8}
        $\RD(\lambda)$ & $\frac{1}{6}$ & $\frac{2}{15}$ & $\frac{19}{120}$& $\frac{7}{48}$ &$\frac{47}{240}$ & $\frac{139}{720}$ &$\frac{95}{288}$ \\\cline{1-8}
        \end{tabular}
\end{table}

\begin{table}[h]
        \begin{tabular}{|c|c|c|c|c|c|c|c|c|c|c|c|}
        \hline
        $\lambda$ & $(6)$ & $(5,1)$ & $(4,2)$ & $(4,1,1)$ & $(3,3)$ & $(3,2,1)$ & $(3,1,1,1)$  \\\hline
        $\RD(\lambda)$ & $\frac{1}{7}$ & $\frac{3}{28}$ & $\frac{7}{60}$ & $\frac{23}{210}$ &$\frac{37}{240}$ &$\frac{1}{8}$ & $\frac{437}{3360}$ \\\hline
        $\lambda$ & $(2,2,2)$ & $(2,2,1,1)$ & $(2,1,1,1,1)$ & $(1,1,1,1,1,1)$  \\\cline{1-5}
        $\RD(\lambda)$ & $\frac{413}{2160}$ & $\frac{749}{4320}$ &$\frac{5419}{30240}$ &  $\frac{19087}{60480}$ \\\cline{1-5}
        \end{tabular}
\end{table}

For the final remarks, we display a series of observations, which we do not have a proof, according to the data above.
\begin{enumerate}
    \item (Pieri rule) For a partition $\lambda$, let $\lambda^+$ be the set of partitions obtained from $\lambda$ by adding a cell. Then we have
    \[
        \left(|\lambda^+|-\frac{1}{2}\right)\RD(\lambda) = \sum_{\mu \in \lambda^+} \RD(\mu).
    \]
    
    \item (one column shapes) We have
    \[
        \RD((1^{n-1})) = \dfrac{N_n}{H_n},
    \]
    where the numerator is the \emph{Nörlund number} given in \cite[\href{https://oeis.org/A002657}{A002657}]{Sloane}. and the denominator is the \emph{Hirzebruch number} given in \cite[\href{https://oeis.org/A091137}{A091137}]{Sloane}.
    \item (staircase shapes) Let $\text{st}_n=(n-1,n-2,\dots,1)$ be the staircase partition. Then we have
    \[
    \RD(\text{st}_n)=\dfrac{1}{2^n}.
    \]
    \item (sum of hooks) Let $\operatorname{Hook}_n$ be the set of hook shape partition of size $n$. Then we have
    \[
    \sum_{\lambda\in\operatorname{Hook}_n} \RD(\lambda ) = 1- \dfrac{1}{2^n}.
    \]
\end{enumerate}

\bibliographystyle{alpha}  
\bibliography{main.bib} 

\newcommand{\etalchar}[1]{$^{#1}$}
\begin{thebibliography}{HHL{\etalchar{+}}05b}

\bibitem[AB84]{AB84}
M.~F. Atiyah and R.~Bott.
\newblock The moment map and equivariant cohomology.
\newblock {\em Topology}, 23(1):1--28, 1984.

\bibitem[BG99]{BG99}
Francois Bergeron and Adriano Garsia.
\newblock Science fiction and {M}acdonald’s polynomials.
\newblock {\em CRM Proceedings \& Lecture Notes, American Mathematical Society}, 22:1--52, 1999.

\bibitem[BHM{\etalchar{+}}21]{BHMPS21LW}
Jonah Blasiak, Mark Haiman, Jennifer Morse, Anna Pun, and George Seelinger.
\newblock Dens, nests and the loehr-warrington conjecture.
\newblock {\em arXiv preprint arXiv:2112.07070}, 2021.

\bibitem[BHM{\etalchar{+}}23]{BHMPS21Delta}
Jonah Blasiak, Mark Haiman, Jennifer Morse, Anna Pun, and George~H. Seelinger.
\newblock A proof of the extended delta conjecture.
\newblock {\em Forum Math. Pi}, 11:Paper No. e6, 28, 2023.

\bibitem[BMPS19]{BMPS19}
Jonah Blasiak, Jennifer Morse, Anna Pun, and Daniel Summers.
\newblock Catalan functions and {$k$}-{S}chur positivity.
\newblock {\em J. Amer. Math. Soc.}, 32(4):921--963, 2019.

\bibitem[But94]{But94}
Lynne~M. Butler.
\newblock Subgroup lattices and symmetric functions.
\newblock {\em Mem. Amer. Math. Soc.}, 112(539):vi+160, 1994.

\bibitem[CM18]{CM18}
Erik Carlsson and Anton Mellit.
\newblock A proof of the shuffle conjecture.
\newblock {\em J. Amer. Math. Soc.}, 31(3):661--697, 2018.

\bibitem[DM22]{DM22}
Michele D'Adderio and Anton Mellit.
\newblock A proof of the compositional delta conjecture.
\newblock {\em Adv. Math.}, 402:Paper No. 108342, 17, 2022.

\bibitem[Gas96]{Gas96}
Vesselin Gasharov.
\newblock Incomparability graphs of {$(3+1)$}-free posets are {$s$}-positive.
\newblock In {\em Proceedings of the 6th {C}onference on {F}ormal {P}ower {S}eries and {A}lgebraic {C}ombinatorics ({N}ew {B}runswick, {NJ}, 1994)}, volume 157, pages 193--197, 1996.

\bibitem[Ges84]{Ges84}
Ira~M. Gessel.
\newblock Multipartite {$P$}-partitions and inner products of skew {S}chur functions.
\newblock In {\em Combinatorics and algebra ({B}oulder, {C}olo., 1983)}, volume~34 of {\em Contemp. Math.}, pages 289--317. Amer. Math. Soc., Providence, RI, 1984.

\bibitem[GH93]{GH93}
Adriano~M. Garsia and Mark Haiman.
\newblock A graded representation model for {M}acdonald's polynomials.
\newblock {\em Proc. Nat. Acad. Sci. U.S.A.}, 90(8):3607--3610, 1993.

\bibitem[GH96]{GH96}
A.~M. Garsia and M.~Haiman.
\newblock Some natural bigraded {$S_n$}-modules and {$q,t$}-{K}ostka coefficients.
\newblock volume~3, pages Research Paper 24, approx. 60. 1996.
\newblock The Foata Festschrift.

\bibitem[GH07]{GH07}
Ian Grojnowski and Mark Haiman.
\newblock Affine {H}ecke algebras and positivity of {LLT} and {M}acdonald polynomials.
\newblock {\em preprint}, 2007.

\bibitem[GHQR19]{GHQR19}
Adriano~M Garsia, James Haglund, Dun Qiu, and Marino Romero.
\newblock $ e $-positivity results and conjectures.
\newblock {\em arXiv preprint arXiv:1904.07912}, 2019.

\bibitem[GHT99]{GHT99}
A.~M. Garsia, M.~Haiman, and G.~Tesler.
\newblock Explicit plethystic formulas for {M}acdonald {$q,t$}-{K}ostka coefficients.
\newblock volume~42, pages Art. B42m, 45. 1999.
\newblock The Andrews Festschrift (Maratea, 1998).

\bibitem[GT96]{GT96}
A.~M. Garsia and G.~Tesler.
\newblock Plethystic formulas for {M}acdonald {$q,t$}-{K}ostka coefficients.
\newblock {\em Adv. Math.}, 123(2):144--222, 1996.

\bibitem[Hag04]{Hag04}
J.~Haglund.
\newblock A proof of the {$q,t$}-{S}chr\"oder conjecture.
\newblock {\em Int. Math. Res. Not.}, (11):525--560, 2004.

\bibitem[Hai01]{Hai01}
Mark Haiman.
\newblock Hilbert schemes, polygraphs and the {M}acdonald positivity conjecture.
\newblock {\em J. Amer. Math. Soc.}, 14(4):941--1006, 2001.

\bibitem[HHL05a]{HHL05}
J.~Haglund, M.~Haiman, and N.~Loehr.
\newblock A combinatorial formula for {M}acdonald polynomials.
\newblock {\em J. Amer. Math. Soc.}, 18(3):735--761, 2005.

\bibitem[HHL{\etalchar{+}}05b]{HHLRU05}
J.~Haglund, M.~Haiman, N.~Loehr, J.~B. Remmel, and A.~Ulyanov.
\newblock A combinatorial formula for the character of the diagonal coinvariants.
\newblock {\em Duke Math. J.}, 126(2):195--232, 2005.

\bibitem[HHL08]{HHL08}
J.~Haglund, M.~Haiman, and N.~Loehr.
\newblock A combinatorial formula for nonsymmetric {M}acdonald polynomials.
\newblock {\em Amer. J. Math.}, 130(2):359--383, 2008.

\bibitem[KLO22]{KLO22}
Donghyun Kim, Seung~Jin Lee, and Jaeseong Oh.
\newblock Toward {B}utler's conjecture.
\newblock {\em arXiv preprint arXiv:2212.09419}, 2022.

\bibitem[KLO23]{KLO23}
Donghyun Kim, Seung~Jin Lee, and Jaeseong Oh.
\newblock Shuffle formula in science fiction for macdonald polynomials.
\newblock {\em arXiv preprint arXiv:2306.14371}, 2023.

\bibitem[LLM03]{LLM03}
L.~Lapointe, A.~Lascoux, and J.~Morse.
\newblock Tableau atoms and a new {M}acdonald positivity conjecture.
\newblock {\em Duke Math. J.}, 116(1):103--146, 2003.

\bibitem[LW08]{LW08}
Nicholas~A. Loehr and Gregory~S. Warrington.
\newblock Nested quantum {D}yck paths and {$\nabla(s_\lambda)$}.
\newblock {\em Int. Math. Res. Not. IMRN}, (5):Art. ID rnm 157, 29, 2008.

\bibitem[Mac88]{Mac88}
Ian Macdonald.
\newblock A new class of symmetric functions.
\newblock {\em S{\'e}minaire Lotharingien de Combinatoire}, 20:B20a--41, 1988.

\bibitem[Mel20]{Mel20}
Anton Mellit.
\newblock Poincar\'e{} polynomials of character varieties, {M}acdonald polynomials and affine {S}pringer fibers.
\newblock {\em Ann. of Math. (2)}, 192(1):165--228, 2020.

\bibitem[Mel21]{Mel21}
Anton Mellit.
\newblock Toric braids and {$(m,n)$}-parking functions.
\newblock {\em Duke Math. J.}, 170(18):4123--4169, 2021.

\bibitem[{Slo}]{Sloane}
{Sloane Neil}.
\newblock The {O}n-{L}ine {E}ncyclopedia of {I}nteger {S}equences.
\newblock \url{http://oeis.org}.

\bibitem[Sta]{Sta16}
Richard Stanley.
\newblock Plugging $1-x$ into schur polynomials.
\newblock MathOverflow.
\newblock URL:https://mathoverflow.net/q/228999 (version: 2016-01-22).

\bibitem[SW16]{SW16}
John Shareshian and Michelle~L. Wachs.
\newblock Chromatic quasisymmetric functions.
\newblock {\em Adv. Math.}, 295:497--551, 2016.

\end{thebibliography}

\end{document}